\newtheorem{theorem}{Theorem}
\newtheorem{definition}[theorem]{Definition}
\newtheorem{corollary}[theorem]{Corollary}
\newenvironment{proof}{\small{\bf Proof.}}%
  {\hfill$\Box$\normalsize\bigskip}
\newtheorem{proposition}[theorem]{Proposition}
\newcommand{\eqsepv}{\; , \enspace}       
\newcommand{\eqfinv}{\; ,}                
\newcommand{\eqfinp}{\; .}
\renewcommand{\bar}{\overline}
\newcommand{\mtext}[1]{\,\mbox{#1}\,} %text in maths
\newcommand{\module}[1]{| #1 |}
\newcommand{\norm}[1]{\|#1\|}
\newcommand{\sequence}[2]{\left\{#1\right\}_{#2}}           % Suite
\newcommand{\proscal}[2]{\left\langle#1\:,#2\right\rangle}  % Produit scalaire
\newcommand{\np}[1]{(#1)}                                   % Parenth\`{e}se normal
\newcommand{\bp}[1]{\big(#1\big)}                           % Parenth\`{e}se big
\newcommand{\Bp}[1]{\Big(#1\Big)}                           % Parenth\`{e}se Big
\newcommand{\bgp}[1]{\bigg(#1\bigg)}                        % Parenth\`{e}se bigg
\newcommand{\Bc}[1]{\Big[#1\Big]}                           % Crochet Big
\newcommand{\na}[1]{\{#1\}}                                 % Accolade normal
\newcommand{\ba}[1]{\big\{#1\big\}}                         % Accolade big
\newcommand{\RR}{{\mathbb R}} %"ensemble des reels"   
\newcommand{\NN}{{\mathbb N}} %"ensemble des entiers naturels"  
\newcommand{\AAA}{{\mathbb A}} 
\newcommand{\BB}{{\mathbb B}} 
\newcommand{\defset}[2]{\left\{#1\:\left|\:#2\right.\right\}}
\newcommand{\uncertain}{w}
\newcommand{\Uncertain}{W}
\newcommand{\UNCERTAIN}{{\mathbb W}}
\newcommand{\scenario}{s}
\newcommand{\Scenario}{S}
\newcommand{\SCENARIO}{\mathbb{S}}
\def\stackops#1#2#3{%
  \mathrel{\vbox{\offinterlineskip\ialign{%
    \hfil##\hfil\cr
    $#1$\cr
    \noalign{\kern#3}
    $#2$\cr}}}}
\def\plusdot{\stackops{\cdot}{+}{-2.5ex}}
\newcommand{\LowPlus}{\plusdot}  
\newcommand{\UppPlus}{\dotplus}
\newcommand{\HILBERT}{{\mathbb V}}
\newcommand{\Hilbert}{V}
\newcommand{\hilbert}{v}
\newcommand{\PRIMAL}{{\mathbb X}}
\newcommand{\Primal}{X}
\newcommand{\primal}{x}
\newcommand{\DUAL}{{\mathbb Y}}
\newcommand{\Dual}{Y}
\newcommand{\dual}{y}
\newcommand{\ExtendedReals}{[-\infty,+\infty]}
\newcommand{\barRR}{\overline{\mathbb R}}
\newcommand{\Convex}{C}
\newcommand{\cardinal}[1]{\module{#1}}
\newcommand{\dom}{\mathrm{dom}}
\newcommand{\barriercone}{\mathrm{bar}}
\newcommand{\closedspan}{\overline{\mathrm{span}}}
\newcommand{\conicalhull}{\mathrm{cone}}
\newcommand{\continuitypoints}{\mathrm{cont}}
\newcommand{\convexhull}{\mathrm{co}}
\newcommand{\closedconvexhull}{\overline{\mathrm{co}}}
\newcommand{\interior}{\mathrm{int}}
\newcommand{\linearspan}{\mathrm{span}}
\newcommand{\fonctionprimal}{f} 
\newcommand{\fonctiondual}{g} 
\newcommand{\fonctionprimalbis}{h} 
\newcommand{\fonctionuncertain}{h} 
\newcommand{\InfimalPostcomposition}[2]{#1\rhd#2} %{\tilde{#1}\cd{#2}}
\newcommand{\lzero}{\ell_0}
\newcommand{\pseudonormlzero}{$l_0$~pseudonorm}%{pseudonorm~$l_0$}
\newcommand{\LevelSet}[2]{#1^{\leq #2}}
\newcommand{\coupling}{c}
\newcommand{\Capra}{Caprac} 
\newcommand{\couplingCAPRA}{\cent} %{\lozenge} %{\complement}
\newcommand{\LFM}[1]{#1^{\star}}
\newcommand{\LFMr}[1]{#1^{\star'}}
\newcommand{\LFMbi}[1]{#1^{\star\star'}}
\newcommand{\minusLFM}[1]{#1^{(-\star)}}
\newcommand{\minusLFMr}[1]{#1^{(-\star)'}}
\newcommand{\minusLFMbi}[1]{#1^{(-\star)(-\star)'}}
\newcommand{\SFM}[2]{#1^{#2}}
\newcommand{\SFMbi}[2]{#1^{#2{#2}'}}
\newcommand{\triplenorm}[1]{||| #1 |||}
\newcommand{\SymmetricGaugeNorm}[2]{\norm{#2}_{(#1)}^{\mathrm{sgn}}}
\newcommand{\SupportNorm}[2]{\norm{#2}_{(#1)}^{\mathrm{sn}}}
\newcommand{\Support}[1]{\mathrm{supp}\np{#1}}
\newcommand{\SPHERE}{S} %{S(0,1)}
\newcommand{\BALL}{B}
\newcommand{\normalized}{n}
\newcommand{\matrice}{A}
\newcommand{\Subspace}{\HILBERT}%{\Primal}
\newcommand{\scenariobis}{s'}
\renewcommand{\scenario}{j}
\renewcommand{\scenariobis}{j'}
\renewcommand{\Scenario}{J}
\renewcommand{\SCENARIO}{\mathbb{J}}
\title{Lower Bound Convex Programs for\\ 
Exact Sparse Optimization}
\author{Jean-Philippe Chancelier and 
Michel De Lara, \\ CERMICS, \'Ecole des Ponts ParisTech}
\begin{document}

\maketitle

\begin{abstract}
In exact sparse optimization problems on~$\RR^d$
(also known as sparsity constrained problems), 
one looks for solution that have few nonzero components. 
In this paper, we consider problems where sparsity is exactly measured either by
the nonconvex \pseudonormlzero\ (and not by substitute penalty terms)
or by the belonging of the solution to a finite union of subsets.
Due to the combinatorial nature of the sparsity constraint,
such problems do not generally display convexity properties, 
even if the criterion to minimize is convex. 
In the most common approach to tackle them, one replaces the sparsity constraint
by a convex penalty term, supposed to induce sparsity.
Thus doing, one loses the original exact sparse optimization problem,
but gains convexity. 
However, by doing so, it is not clear that one obtains a lower bound
of the original exact sparse optimization problem.
In this paper, 
we propose another approach, where we lose convexity but where we gain at keeping
 the original exact sparse optimization formulation,
by displaying lower bound convex minimization programs.
For this purpose, we introduce suitable conjugacies, 
induced by a novel class of one-sided linear couplings.
Thus equipped, we present a systematic way to design norms
and lower bound convex minimization programs over their unit ball. 
The family of norms that we display encompasses 
most of the sparsity inducing norms used in machine learning.
Therefore, our approach provides foundation and interpretation
for their use.
\end{abstract}

{{\bf Key words}: sparse optimization,  \pseudonormlzero, 
sparsity inducing norm, machine learning, Fenchel-Moreau conjugacy.}

% \pagebreak
% \tableofcontents
% \pagebreak

\section{Introduction}

In exact sparse optimization problems on~$\RR^d$
(also known as sparsity constrained problems), 
one looks for solution that have few nonzero components. 
The \emph{counting function}, also called \emph{cardinality function}
or \emph{\pseudonormlzero}, 
counts the number of nonzero components of a vector in~$\RR^d$.
It is well-known that the \pseudonormlzero\ is 
lower semi continuous but is not convex.
As a consequence, a minimization problem under the constraint that
the \pseudonormlzero\ is less than a given integer 
is not convex in general. 
Then, it is common practice to \emph{replace} the nonconvex sparsity constraint
\emph{by substitute (convex) penalty terms}, supposed to induce sparsity.
By doing so, on the one hand, one gains convexity
and benefits of duality tools with the Fenchel conjugacy.
However, on the other hand, it is not clear that one obtains a lower bound
of the original exact sparse optimization problem.

In this paper, we consider exact sparse optimization problems,
that is, problems with combinatorial sparsity constraint.
More precisely, we focus on
problems where \emph{sparsity is exactly measured} either by
the nonconvex \pseudonormlzero\ (and not by substitute penalty terms)
or by the belonging of the solution to a finite union of given subsets.
Our main contribution is to provide a systematic way to 
design norms, and associated convex programs 
that are lower bounds for the original 
exact sparse optimization problem.

The paper is organized as follows.
In Sect.~\ref{One-sided_linear_couplings_and_lower_bound_convex_programs},
we recall the definition
and properties of so-called one-sided linear couplings,
introduced in the companion paper \cite{Chancelier-DeLara:2019b},
and we show how to use them to obtain 
concave maximization/convex minimization problems
that are lower bounds of a given optimization problem.
In Sect.~\ref{Dual_problems_for_exact_sparse_optimization_problems},
we consider minimization problems under the constraint that
the \pseudonormlzero\ is less than a given integer.
To provide a lower bound, we make use of a suitable conjugacy
(not the Fenchel one) induced by the so-called coupling \Capra,
introduced in~\cite{Chancelier-DeLara:2019b}.
We obtain a concave maximization program as lower bound and,
under a mild assumption, it coincides with a convex minimization program
on the unit ball of the so-called $k$-support norm. 
In Sect.~\ref{One-sided_convex_couplings_and_generalized_sparse_optimization},
we consider generalized exact sparse optimization problems.
These are minimization problems under the constraint that
the solution belongs to a finite union of given subsets.
We present a systematic way to design norms
and lower bound convex minimization programs over their unit ball.

\section{One-sided linear couplings and lower bound convex programs}
\label{One-sided_linear_couplings_and_lower_bound_convex_programs}

In~\S\ref{One-sided_linear_couplings}, we recall the definition
and properties of \emph{one-sided linear couplings}.
In~\S\ref{Lower_bound_convex_programs},
we show how to use them to obtain 
concave maximization/convex minimization problems
that are lower bounds of a given optimization problem.

\subsection{One-sided linear couplings and conjugacies}
\label{One-sided_linear_couplings}

The material here is mostly taken from~\cite{Chancelier-DeLara:2019b}.
Basic recalls and notations used in analysis can be found
in~\S\ref{Background_on_sets_and_functions}.

\subsubsection{Background on couplings and conjugacies}

We review general concepts and notations, then we focus 
on the special case of the Fenchel conjugacy.
We denote \( \barRR=\ExtendedReals \). 
Background on J.~J. Moreau lower and upper additions can be found 
in~\S\ref{Moreau_lower_and_upper_additions}.

\subsubsection*{The general case}

Let be given two sets $\PRIMAL$ (``primal''), $\DUAL$ (``dual''), together 
with a \emph{coupling} function
\begin{equation}
  \coupling : \PRIMAL \times \DUAL \to \barRR %=\ExtendedReals 
\eqfinp 
\end{equation}

With any coupling, we associate \emph{conjugacies} 
from \( \barRR^\PRIMAL \) to \( \barRR^\DUAL \) 
and from \( \barRR^\DUAL \) to \( \barRR^\PRIMAL \) 
as follows.

\begin{definition}
  The \emph{$\coupling$-Fenchel-Moreau conjugate} of a 
  function \( \fonctionprimal : \PRIMAL  \to \barRR \), 
  with respect to the coupling~$\coupling$, is
  the function \( \SFM{\fonctionprimal}{\coupling} : \DUAL  \to \barRR \) 
  defined by
  \begin{equation}
    \SFM{\fonctionprimal}{\coupling}\np{\dual} = 
    \sup_{\primal \in \PRIMAL} \Bp{ \coupling\np{\primal,\dual} 
      \LowPlus \bp{ -\fonctionprimal\np{\primal} } } 
    \eqsepv \forall \dual \in \DUAL
    \eqfinp
    \label{eq:Fenchel-Moreau_conjugate}
  \end{equation}
%  \begin{subequations}
With the coupling $\coupling$, we associate 
the \emph{reverse coupling~$\coupling'$} defined by 
\begin{equation}
  \coupling': \DUAL \times \PRIMAL \to \barRR 
\eqsepv
\coupling'\np{\dual,\primal}= \coupling\np{\primal,\dual} 
\eqsepv
\forall \np{\dual,\primal} \in \DUAL \times \PRIMAL
\eqfinp
    \label{eq:reverse_coupling}
\end{equation}
% $\coupling': \DUAL \times \PRIMAL \to \barRR$ 
% defined by 
%   $\coupling'\np{\dual,\primal}= \coupling\np{\primal,\dual}$.
  The \emph{$\coupling'$-Fenchel-Moreau conjugate} of a 
  function \( \fonctiondual : \DUAL \to \barRR \), 
  with respect to the coupling~$\coupling'$, is
  the function \( \SFM{\fonctiondual}{\coupling'} : \PRIMAL \to \barRR \) 
  defined by
  \begin{equation}
    \SFM{\fonctiondual}{\coupling'}\np{\primal} = 
    \sup_{ \dual \in \DUAL } \Bp{ \coupling\np{\primal,\dual} 
      \LowPlus \bp{ -\fonctiondual\np{\dual} } } 
    \eqsepv \forall \primal \in \PRIMAL 
    \eqfinp
    \label{eq:Fenchel-Moreau_reverse_conjugate}
  \end{equation}
  The \emph{$\coupling$-Fenchel-Moreau biconjugate} of a 
  function \( \fonctionprimal : \PRIMAL  \to \barRR \), 
  with respect to the coupling~$\coupling$, is
  the function \( \SFMbi{\fonctionprimal}{\coupling} : \PRIMAL \to \barRR \) 
  defined by
  \begin{equation}
    \SFMbi{\fonctionprimal}{\coupling}\np{\primal} = 
    \bp{\SFM{\fonctionprimal}{\coupling}}^{\coupling'} \np{\primal} = 
    \sup_{ \dual \in \DUAL } \Bp{ \coupling\np{\primal,\dual} 
      \LowPlus \bp{ -\SFM{\fonctionprimal}{\coupling}\np{\dual} } } 
    \eqsepv \forall \primal \in \PRIMAL 
    \eqfinp
    \label{eq:Fenchel-Moreau_biconjugate}
  \end{equation}
%    
%  \end{subequations}
%
\end{definition}

  \begin{subequations}
For any coupling~$\coupling$, 
\begin{itemize}
\item 
the biconjugate of a 
  function \( \fonctionprimal : \PRIMAL  \to \barRR \) satisfies
\begin{equation}
  \SFMbi{\fonctionprimal}{\coupling}\np{\primal}
  \leq \fonctionprimal\np{\primal}
  \eqsepv \forall \primal \in \PRIMAL 
  \eqfinv
  \label{eq:galois-cor}
\end{equation}
\item 
for any couple of functions \( \fonctionprimal : \PRIMAL  \to \barRR \) 
and \( \fonctionprimalbis : \PRIMAL \to \barRR \), we have the inequality
\begin{equation}
     \sup_{\dual \in \DUAL} 
    \Bp{ 
\bp{-\SFM{\fonctionprimal}{\coupling}\np{\dual} } 
      \LowPlus 
\bp{-\SFM{\fonctionprimalbis}{-\coupling}\np{\dual} }
}
    \leq 
    \inf_{\primal \in \PRIMAL} 
\Bp{ 
\fonctionprimal\np{\primal} 
      \UppPlus 
\fonctionprimalbis\np{\primal} 
} 
\eqfinv
  \label{eq:dual_problem_inequality}
\end{equation}
where the \emph{$\np{-\coupling}$-Fenchel-Moreau conjugate} is given by
  \begin{equation}
    \SFM{\fonctionprimalbis}{-\coupling}\np{\dual} = 
    \sup_{\primal \in \PRIMAL} \Bp{ \bp{ -\coupling\np{\primal,\dual} }
      \LowPlus \bp{ -\fonctionprimalbis\np{\primal} } } 
    \eqsepv \forall \dual \in \DUAL
    \eqfinv
    \label{eq:minus_Fenchel-Moreau_conjugate}
  \end{equation}
\item 
for any function \( \fonctionprimal : \PRIMAL  \to \barRR \) 
and subset \( \Primal \subset \PRIMAL \), we have the inequality
% In the special case of optimization with constraints, we obtain that 
\begin{equation}
      \sup_{\dual \in \DUAL} 
    \Bp{ 
\bp{-\SFM{\fonctionprimal}{\coupling}\np{\dual} } 
      \LowPlus 
\bp{-\SFM{\delta_{\Primal}}{-\coupling}\np{\dual} }
}
    \leq 
    \inf_{\primal \in \PRIMAL} 
\Bp{ 
\fonctionprimal\np{\primal} 
      \UppPlus 
\delta_{\Primal} \np{\primal} } 
= \inf_{\primal \in \Primal} \fonctionprimal\np{\primal} 
\eqfinp
  \label{eq:dual_problem_inequality_constraints}
\end{equation}
\end{itemize}
  \end{subequations}

\subsubsection*{The Fenchel conjugacy}

When the sets $\PRIMAL$ and $\DUAL$ are vector spaces 
equipped with a bilinear form \( \proscal{}{} \),
the corresponding conjugacy is the classical 
\emph{Fenchel conjugacy}.
For any functions \( \fonctionprimal : \PRIMAL  \to \barRR \)
and \( \fonctiondual : \DUAL \to \barRR \), we denote
\begin{subequations}
\begin{align}
    \LFM{\fonctionprimal}\np{\dual} 
&= 
    \sup_{\primal \in \PRIMAL} \Bp{ \proscal{\primal}{\dual} 
      \LowPlus \bp{ -\fonctionprimal\np{\primal} } } 
    \eqsepv \forall \dual \in \DUAL
    \eqfinv
    \label{eq:Fenchel_conjugate}
  \\
    \LFMr{\fonctiondual}\np{\primal} 
&= 
    \sup_{ \dual \in \DUAL } \Bp{ \proscal{\primal}{\dual} 
      \LowPlus \bp{ -\fonctiondual\np{\dual} } } 
    \eqsepv \forall \primal \in \PRIMAL 
    \label{eq:Fenchel_conjugate_reverse}
\\
    \LFMbi{\fonctionprimal}\np{\primal} 
&= 
    \sup_{\dual \in \DUAL} \Bp{ \proscal{\primal}{\dual} 
      \LowPlus \bp{ -\LFM{\fonctionprimal}\np{\dual} } } 
    \eqsepv \forall \primal \in \PRIMAL
    \eqfinp
    \label{eq:Fenchel_biconjugate}
\end{align}
\end{subequations}
Due to the presence of the coupling \( \np{-\coupling} \) 
in the Inequality~\eqref{eq:dual_problem_inequality},
we also introduce\footnote{% 
In convex analysis, one does not use the notations below, but rather uses 
\( \fonctionprimal^{\lor}\np{\primal} =\fonctionprimal\np{-\primal} \), 
for all \( \primal \in \PRIMAL \), 
and
\( \fonctiondual^{\lor}\np{\dual}=\fonctiondual\np{-\dual} \), 
for all \( \dual \in \DUAL \).
The connection between both notations is given by 
\(  \minusLFM{\fonctionprimal} =
\LFM{ \bp{\fonctionprimal^{\lor}} } = 
\bp{\LFM{\fonctionprimal}}^{\lor} \).
}
\begin{subequations}
\begin{align}
    \minusLFM{\fonctionprimal}\np{\dual} 
&= 
    \sup_{\primal \in \PRIMAL} \Bp{ -\proscal{\primal}{\dual} 
      \LowPlus \bp{ -\fonctionprimal\np{\primal} } } 
= \LFM{\fonctionprimal}\np{-\dual} 
    \eqsepv \forall \dual \in \DUAL
    \eqfinv
    \label{eq:minusFenchel_conjugate}
  \\
    \minusLFMr{\fonctiondual}\np{\primal} 
&= 
    \sup_{ \dual \in \DUAL } \Bp{ -\proscal{\primal}{\dual} 
      \LowPlus \bp{ -\fonctiondual\np{\dual} } } 
= \LFMr{\fonctiondual}\np{-\primal} 
    \eqsepv \forall \primal \in \PRIMAL 
    \label{eq:minusFenchel_conjugate_reverse}
\\
    \minusLFMbi{\fonctionprimal}\np{\primal} 
&= 
    \sup_{\dual \in \DUAL} \Bp{ -\proscal{\primal}{\dual} 
      \LowPlus \bp{ -\minusLFM{\fonctionprimal}\np{\dual} } } 
=\LFMbi{\fonctionprimal}\np{\primal}
    \eqsepv \forall \primal \in \PRIMAL
    \eqfinp
    \label{eq:minusFenchel_biconjugate}
\end{align}
\end{subequations}

% In convex analysis, one does not use the notations above, but rather uses 
% \( \fonctionprimal^{\lor}\np{\primal} =\fonctionprimal\np{-\primal} \), 
% for all \( \primal \in \PRIMAL \), 
% and
% \( \fonctiondual^{\lor}\np{\dual}=\fonctiondual\np{-\dual} \), 
% for all \( \dual \in \DUAL \).
% The connection between both notations is given by 
% \(  \minusLFM{\fonctionprimal} =
% \LFM{ \bp{\fonctionprimal^{\lor}} } = 
% \bp{\LFM{\fonctionprimal}}^{\lor} \).

% \begin{subequations}
% %
% \begin{align*}
% \fonctionprimal^{\lor}\np{\primal}
% &=
% \fonctionprimal\np{-\primal} \eqsepv \forall \primal \in \PRIMAL
% %\qquad\mtext{ for } \fonctionprimal : \PRIMAL  \to \barRR 
%     \eqfinv
% \\
% \fonctiondual^{\lor}\np{\dual}
% &=
% \fonctiondual\np{-\dual}    \eqsepv \forall \dual \in \DUAL
% %\qquad\mtext{ for } \fonctiondual : \DUAL \to \barRR 
%     \eqfinp
% \end{align*}
% %  
% \end{subequations}
% The connection between both notations is given by 
% \begin{equation*}
%   \minusLFM{\fonctionprimal} =
% \LFM{ \bp{\fonctionprimal^{\lor}} } = 
% \bp{\LFM{\fonctionprimal}}^{\lor} 
% \eqfinp 
% \end{equation*}

When the two vector spaces $\PRIMAL$ and $\DUAL$ are \emph{paired}
in the sense of convex analysis\footnote{That is,
$\PRIMAL$ and $\DUAL$ are equipped with 
a bilinear form \( \proscal{}{} \), 
and locally convex topologies
that are compatible in the sense
that the continuous linear forms on~$\PRIMAL$
are the functions 
\( \primal \in \PRIMAL \mapsto \proscal{\primal}{\dual} \),
for all \( \dual \in \DUAL \),
and 
that the continuous linear forms on~$\DUAL$
are the functions 
\( \dual \in \DUAL \mapsto \proscal{\primal}{\dual} \),
for all \( \primal \in \PRIMAL \)},
Fenchel conjugates are convex 
\emph{lower semi continuous (lsc)} functions,
and their opposites are concave
\emph{upper semi continuous (usc)} functions.

\subsubsection{One-sided linear couplings and conjugacies}
%\label{One-sided_linear_couplings}

In the companion paper \cite{Chancelier-DeLara:2019b}, 
we have introduced and studied a novel
family of couplings defined as follows. 

Let $\UNCERTAIN$ and $\PRIMAL$ be two sets and
\( \theta: \UNCERTAIN \to \PRIMAL \) be a mapping.
We recall the definition \cite[p.~214]{Bauschke-Combettes:2017}
of the \emph{infimal postcomposition} 
\( \bp{\InfimalPostcomposition{\theta}{\fonctionuncertain}}:
\PRIMAL \to \barRR \) of 
a function \( \fonctionuncertain : \UNCERTAIN \to \barRR \): 
  \begin{equation}
\bp{\InfimalPostcomposition{\theta}{\fonctionuncertain}}\np{\primal}
=
\inf\defset{\fonctionuncertain\np{\uncertain}}{%
\uncertain\in\UNCERTAIN \eqsepv \theta\np{\uncertain}=\primal}
\eqsepv \forall \primal \in \PRIMAL 
  \eqfinv
\label{eq:InfimalPostcomposition}
  \end{equation}
with the convention that \( \inf \emptyset = +\infty \)
(and with the consequence that 
\( \theta: \UNCERTAIN \to \PRIMAL \) need not be defined
on all~$\UNCERTAIN$, but only on
the effective domain~\( \dom\fonctionuncertain = 
\defset{\uncertain\in\UNCERTAIN}{\fonctionuncertain\np{\uncertain}<+\infty} \) of the 
function \( \fonctionuncertain : \UNCERTAIN \to \barRR \)).

\begin{definition}
Let $\PRIMAL$ and $\DUAL$ be two vector spaces 
equipped with a bilinear form \( \proscal{}{} \).
Let $\UNCERTAIN$ be a set and
\( \theta: \UNCERTAIN \to \PRIMAL \) a mapping.
  We define the \emph{one-sided linear coupling} $\coupling_{\theta}$
between $\UNCERTAIN$ and $\DUAL$ by
\begin{equation}
\coupling_{\theta} : \UNCERTAIN \times \DUAL \to \barRR
\eqsepv 
\coupling_{\theta}\np{\uncertain, \dual} = 
\proscal{\theta\np{\uncertain}}{\dual} 
\eqsepv \forall \uncertain \in \UNCERTAIN
\eqsepv \forall \dual \in \DUAL
\eqfinp 
\label{eq:one-sided_linear_coupling}
\end{equation}
\end{definition}

Here are expressions for the $\coupling_{\theta}$-conjugates 
in function of the Fenchel conjugate.
%and biconjugates of a function. 
The proof of Proposition~\ref{pr:one-sided_linear_Fenchel-Moreau_conjugate}
can be found in \cite{Chancelier-DeLara:2019b}.

\begin{subequations}
\begin{proposition}
  For any function \( \fonctiondual : \DUAL \to \barRR \), 
  the $\coupling_{\theta}'$-Fenchel-Moreau conjugate is given by 
  \begin{equation}
\SFM{\fonctiondual}{\coupling_{\theta}'}=
 \LFM{ \fonctiondual } \circ \theta
\eqfinp
\label{eq:one-sided_linear_c'-Fenchel-Moreau_conjugate}
\end{equation}
  For any function \( \fonctionuncertain : \UNCERTAIN \to \barRR \), 
  the $\coupling_{\theta}$-Fenchel-Moreau conjugate is given by 
   \begin{equation}
\SFM{\fonctionuncertain}{\coupling_{\theta}}=
 \LFM{ \bp{\InfimalPostcomposition{\theta}{\fonctionuncertain}} }
\eqfinv 
\label{eq:one-sided_linear_Fenchel-Moreau_conjugate}
  \end{equation}
and the $\coupling_{\theta}$-Fenchel-Moreau biconjugate 
is given by
\begin{equation}
  \SFMbi{\fonctionuncertain}{\coupling_{\theta}}
= 
\SFM{ \bp{ \SFM{\fonctionuncertain}{\coupling_{\theta}} } }{\star} 
\circ \theta
=
 \LFMbi{ \bp{\InfimalPostcomposition{\theta}{\fonctionuncertain}} }
\circ \theta
\eqfinp
\label{eq:one-sided_linear_Fenchel-Moreau_biconjugate}
\end{equation}
For any subset \( \Uncertain \subset \UNCERTAIN \), 
the $\np{-\coupling_{\theta}}$-Fenchel-Moreau conjugate 
of the characteristic function of~$\Uncertain$ 
is given by the following support function
\begin{equation}
%   \InfimalPostcomposition{\theta}{\delta_{\Uncertain}} =
% \delta_{ \theta\np{\Uncertain} } 
% \mtext{ and }
\SFM{ \delta_{\Uncertain} }{-\coupling_{\theta}}
= \sigma_{ -\theta\np{\Uncertain} } 
%= \sigma_{ \closedconvexhull\bp{ \theta\np{\Uncertain} } } 
\eqfinp
\label{eq:one-sided_linear_Fenchel-Moreau_characteristic}
\end{equation}
  \label{pr:one-sided_linear_Fenchel-Moreau_conjugate}
\end{proposition}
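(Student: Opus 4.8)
The plan is to obtain each of the four formulas by substituting the explicit form~\eqref{eq:one-sided_linear_coupling} of the coupling $\coupling_{\theta}$ into the abstract conjugacy definitions \eqref{eq:Fenchel-Moreau_conjugate}, \eqref{eq:Fenchel-Moreau_reverse_conjugate}, \eqref{eq:minus_Fenchel-Moreau_conjugate} and then recognising Fenchel conjugates~\eqref{eq:Fenchel_conjugate}--\eqref{eq:Fenchel_biconjugate}. I would begin with the reverse formula~\eqref{eq:one-sided_linear_c'-Fenchel-Moreau_conjugate}, which is pure substitution: for $\fonctiondual : \DUAL \to \barRR$ and $\uncertain \in \UNCERTAIN$, definition~\eqref{eq:Fenchel-Moreau_reverse_conjugate} together with $\coupling_{\theta}\np{\uncertain,\dual} = \proscal{\theta\np{\uncertain}}{\dual}$ gives $\SFM{\fonctiondual}{\coupling_{\theta}'}\np{\uncertain} = \sup_{\dual \in \DUAL}\Bp{\proscal{\theta\np{\uncertain}}{\dual} \LowPlus \bp{-\fonctiondual\np{\dual}}}$, and the right-hand side is, by the reverse Fenchel conjugate~\eqref{eq:Fenchel_conjugate_reverse} evaluated at the point $\theta\np{\uncertain} \in \PRIMAL$, exactly $\LFM{\fonctiondual}\bp{\theta\np{\uncertain}}$.

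The heart of the argument is the conjugate formula~\eqref{eq:one-sided_linear_Fenchel-Moreau_conjugate}. Starting from~\eqref{eq:Fenchel-Moreau_conjugate}, I would decompose the supremum over $\uncertain \in \UNCERTAIN$ along the fibers of the mapping $\theta$: because the nonempty level sets $\defset{\uncertain\in\UNCERTAIN}{\theta\np{\uncertain}=\primal}$, for $\primal$ ranging over the image $\theta\np{\UNCERTAIN}$, partition $\UNCERTAIN$, one has $\sup_{\uncertain\in\UNCERTAIN} = \sup_{\primal\in\PRIMAL}\sup_{\uncertain\,:\,\theta\np{\uncertain}=\primal}$, where the inner supremum equals $-\infty$ off the image. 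On each fiber the coupling term $\coupling_{\theta}\np{\uncertain,\dual}=\proscal{\theta\np{\uncertain}}{\dual}=\proscal{\primal}{\dual}$ is constant, so I want to pull it out of the inner supremum. The single delicate point, which I expect to be the main obstacle, is precisely the distributivity of the Moreau lower addition over suprema, namely $\proscal{\primal}{\dual} \LowPlus \sup_{\uncertain}\bp{-\fonctionuncertain\np{\uncertain}} = \sup_{\uncertain}\Bp{\proscal{\primal}{\dual} \LowPlus \bp{-\fonctionuncertain\np{\uncertain}}}$: this holds because the lower addition resolves the $\np{+\infty}\LowPlus\np{-\infty}$ ambiguity in favour of $-\infty$, and this is where the conventions recalled in~\S\ref{Moreau_lower_and_upper_additions} are needed. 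Identifying $\sup_{\uncertain\,:\,\theta\np{\uncertain}=\primal}\bp{-\fonctionuncertain\np{\uncertain}} = -\bp{\InfimalPostcomposition{\theta}{\fonctionuncertain}}\np{\primal}$ from~\eqref{eq:InfimalPostcomposition}, the remaining supremum over $\primal$ is the Fenchel conjugate~\eqref{eq:Fenchel_conjugate} of $\InfimalPostcomposition{\theta}{\fonctionuncertain}$, which yields~\eqref{eq:one-sided_linear_Fenchel-Moreau_conjugate}; the convention $\inf\emptyset=+\infty$ makes the off-image fibers contribute $-\infty$ consistently on both sides.

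The biconjugate~\eqref{eq:one-sided_linear_Fenchel-Moreau_biconjugate} then needs no new computation: from $\SFMbi{\fonctionuncertain}{\coupling_{\theta}} = \bp{\SFM{\fonctionuncertain}{\coupling_{\theta}}}^{\coupling_{\theta}'}$, applying the already-proved reverse formula~\eqref{eq:one-sided_linear_c'-Fenchel-Moreau_conjugate} to $\SFM{\fonctionuncertain}{\coupling_{\theta}}$ gives $\LFM{\bp{\SFM{\fonctionuncertain}{\coupling_{\theta}}}}\circ\theta$, and substituting~\eqref{eq:one-sided_linear_Fenchel-Moreau_conjugate} identifies $\LFM{\bp{\SFM{\fonctionuncertain}{\coupling_{\theta}}}}=\LFMbi{\bp{\InfimalPostcomposition{\theta}{\fonctionuncertain}}}$. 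Finally, for~\eqref{eq:one-sided_linear_Fenchel-Moreau_characteristic} I would substitute the characteristic function $\delta_{\Uncertain}$ into the $\np{-\coupling_{\theta}}$-conjugate~\eqref{eq:minus_Fenchel-Moreau_conjugate}: since $-\delta_{\Uncertain}$ vanishes on $\Uncertain$ and equals $-\infty$ elsewhere, the supremum collapses to $\sup_{\uncertain\in\Uncertain}\proscal{-\theta\np{\uncertain}}{\dual}$, which is by definition the support function $\sigma_{-\theta\np{\Uncertain}}$ of the set $-\theta\np{\Uncertain}$.
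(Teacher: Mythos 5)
Your proof is correct. Note that this paper does not actually contain a proof of Proposition~\ref{pr:one-sided_linear_Fenchel-Moreau_conjugate}: it defers to the companion paper \cite{Chancelier-DeLara:2019b}, so there is no in-paper argument to compare against. Your argument is the natural and complete one: substitution of the coupling~\eqref{eq:one-sided_linear_coupling} into the abstract conjugacy definitions, decomposition of the supremum along the fibers of~$\theta$, extraction of the constant coupling term \( \proscal{\primal}{\dual} \) (which is finite, so the Moreau lower addition indeed distributes over the inner supremum, with empty fibers handled by the convention \( \inf\emptyset=+\infty \)), identification of the result as the Fenchel conjugate of the infimal postcomposition, and then the biconjugate formula~\eqref{eq:one-sided_linear_Fenchel-Moreau_biconjugate} obtained purely formally by chaining \eqref{eq:one-sided_linear_c'-Fenchel-Moreau_conjugate} with \eqref{eq:one-sided_linear_Fenchel-Moreau_conjugate}, and the support-function formula~\eqref{eq:one-sided_linear_Fenchel-Moreau_characteristic} by collapsing the supremum onto~$\Uncertain$.
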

\end{subequations}

\subsection{Lower bound convex programs}
\label{Lower_bound_convex_programs}

To illustrate how we can obtain lower bounds with one-sided linear couplings,
we start with general problems of the form
\begin{equation}
\inf_{\uncertain \in \Uncertain} \fonctionuncertain\np{\uncertain} 
\eqfinv
\label{eq:optimization}
\end{equation}
where \( \fonctionuncertain : \UNCERTAIN \to \barRR \)
and \( \Uncertain \subset \UNCERTAIN \)
(we can always replace the subset~\( \Uncertain \)
by \( \dom\fonctionuncertain \cap \Uncertain \)).

\begin{subequations}
\begin{proposition}
Let $\PRIMAL$ and $\DUAL$ be two vector spaces 
equipped with a bilinear form \( \proscal{}{} \).
Let \( \UNCERTAIN \) be a set.
For any function \( \fonctionuncertain : \UNCERTAIN \to \barRR \),
nonempty set \( \Uncertain \subset \UNCERTAIN \)
and mapping \( \theta: \Uncertain \to \PRIMAL \) ,
we have the following lower bound 
\begin{equation}
      \sup_{\dual \in \DUAL} 
    \Bp{ 
\bp{-\LFM{ \bp{ \InfimalPostcomposition{\theta}{\fonctionuncertain} } }\np{\dual} } 
      \LowPlus 
\bp{-\sigma_{ -\theta\np{\Uncertain} } \np{\dual} }
% \bp{-\SFM{ \delta_{ \theta\np{\Uncertain} } }{-\star}\np{\dual} }
}
    \leq 
\inf_{\uncertain \in \Uncertain} \fonctionuncertain\np{\uncertain} 
\eqfinp
\label{eq:lower_bound_concave_program}
\end{equation}
\label{pr:lower_bound_concave_program}
\end{proposition}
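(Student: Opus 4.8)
The plan is to recognize Proposition~\ref{pr:lower_bound_concave_program} as the specialization of the general constrained coupling inequality~\eqref{eq:dual_problem_inequality_constraints} to the one-sided linear coupling~$\coupling_{\theta}$ of~\eqref{eq:one-sided_linear_coupling}, followed by a rewriting of the two conjugate terms on the left-hand side by means of the closed-form expressions assembled in Proposition~\ref{pr:one-sided_linear_Fenchel-Moreau_conjugate}. Thus the argument is essentially a substitution, and no new analytic content is required beyond results already stated.

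First I would instantiate~\eqref{eq:dual_problem_inequality_constraints}, taking the abstract primal set to be~$\Uncertain$ itself (rather than the ambient~$\UNCERTAIN$), the generic coupling~$\coupling$ to be the one-sided linear coupling~$\coupling_{\theta}$ --- which is well defined on $\Uncertain \times \DUAL$ precisely because $\theta$ is given on~$\Uncertain$ --- the function~$\fonctionprimal$ to be~$\fonctionuncertain$, and the constraint subset~$\Primal$ to be the whole set~$\Uncertain$. Since~$\coupling_{\theta}$ is a bona fide coupling between~$\Uncertain$ and~$\DUAL$, this instantiation is legitimate, and because the constraint subset is the entire ambient set its right-hand side reduces to $\inf_{\uncertain \in \Uncertain} \fonctionuncertain\np{\uncertain}$, yielding
\begin{equation*}
\sup_{\dual \in \DUAL} \Bp{ \bp{ -\SFM{\fonctionuncertain}{\coupling_{\theta}}\np{\dual} } \LowPlus \bp{ -\SFM{\delta_{\Uncertain}}{-\coupling_{\theta}}\np{\dual} } } \leq \inf_{\uncertain \in \Uncertain} \fonctionuncertain\np{\uncertain} \eqfinp
\end{equation*}

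Second I would identify the two conjugates appearing on the left. By~\eqref{eq:one-sided_linear_Fenchel-Moreau_conjugate}, the $\coupling_{\theta}$-Fenchel-Moreau conjugate of~$\fonctionuncertain$ is $\SFM{\fonctionuncertain}{\coupling_{\theta}} = \LFM{\bp{\InfimalPostcomposition{\theta}{\fonctionuncertain}}}$, and by~\eqref{eq:one-sided_linear_Fenchel-Moreau_characteristic}, the $\np{-\coupling_{\theta}}$-Fenchel-Moreau conjugate of the characteristic function of the subset~$\Uncertain$ is the support function $\SFM{\delta_{\Uncertain}}{-\coupling_{\theta}} = \sigma_{-\theta\np{\Uncertain}}$. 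Substituting these two identities into the displayed inequality gives exactly~\eqref{eq:lower_bound_concave_program}, which concludes the argument.

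The only point I expect to require care is bookkeeping about the domain of~$\theta$, not any genuine estimate. Because the proposition only assumes $\theta$ defined on~$\Uncertain$, one must choose~$\Uncertain$ --- and not the larger~$\UNCERTAIN$ --- as the primal set carrying the coupling, which is harmless since the infimal postcomposition~\eqref{eq:InfimalPostcomposition} restricts~$\uncertain$ to the domain of~$\theta$ through the convention $\inf\emptyset=+\infty$, and since nonemptiness of~$\Uncertain$ keeps the right-hand infimum over a nonempty index set. A second, purely notational check is the sign: it is the $\np{-\coupling_{\theta}}$-conjugate, per~\eqref{eq:minus_Fenchel-Moreau_conjugate}, that enters the left-hand side of~\eqref{eq:dual_problem_inequality_constraints}, and $\SFM{\delta_{\Uncertain}}{-\coupling_{\theta}}\np{\dual} = \sup_{\uncertain \in \Uncertain}\bp{-\proscal{\theta\np{\uncertain}}{\dual}}$ is the support function evaluated on the reflected image~$-\theta\np{\Uncertain}$, which explains the minus sign carried through to~\eqref{eq:lower_bound_concave_program}.
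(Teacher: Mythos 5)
Your proposal is correct and follows essentially the same route as the paper: the paper likewise applies the generic coupling inequality (it invokes~\eqref{eq:dual_problem_inequality} after writing \( \inf_{\uncertain\in\Uncertain}\fonctionuncertain = \inf_{\UNCERTAIN}\bp{\fonctionuncertain \UppPlus \delta_{\Uncertain}} \), which is exactly the content of~\eqref{eq:dual_problem_inequality_constraints}) and then substitutes the two conjugate formulas \eqref{eq:one-sided_linear_Fenchel-Moreau_conjugate} and \eqref{eq:one-sided_linear_Fenchel-Moreau_characteristic} from Proposition~\ref{pr:one-sided_linear_Fenchel-Moreau_conjugate}. Your choice of carrying the coupling on~$\Uncertain$ rather than on~$\UNCERTAIN$ is only a bookkeeping variant, consistent with the paper's convention that $\theta$ need not be defined outside~$\Uncertain$.
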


\begin{proof}
As \( \inf_{\uncertain \in \Uncertain} \fonctionuncertain\np{\uncertain} =
\inf_{\uncertain \in \UNCERTAIN} \bp{ \fonctionuncertain\np{\uncertain} 
\UppPlus \delta_{\Uncertain} } \), 
it suffices to use the Inequality~\eqref{eq:dual_problem_inequality},
with the $\coupling_{\theta}$-Fenchel-Moreau conjugate of
the function~\( \fonctionuncertain \) 
given by~\eqref{eq:one-sided_linear_Fenchel-Moreau_conjugate}
and the $\np{-\coupling_{\theta}}$-Fenchel-Moreau conjugate of
the characteristic function~\(  \delta_{\Uncertain} \) 
given by~\eqref{eq:one-sided_linear_Fenchel-Moreau_characteristic}.
\end{proof}

When $\PRIMAL$ and $\DUAL$ are two paired vector spaces,
the dual problem to the left hand side 
of~\eqref{eq:lower_bound_concave_program}
consists in the maximization of a usc concave function.
\medskip

When $\PRIMAL$ and $\DUAL$ for a dual system (see~\S\ref{Dual_system}),
recall that a set \( \Primal\subset \PRIMAL\) 
is said to be \emph{weakly bounded}
if \( \sup_{\primal\in\Primal} \proscal{\primal}{\dual} < +\infty \)
for all \( \dual \in \DUAL \) (see~\eqref{eq:weakly_bounded} 
in Definition~\ref{de:dual_system}). 
Now, when the primal and dual spaces in 
Proposition~\ref{pr:lower_bound_concave_program}
are a Hilbert space, we provide conditions for the lower bound, to 
the left of~\eqref{eq:lower_bound_concave_program},
to display an alternative primal expression as the minimization of 
a lsc convex function on a weakly bounded and closed convex set.

\begin{corollary}
Let $\PRIMAL=\DUAL$ be a Hilbert space.
Let \( \UNCERTAIN \) be a set.
Let \( \fonctionuncertain : \UNCERTAIN \to \barRR \) be a function, 
\( \Uncertain \subset \UNCERTAIN \) be a nonempty set
and \( \theta: \Uncertain \to \PRIMAL \) be a mapping.
If
\begin{enumerate}
\item 
the set~$-\theta\np{\Uncertain}$ is weakly bounded, that is, 
the barrier cone of~$-\theta\np{\Uncertain}$ 
in~\eqref{eq:barrier_cone} is the full space, namely 
\( \barriercone\bp{-\theta\np{\Uncertain}} =  \DUAL \),
\item 
the convex lsc function
\( \LFM{ \bp{ \InfimalPostcomposition{\theta}{\fonctionuncertain} } } 
 : \DUAL \to \barRR \) is proper,
\end{enumerate}
then the lower bound, to 
the left of~\eqref{eq:lower_bound_concave_program},
has the alternative primal expression
\begin{equation}
%\begin{multline*}
      \sup_{\dual \in \DUAL} 
    \Bp{ 
\bp{-\LFM{ \bp{ \InfimalPostcomposition{\theta}{\fonctionuncertain} } }\np{\dual} } 
      \LowPlus 
\bp{-\sigma_{ -\theta\np{\Uncertain} } \np{\dual} }
}
= % \\
      \min_{\primal \in \closedconvexhull
\np{-\theta\np{\Uncertain} } }
\LFMbi{ \bp{ \InfimalPostcomposition{\theta}{\fonctionuncertain} } }\np{\primal} 
    \leq 
\inf_{\uncertain \in \Uncertain} \fonctionuncertain\np{\uncertain} 
\eqfinv 
\label{eq:lower_bound_convex_program}  
\end{equation}
% \end{multline*}
where the primal problem to the left consists in
the minimization of 
a lsc convex function on a weakly bounded and closed convex set.
\label{cor:lower_bound_concave_program_convex_program}
\end{corollary}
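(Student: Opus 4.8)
The plan is to leave the right-hand inequality to Proposition~\ref{pr:lower_bound_concave_program} and to concentrate on the equality, which turns a concave maximization over~$\DUAL$ into a convex minimization over a subset of~$\PRIMAL$. Abbreviate $g = \InfimalPostcomposition{\theta}{\fonctionuncertain}$, so that the function maximized on the left of~\eqref{eq:lower_bound_convex_program} is $\dual \mapsto \bp{-\LFM{g}\np{\dual}} \LowPlus \bp{-\sigma_{-\theta\np{\Uncertain}}\np{\dual}}$. First I would use the elementary fact that a support function only sees the closed convex hull of a set, namely $\sigma_{-\theta\np{\Uncertain}} = \sigma_{C}$ with $C = \closedconvexhull\np{-\theta\np{\Uncertain}}$. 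Hypothesis~1 says exactly that $\sigma_{C}$ is finite everywhere on~$\DUAL$, that is, that $C$ is weakly bounded; since $\PRIMAL=\DUAL$ is a Hilbert space, the Banach--Steinhaus theorem then upgrades this to norm boundedness, so that $C$ is a bounded, closed and convex set, hence weakly compact.

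Next I would rewrite the objective as a minimax. Writing $-\sigma_{C}\np{\dual} = \inf_{\primal \in C}\bp{-\proscal{\primal}{\dual}}$, the left-hand side of~\eqref{eq:lower_bound_convex_program} becomes $\sup_{\dual\in\DUAL}\inf_{\primal\in C}\Bp{-\proscal{\primal}{\dual} - \LFM{g}\np{\dual}}$. Here Hypothesis~2, the properness of the convex lsc function $\LFM{g}$, is what guarantees that this objective never produces the indeterminate $\np{+\infty}\LowPlus\np{-\infty}$, so that the Moreau lower addition coincides with ordinary addition and all quantities are well defined. The integrand is affine (hence concave and usc) in~$\dual$ for each~$\primal$, and affine (hence convex and, for the weak topology, lsc) in~$\primal$ for each~$\dual$. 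Since $C$ is convex and weakly compact while $\DUAL$ is convex, Sion's minimax theorem applies and lets me exchange the two operators, $\sup_{\dual}\inf_{\primal\in C} = \inf_{\primal\in C}\sup_{\dual}$. The inner supremum $\sup_{\dual\in\DUAL}\Bp{-\proscal{\primal}{\dual} - \LFM{g}\np{\dual}}$ is, by the very definition~\eqref{eq:Fenchel_biconjugate} of the Fenchel biconjugate, the value $\LFMbi{g}$ at the relevant argument (the reflection $\primal\mapsto-\primal$ that matches the orientation of~$C$ is routine sign bookkeeping).

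Finally, because $C$ is weakly compact and $\LFMbi{g}$ is convex and lsc --- therefore weakly lsc --- and proper by Hypothesis~2, the infimum over~$C$ is attained, so that $\inf$ may be written as $\min$; this yields the middle term of~\eqref{eq:lower_bound_convex_program}, and chaining with Proposition~\ref{pr:lower_bound_concave_program} gives the displayed chain. The main obstacle is precisely this sup--inf exchange together with attainment, that is, producing strong duality with no gap and a solved primal problem. Both hypotheses are used here and essentially nowhere else: Hypothesis~1 furnishes, through weak compactness of~$C$, both the compactness required by the minimax theorem and the attainment of the minimum, while Hypothesis~2 keeps $\LFMbi{g}$ proper so that the common value is finite and the lower and upper additions reduce to the usual one.

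As an alternative to Sion's theorem, I could run a direct Fenchel--Rockafellar argument on $\inf_{\dual}\bp{\LFM{g}\np{\dual}+\sigma_{C}\np{\dual}}$, taking the boundedness of~$C$ (equivalently $\dom\sigma_{C}=\DUAL$) as the constraint qualification that closes the gap and attains the dual optimum; the conjugate of $\sigma_{C}$ being the indicator $\delta_{C}$ and that of $\LFM{g}$ being $\LFMbi{g}$, one recovers the same minimization of $\LFMbi{g}$ over the hull. Either route reduces the whole statement to one strong-duality theorem powered by the weak compactness of~$C$.
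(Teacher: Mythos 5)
Your overall strategy is sound and genuinely different from the paper's: the paper never invokes compactness or a minimax theorem, but applies the Fenchel--Rockafellar equality \cite[Prop.~15.13]{Bauschke-Combettes:2017} directly to the pair \( \LFM{ \bp{ \InfimalPostcomposition{\theta}{\fonctionuncertain} } } \) and \( \sigma_{-\theta\np{\Uncertain}} \), the constraint qualification being that the support function, convex lsc and finite everywhere by Hypothesis~1, is continuous everywhere, while \( \dom \LFM{ \bp{ \InfimalPostcomposition{\theta}{\fonctionuncertain} } } \neq\emptyset \) by Hypothesis~2. Your route --- Banach--Steinhaus to upgrade weak boundedness to norm boundedness, weak compactness of \( C=\closedconvexhull\np{-\theta\np{\Uncertain}} \), Sion's theorem, attainment by weak lower semicontinuity --- is workable, provided you restrict \( \dual \) to the nonempty convex set \( \dom\LFM{ \bp{ \InfimalPostcomposition{\theta}{\fonctionuncertain} } } \) so that Sion's theorem is applied to a real-valued function (and note that it is Hypothesis~1, not Hypothesis~2, that excludes the indeterminate sums, since it makes \( \sigma_{C} \) finite; also the common optimal value need not be finite). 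Your closing ``alternative'' paragraph is precisely the paper's proof.

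There is, however, a genuine gap, and it sits exactly in the step you dismiss as ``routine sign bookkeeping.'' Carried out, your exchange gives
\[
\inf_{\primal\in C}\,\sup_{\dual}\Bp{\proscal{-\primal}{\dual}-\LFM{ \bp{ \InfimalPostcomposition{\theta}{\fonctionuncertain} } }\np{\dual}}
=\min_{\primal\in C}\LFMbi{ \bp{ \InfimalPostcomposition{\theta}{\fonctionuncertain} } }\np{-\primal}
=\min_{\primal\in -C}\LFMbi{ \bp{ \InfimalPostcomposition{\theta}{\fonctionuncertain} } }\np{\primal}
\eqfinv
\]
and \( -C=\closedconvexhull\np{\theta\np{\Uncertain}} \), which is \emph{not} the set \( \closedconvexhull\np{-\theta\np{\Uncertain}} \) in the middle term of~\eqref{eq:lower_bound_convex_program}; no symmetry of \( \theta\np{\Uncertain} \) is assumed, so the reflection cannot be absorbed. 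In fact your computation is the correct one, and the printed identity carries a sign slip: the paper's own proof uses \( \sup_{\dual}\bp{-A\np{\dual}-B\np{\dual}}=\np{A+B}^{\star}\np{0}=\min_{\primal}\bc{\LFM{A}\np{\primal}+\LFM{B}\np{-\primal}} \) but forgets the reflection in \( \LFM{B}=\delta_{C} \). A one-line example: take \( \UNCERTAIN=\PRIMAL=\RR \), \( \Uncertain=\{1\} \), \( \theta \) the canonical injection, \( \fonctionuncertain\np{\uncertain}=\uncertain \). Both hypotheses hold, \( \InfimalPostcomposition{\theta}{\fonctionuncertain}=1+\delta_{\{1\}} \), the left-hand side of~\eqref{eq:lower_bound_convex_program} equals~\( 1 \), yet \( \min_{\primal\in\closedconvexhull\np{-\theta\np{\Uncertain}}}\LFMbi{ \bp{ \InfimalPostcomposition{\theta}{\fonctionuncertain} } }\np{\primal}=\LFMbi{ \bp{ \InfimalPostcomposition{\theta}{\fonctionuncertain} } }\np{-1}=+\infty \). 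So an honest execution of your plan proves the corrected statement, with the minimum taken over \( \closedconvexhull\np{\theta\np{\Uncertain}} = -\closedconvexhull\np{-\theta\np{\Uncertain}} \); it cannot prove the equation as printed, because that equation is false. You should flag this discrepancy explicitly rather than bury it in ``bookkeeping'': in the paper's applications the relevant sets are symmetric, \( -\theta\np{\Uncertain}=\theta\np{\Uncertain} \), which is why the slip is invisible downstream, but for the corollary as stated it is fatal.
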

\end{subequations}

\begin{proof}
We consider the Inequality~\eqref{eq:lower_bound_concave_program}.
On the one hand, the convex lsc function
\( \LFM{ \bp{\InfimalPostcomposition{\normalized}{\fonctionprimal}} } \)
is proper by assumption.
On the other hand, the support function~\( \sigma_{ -\theta\np{\Uncertain} } \)
is convex lsc, and also proper.
Indeed, \( -\infty < \sigma_{ -\theta\np{\Uncertain} } \)
since \( \theta\np{\Uncertain} \not= \emptyset \) by assumption,
and \( \dom\sigma_{ -\theta\np{\Uncertain} }  = \DUAL \)
since \( \barriercone\bp{-\theta\np{\Uncertain}} =  \DUAL \)
by assumption.
As a consequence, the support function~\( \sigma_{ -\theta\np{\Uncertain} } \)
has for effective domain the full space~$\DUAL$,
hence its continuity points are
\( \continuitypoints \bp{ \sigma_{ -\theta\np{\Uncertain} } }= \DUAL \).
As \( \dom\bp{\LFM{ 
      \np{\InfimalPostcomposition{\normalized}{\fonctionprimal}}}} 
\not= \emptyset \), we deduce that 
\( \continuitypoints \bp{ \sigma_{ -\theta\np{\Uncertain} } }
\cap 
\dom\bp{\LFM{ 
      \np{\InfimalPostcomposition{\normalized}{\fonctionprimal}}}} 
= 
\dom\bp{\LFM{ 
      \np{\InfimalPostcomposition{\normalized}{\fonctionprimal}}}} 
\not= \emptyset \).

Thus, the conditions for a Fenchel-Rockafellar equality are satisfied 
\cite[Prop.~15.13]{Bauschke-Combettes:2017} 
and we obtain that 
\begin{equation*}
      \sup_{\dual \in \DUAL} 
    \Bp{ 
\bp{-\LFM{ \bp{ \InfimalPostcomposition{\theta}{\fonctionuncertain} } }\np{\dual} } 
      \LowPlus 
\bp{-\sigma_{ -\theta\np{\Uncertain} } \np{\dual} }
}
= % \\
      \min_{\primal \in \PRIMAL} \Bp{
\LFMbi{ \bp{ \InfimalPostcomposition{\theta}{\fonctionuncertain} } }\np{\primal} 
\UppPlus \delta_{\closedconvexhull\np{-\theta\np{\Uncertain} } } } 
= % \\
      \min_{\primal \in \closedconvexhull
\np{-\theta\np{\Uncertain} } }
\LFMbi{ \bp{ \InfimalPostcomposition{\theta}{\fonctionuncertain} } }\np{\primal} 
\eqfinp
\end{equation*}
The set \( \closedconvexhull\np{-\theta\np{\Uncertain}} \) is closed convex
by definition, and is weakly bounded as the finite union of 
weakly bounded sets by~\eqref{eq:barrier_cone_of_a_finite_union}. 

This ends the proof.
\end{proof}

\section{Lower bound convex programs for exact sparse optimization}
\label{Dual_problems_for_exact_sparse_optimization_problems}

In this section, we consider minimization problems under the constraint that
the \pseudonormlzero\ is less than a given integer.
In~\S\ref{Constant_along_primal_rays_coupling_and_conjugacy},
we introduce and recall the main properties of the so-called
coupling \Capra~\cite{Chancelier-DeLara:2019b}.
Then, in~\S\ref{Lower_bounds_for_exact_sparse_optimization_problems},
we show how to obtain lower bounds for exact sparse optimization problems.

In this section, we work on the Euclidian space~$\RR^d$
(with $d \in \NN^*$), equipped with the scalar product 
\( \proscal{}{} \)
and with the Euclidian norm
\( \norm{\cdot} = \sqrt{ \proscal{\cdot}{\cdot} } \).

\subsection{Constant along primal rays coupling (\Capra)
and conjugacy}
\label{Constant_along_primal_rays_coupling_and_conjugacy}

To provide lower bounds, we make use of a suitable conjugacy
(not the Fenchel one) induced by a novel coupling \Capra.
This coupling has the property of being constant along primal rays, 
like the \pseudonormlzero.
The material here is mostly taken from 
the companion paper~\cite{Chancelier-DeLara:2019b}.

\subsubsection{Constant along primal rays coupling and conjugacy}

In~\cite{Chancelier-DeLara:2019b}, 
we have introduced and studied a novel coupling, 
defined as follows on the Euclidian space~$\RR^d$.

\begin{definition}
  We define the \emph{(Euclidian) coupling \Capra}~$\couplingCAPRA$ 
between $\RR^d$ and $\RR^d$ by
\begin{equation}
\forall \dual \in \RR^d \eqsepv 
\begin{cases}
  \couplingCAPRA\np{\primal, \dual} 
&= \displaystyle
\frac{ \proscal{\primal}{\dual} }{ \norm{\primal} }
\eqsepv \forall \primal \in \RR^d\backslash\{0\} \eqfinv
\\[4mm]
\couplingCAPRA\np{0, \dual} &= 0.
\end{cases}
\label{eq:coupling_CAPRAC_original}
\end{equation}
\end{definition}

The coupling \Capra\ has the property of being 
\emph{constant along primal rays}, hence the acronym~\Capra.
We introduce the \emph{Euclidian unit sphere}
\begin{equation}
  \SPHERE= \defset{\primal \in \RR^d}{\norm{\primal} = 1} 
%\mtext{ and }
%  \BALL= \defset{\primal \in \RR^d}{\norm{\primal} \leq 1} 
\eqfinv 
\label{eq:Euclidian_SPHERE}
\end{equation}
and the \emph{normalization mapping}~$\normalized$
\begin{equation*}
\normalized : \RR^d \to \SPHERE \cup \{0\} 
\eqsepv
\normalized\np{\primal}=
\begin{cases}
\frac{ \primal }{ \triplenorm{\primal} }
& \mtext{ if } \primal \neq 0 \eqfinv 
\\
0 
& \mtext{ if } \primal = 0 \eqfinp
\end{cases}  
%\label{eq:normalization_mapping}
\end{equation*}

With these notations, the  \Capra\ 
coupling~\eqref{eq:coupling_CAPRAC_original} is a special case
of one-sided linear coupling $\coupling_{\normalized}$, 
as in~\eqref{eq:one-sided_linear_coupling} with \(\theta=\normalized\),
the Fenchel coupling after primal normalization:
\begin{equation*}
\couplingCAPRA\np{\primal, \dual} = 
\coupling_{\normalized}\np{\primal, \dual} = 
\proscal{\normalized\np{\primal}}{\dual} 
\eqsepv \forall \primal \in \RR^d
\eqsepv \forall \dual \in \RR^d 
\eqfinp
\label{eq:coupling_CAPRAC}
\end{equation*}

Here are expressions for the \Capra\ conjugates 
in function of the Fenchel conjugate.

\begin{subequations}
\begin{proposition}
  For any function \( \fonctiondual : \RR^d \to \barRR \), 
  the $\couplingCAPRA'$-Fenchel-Moreau conjugate is given by 
  \begin{equation}
\SFM{\fonctiondual}{\couplingCAPRA'}=
 \LFM{ \fonctiondual } \circ \normalized
\eqfinp 
\label{eq:CAPRA'_Fenchel-Moreau_conjugate}
\end{equation}
  For any function \( \fonctionprimal : \RR^d \to \barRR \), 
  the $\couplingCAPRA$-Fenchel-Moreau conjugate is given by 
   \begin{equation}
\SFM{\fonctionprimal}{\couplingCAPRA}=
 \LFM{ \bp{\InfimalPostcomposition{\normalized}{\fonctionprimal}} }
\eqfinv 
\label{eq:CAPRA_Fenchel-Moreau_conjugate}
  \end{equation}
where the infimal postcomposition~\eqref{eq:InfimalPostcomposition}
has the expression
\begin{equation}
\bp{\InfimalPostcomposition{\normalized}{\fonctionprimal}}\np{\primal}
=
\inf\defset{\fonctionprimal\np{\primal'}}{%\primal' \in \RR^d,
\normalized\np{\primal'}=\primal}
=
\begin{cases}
  \inf_{\lambda > 0} \fonctionprimal\np{\lambda\primal}
& \text{if } \primal \in \SPHERE  \cup \{0\} 
\\
+\infty  
& \text{if } \primal \not\in \SPHERE \cup \{0\} 
\end{cases}
\label{eq:CAPRA_InfimalPostcomposition}
  \end{equation}
and the $\couplingCAPRA$-Fenchel-Moreau biconjugate 
is given by
\begin{equation}
  \SFMbi{\fonctionprimal}{\couplingCAPRA}
= 
\SFM{ \bp{ \SFM{\fonctionprimal}{\couplingCAPRA} } }{\star} 
\circ \normalized
=
 \LFMbi{ \bp{\InfimalPostcomposition{\normalized}{\fonctionprimal}} }
\circ \normalized
\eqfinp
\label{eq:CAPRA_Fenchel-Moreau_biconjugate}
\end{equation}
  \label{pr:CAPRA_Fenchel-Moreau_conjugate}
\end{proposition}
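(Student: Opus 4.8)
The plan is to observe that the entire statement is the specialization of Proposition~\ref{pr:one-sided_linear_Fenchel-Moreau_conjugate} to the one-sided linear coupling attached to the normalization mapping. Just above the proposition we have recorded that the \Capra\ coupling coincides with the one-sided linear coupling $\coupling_{\normalized}$, that is, $\couplingCAPRA = \coupling_{\normalized}$ with $\theta = \normalized : \RR^d \to \SPHERE \cup \{0\}$ and with all three sets $\UNCERTAIN$, $\PRIMAL$, $\DUAL$ taken equal to $\RR^d$. Under this identification, formula~\eqref{eq:CAPRA'_Fenchel-Moreau_conjugate} is literally~\eqref{eq:one-sided_linear_c'-Fenchel-Moreau_conjugate}, formula~\eqref{eq:CAPRA_Fenchel-Moreau_conjugate} is literally~\eqref{eq:one-sided_linear_Fenchel-Moreau_conjugate}, and the biconjugate formula~\eqref{eq:CAPRA_Fenchel-Moreau_biconjugate} is literally~\eqref{eq:one-sided_linear_Fenchel-Moreau_biconjugate}. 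Hence these three identities require no separate argument: they are inherited from the companion result by setting $\theta = \normalized$.

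The only item calling for a direct computation is the explicit value of the infimal postcomposition $\bp{\InfimalPostcomposition{\normalized}{\fonctionprimal}}$ asserted in~\eqref{eq:CAPRA_InfimalPostcomposition}, which I would obtain by describing the fibers of~$\normalized$. Since the range of~$\normalized$ is $\SPHERE \cup \{0\}$, for every $\primal \notin \SPHERE \cup \{0\}$ the index set $\defset{\primal'}{\normalized\np{\primal'} = \primal}$ is empty, so the convention $\inf\emptyset = +\infty$ produces the second branch. For $\primal = 0$, the equation $\normalized\np{\primal'} = 0$ forces $\primal' = 0$, whence the infimal postcomposition equals $\fonctionprimal\np{0}$. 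For $\primal \in \SPHERE$, I would check that $\normalized\np{\primal'} = \primal$ holds exactly when $\primal' = \lambda\primal$ for some $\lambda > 0$: on the one hand $\normalized\np{\lambda\primal} = \lambda\primal / \norm{\lambda\primal} = \primal / \norm{\primal} = \primal$ because $\norm{\primal} = 1$; on the other hand, any $\primal'$ with $\normalized\np{\primal'} = \primal$ is nonzero and satisfies $\primal' = \norm{\primal'}\primal$, a positive multiple of~$\primal$. Feeding the fiber $\na{\lambda\primal \mid \lambda > 0}$ into definition~\eqref{eq:InfimalPostcomposition} yields the first branch $\inf_{\lambda > 0}\fonctionprimal\np{\lambda\primal}$, and since $\inf_{\lambda > 0}\fonctionprimal\np{\lambda\cdot 0} = \fonctionprimal\np{0}$ the $\primal = 0$ case is absorbed into the same expression.

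The main obstacle is entirely mild: it amounts to the careful bookkeeping of the fiber $\normalized^{-1}\np{\primal}$ over each stratum (the origin, the unit sphere, and the remainder of~$\RR^d$) together with the harmless verification that the $\primal = 0$ value is subsumed by the unified formula $\inf_{\lambda > 0}\fonctionprimal\np{\lambda\primal}$. With the three conjugacy identities transferred from Proposition~\ref{pr:one-sided_linear_Fenchel-Moreau_conjugate} and the infimal postcomposition computed as above, the proof is complete.
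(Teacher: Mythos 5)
Your proposal is correct and follows exactly the route the paper intends: the paper itself identifies \( \couplingCAPRA = \coupling_{\normalized} \) immediately before the proposition, so that the three conjugacy formulas are the specialization of Proposition~\ref{pr:one-sided_linear_Fenchel-Moreau_conjugate} to \( \theta=\normalized \), and the only remaining work is the fiber computation for \( \InfimalPostcomposition{\normalized}{\fonctionprimal} \), which you carry out correctly (empty fiber off \( \SPHERE\cup\{0\} \), the singleton \( \{0\} \) at the origin, and the open ray \( \na{\lambda\primal}_{\lambda>0} \) on the sphere). The paper defers the proof to the companion reference, but your argument supplies precisely the missing details in the same spirit.
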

\end{subequations}

\subsubsection{\Capra\ conjugates and biconjugates 
related to the \pseudonormlzero}
\label{CAPRAC_conjugates_and_biconjugates_related_to_the_pseudo_norm}

Now, we will give formulas for conjugates and biconjugates 
of functions related to the \pseudonormlzero.

First, we recall the definitions of 
the $2$-$k$-symmetric gauge norm and the $k$-support norm.
For any \( \primal \in \RR^d \) and \( K \subset \ba{1,\ldots,d} \), 
we denote by
\( \primal_K \in \RR^d \) the vector which coincides with~\( \primal \),
except for the components outside of~$K$ that vanish:
\( \primal_K \) is the orthogonal projection of~\( \primal \) onto
the subspace \( \RR^K \times \{0\}^{-K} \subset \RR^d \).
Here, following notation from Game Theory, 
we have denoted by $-K$ the complementary subset 
of~$K$ in \( \ba{1,\ldots,d} \): \( K \cup (-K) = \ba{1,\ldots,d} \)
and \( K \cap (-K) = \emptyset \). 
In what follows, 
$\cardinal{K}$ denotes the cardinal of the set~$K$ and 
the notation \( \sup_{\cardinal{K} \leq k} \) 
is a shorthand for \( \sup_{ { K \subset \na{1,\ldots,d}, \cardinal{K} \leq k}} \)
(the same holds for \( \sup_{\cardinal{K} = k} \)).

\begin{definition}
  Let \( \primal \in \RR^d \).
  For \( k \in \ba{1,\ldots,d} \), 
we denote by \( \SymmetricGaugeNorm{k}{\primal} \)
  the maximum of \( \norm{\primal_K} \) over all subsets \( K \subset
  \ba{1,\ldots,d} \) with cardinal (less than)~$k$:
  \begin{equation}
\SymmetricGaugeNorm{k}{\primal}
=
\sup_{\cardinal{K} \leq k}\norm{\primal_K} 
=
\sup_{\cardinal{K} = k}\norm{\primal_K} 
    \eqfinp
    \label{eq:symmetric_gauge_norm}
  \end{equation}
  Thus defined, \( \SymmetricGaugeNorm{k}{\cdot} \) is a norm,
the \emph{$2$-$k$-symmetric gauge norm}~\cite{Mirsky:1960}.
%or \emph{Ky Fan vector norm}.
Its dual norm (see Proposition~\ref{pr:DUAL_NORM_Euclidian}) is called
\emph{$k$-support norm~\cite{Argyriou-Foygel-Srebro:2012}}, denoted by
\( \SupportNorm{k}{\cdot} \):
\begin{equation}
  \SupportNorm{k}{\cdot} = \bp{ \SymmetricGaugeNorm{k}{\cdot} }_{\star}
\eqfinp
    \label{eq:support_norm}
\end{equation}
    \label{de:symmetric_gauge_norm}
\end{definition}
\medskip

Second, we recall the definition of the \pseudonormlzero.
We define the \emph{support} of a vector in~\( \RR^d \) by
\begin{equation*}
\Support{\primal} = 
\defset{ j \in \ba{1,\ldots,d} }{\primal_j > 0 }
\eqsepv \forall \primal \in \RR^d 
\eqfinp
%\label{eq:support}
\end{equation*}
The so-called \emph{\pseudonormlzero} is the function
\( \lzero : \RR^d \to \ba{0,1,\ldots,d} \)
defined by %, for any \( \primal \in \RR^d \), by
\begin{equation}
\lzero\np{\primal} = % \module{\primal}_{0} = 
\cardinal{\Support{\primal}}
% \mathrm{number of nonzero components of } \primal
\eqsepv \forall \primal \in \RR^d 
\eqfinp
\label{eq:pseudo_norm_l0}  
\end{equation}

The \pseudonormlzero\ is used in exact sparse optimization problems of the form
\( \inf_{ \lzero\np{\primal} \leq k } \fonctionprimal\np{\primal} \).
This is why we introduce the \emph{level sets}
of the \pseudonormlzero:
\begin{equation}
  \LevelSet{\lzero}{k} 
= 
\defset{ \primal \in \RR^d }{ \lzero\np{\primal} \leq k }
\eqsepv \forall k \in \ba{0,1,\ldots,d} 
\eqfinp
\label{eq:level_set_pseudonormlzero}
  \end{equation}
\medskip

Third, we present the main result of~\cite{Chancelier-DeLara:2019b}.
  The \pseudonormlzero\ in~\eqref{eq:pseudo_norm_l0}, 
the characteristic function \( \delta_{ \LevelSet{\lzero}{k} } \) 
of its level set
and the symmetric gauge norms in~\eqref{eq:symmetric_gauge_norm}
are related by the following conjugate formulas.

\begin{theorem}
Let \( k \in \ba{0,1,\ldots,d} \). We have that:
  \begin{subequations}
    \begin{align}
      \SFM{ \delta_{ \LevelSet{\lzero}{k} } }{-\couplingCAPRA}
= \SFM{ \delta_{ \LevelSet{\lzero}{k} } }{\couplingCAPRA}
      &=
        \SymmetricGaugeNorm{k}{\cdot} 
\eqfinv
        \label{eq:conjugate_delta_l0norm}
      \\ % \mtext{ and }
      \SFMbi{ \delta_{ \LevelSet{\lzero}{k} } }{\couplingCAPRA} %\np{\primal} 
      &=
        \delta_{ \LevelSet{\lzero}{k} } %\np{\primal}  
\eqfinv
        \label{eq:biconjugate_delta_l0norm}
      \\
      \SFM{ \lzero }{\couplingCAPRA} %\np{\dual} 
      &=
\sup_{l=0,1,\ldots,d} \Bc{ \SymmetricGaugeNorm{l}{\cdot} -l }  
%\sup \ba{ 0, \SymmetricGaugeNorm{1}{\cdot} -1, \ldots, \SymmetricGaugeNorm{d}{\cdot} -d }
 %\sup_{l=1,\ldots,d} \bc{ \SymmetricGaugeNorm{l}{\cdot} - l }_+ 
\eqfinv
        \label{eq:conjugate_l0norm}
      \\
      \SFMbi{ \lzero }{\couplingCAPRA} %\np{\primal}
      &=\lzero 
        \label{eq:biconjugate_l0norm}
        \eqfinv
    \end{align}
  \end{subequations}
with the convention, in~\eqref{eq:conjugate_delta_l0norm}
and in~\eqref{eq:conjugate_l0norm}, 
that \( \SymmetricGaugeNorm{0}{\cdot} = 0 \).
\end{theorem}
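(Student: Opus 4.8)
The plan is to reduce each of the four identities to a Fenchel computation by means of the \Capra\ conjugacy formulas of Proposition~\ref{pr:CAPRA_Fenchel-Moreau_conjugate}, relying at one key point on the strict convexity of the Euclidian norm. The computation that recurs throughout is this: for $\dual\in\RR^d$ and an integer $l$, the supremum of $\couplingCAPRA\np{\primal,\dual}=\proscal{\primal}{\dual}/\norm{\primal}$ over the vectors $\primal$ with $\lzero\np{\primal}\le l$ equals $\SymmetricGaugeNorm{l}{\dual}$. Indeed, for nonzero $\primal$ supported in $K$ one has $\proscal{\primal}{\dual}=\proscal{\primal}{\dual_K}\le\norm{\primal}\,\norm{\dual_K}$ by Cauchy--Schwarz, with equality when $\primal\propto\dual_K$; maximizing $\norm{\dual_K}$ over $\cardinal{K}\le l$ gives $\SymmetricGaugeNorm{l}{\dual}$ by~\eqref{eq:symmetric_gauge_norm}. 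For~\eqref{eq:conjugate_delta_l0norm}, I would first evaluate the infimal postcomposition in~\eqref{eq:CAPRA_InfimalPostcomposition}: as $\lzero$ is invariant under positive scaling, $\InfimalPostcomposition{\normalized}{\delta_{\LevelSet{\lzero}{k}}}=\delta_{A_k}$, where $A_k=\normalized\np{\LevelSet{\lzero}{k}}=\defset{\primal\in\SPHERE}{\lzero\np{\primal}\le k}\cup\{0\}$. Then~\eqref{eq:CAPRA_Fenchel-Moreau_conjugate} gives $\SFM{\delta_{\LevelSet{\lzero}{k}}}{\couplingCAPRA}=\LFM{\delta_{A_k}}=\sigma_{A_k}$, and the recurring computation identifies $\sigma_{A_k}$ with $\SymmetricGaugeNorm{k}{\cdot}$. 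Equality with the $\np{-\couplingCAPRA}$-conjugate follows from~\eqref{eq:one-sided_linear_Fenchel-Moreau_characteristic}, which yields $\SFM{\delta_{\LevelSet{\lzero}{k}}}{-\couplingCAPRA}=\sigma_{-A_k}=\sigma_{A_k}$ since $A_k$ is symmetric (supports being stable under sign change).

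For~\eqref{eq:conjugate_l0norm}, I would expand $\SFM{\lzero}{\couplingCAPRA}\np{\dual}=\sup_{\primal}\bp{\couplingCAPRA\np{\primal,\dual}-\lzero\np{\primal}}$ and split the supremum according to the value $l=\lzero\np{\primal}\in\ba{0,\ldots,d}$. By the recurring computation, the inner supremum over $\defset{\primal}{\lzero\np{\primal}=l}$ of $\couplingCAPRA\np{\primal,\dual}$ is $\SymmetricGaugeNorm{l}{\dual}$, whence $\SFM{\lzero}{\couplingCAPRA}\np{\dual}=\sup_{l=0,\ldots,d}\bp{\SymmetricGaugeNorm{l}{\dual}-l}$, the term $l=0$ being handled by the convention $\SymmetricGaugeNorm{0}{\cdot}=0$.

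For the indicator biconjugate~\eqref{eq:biconjugate_delta_l0norm}, I would apply~\eqref{eq:CAPRA_Fenchel-Moreau_biconjugate}, so that $\SFMbi{\delta_{\LevelSet{\lzero}{k}}}{\couplingCAPRA}=\LFMbi{\delta_{A_k}}\circ\normalized=\delta_{\closedconvexhull A_k}\circ\normalized$, using that the Fenchel biconjugate of an indicator is the indicator of its closed convex hull. Since $A_k$ is compact, $\closedconvexhull A_k=\convexhull A_k$ is contained in the Euclidian unit ball. The crucial geometric step is that any point of $\convexhull A_k$ lying on the sphere~$\SPHERE$ must already belong to~$A_k$: writing such a point as a convex combination of elements of $A_k$ and invoking the strict convexity of $\norm{\cdot}$ forces the active generators to coincide with it. As $\normalized\np{\primal}\in\SPHERE\cup\{0\}$ always, this shows that $\SFMbi{\delta_{\LevelSet{\lzero}{k}}}{\couplingCAPRA}\np{\primal}$ vanishes exactly when $\normalized\np{\primal}\in A_k$, that is, when $\lzero\np{\primal}\le k$, giving $\SFMbi{\delta_{\LevelSet{\lzero}{k}}}{\couplingCAPRA}=\delta_{\LevelSet{\lzero}{k}}$.

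The main obstacle is~\eqref{eq:biconjugate_l0norm}. By~\eqref{eq:CAPRA_Fenchel-Moreau_biconjugate} together with $\lzero\circ\normalized=\lzero$, it reduces to showing that the Fenchel biconjugate of $g=\InfimalPostcomposition{\normalized}{\lzero}=\lzero+\delta_{\SPHERE\cup\{0\}}$ coincides with $\lzero$ at every $\primal\in\SPHERE\cup\{0\}$; since $\LFMbi{g}\le g$ by~\eqref{eq:galois-cor}, only the lower bound $\LFMbi{g}\np{\primal}\ge\lzero\np{\primal}$ remains. Here I would build an explicit dual certificate along the radial direction. Fix $\primal\in\SPHERE$ with $\lzero\np{\primal}=l_0$ and set $\dual=\alpha\primal$; then $\SymmetricGaugeNorm{l}{\primal}=1$ for $l\ge l_0$, but $\SymmetricGaugeNorm{l}{\primal}<1$ for $l<l_0$, because dropping a nonzero coordinate strictly decreases the Euclidian norm. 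Using the expression of $\LFM{g}=\SFM{\lzero}{\couplingCAPRA}$ from~\eqref{eq:conjugate_l0norm}, the value $\proscal{\primal}{\dual}-\np{\SymmetricGaugeNorm{l}{\dual}-l}=\alpha\np{1-\SymmetricGaugeNorm{l}{\primal}}+l$ is at least $l_0$ for every $l$ once $\alpha\ge\max_{l<l_0}\np{l_0-l}/\np{1-\SymmetricGaugeNorm{l}{\primal}}$; as $\LFM{g}\np{\dual}=\sup_l\np{\SymmetricGaugeNorm{l}{\dual}-l}$, this means $\proscal{\primal}{\dual}-\LFM{g}\np{\dual}\ge l_0$, whence $\LFMbi{g}\np{\primal}\ge l_0$. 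The delicate point, which is exactly where the nonconvex $\lzero$ coincides with its \Capra-biconjugate, is that a single radius $\alpha$ can dominate all sparsity levels $l<l_0$ simultaneously, and this rests on the strict gap $\SymmetricGaugeNorm{l}{\primal}<1$ below the true sparsity.
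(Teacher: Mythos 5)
You should first note that this paper does not itself prove the statement: the theorem is quoted as ``the main result of~\cite{Chancelier-DeLara:2019b}'', so there is no in-paper proof to compare your argument against. Judged on its own terms, your proof is correct and self-contained.

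All four reductions, via Proposition~\ref{pr:CAPRA_Fenchel-Moreau_conjugate}, to Fenchel computations on the normalized image \( A_k=\normalized\bp{\LevelSet{\lzero}{k}}=\defset{\primal\in\SPHERE}{\lzero\np{\primal}\leq k}\cup\na{0} \) are the right mechanism, and the delicate points check out. (i)~The Cauchy--Schwarz computation gives \( \SymmetricGaugeNorm{l}{\dual} \) as the supremum of \( \couplingCAPRA\np{\cdot,\dual} \) both over \( \lzero\leq l \) and over the level curve \( \lzero=l \); in the latter case the supremum need not be attained when \( \dual_K \) has support strictly inside~\( K \), but its value is still \( \norm{\dual_K} \), which is all your splitting of~\eqref{eq:conjugate_l0norm} by sparsity level requires. (ii)~The identity \( \SFM{\delta_{\LevelSet{\lzero}{k}}}{-\couplingCAPRA}=\SFM{\delta_{\LevelSet{\lzero}{k}}}{\couplingCAPRA} \) does follow from \( -A_k=A_k \) combined with~\eqref{eq:one-sided_linear_Fenchel-Moreau_characteristic}. (iii)~For~\eqref{eq:biconjugate_delta_l0norm}, compactness of~\( A_k \) (so that \( \closedconvexhull\np{A_k}=\convexhull\np{A_k} \)) together with the equality case of the triangle inequality for the Euclidian norm forces \( \convexhull\np{A_k}\cap\SPHERE=A_k\cap\SPHERE \), which is exactly the crux; composing with~\( \normalized \) then recovers \( \delta_{\LevelSet{\lzero}{k}} \). (iv)~For~\eqref{eq:biconjugate_l0norm}, the radial certificate \( \dual=\alpha\primal \) works precisely because \( \SymmetricGaugeNorm{l}{\primal}<1 \) whenever \( l<\lzero\np{\primal} \) and \( \primal\in\SPHERE \), so a single finite \( \alpha \) dominates all levels \( l<l_0 \) while the level \( l=l_0 \) pins the value at~\( l_0 \). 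Two points deserve an explicit line in a final write-up: the conjugate \( \LFM{\bp{\InfimalPostcomposition{\normalized}{\lzero}}}=\SFM{\lzero}{\couplingCAPRA} \) is finite everywhere by~\eqref{eq:conjugate_l0norm}, so no \( \infty-\infty \) ambiguity (in the sense of the Moreau additions) arises in the biconjugate; and the case \( \primal=0 \) of~\eqref{eq:biconjugate_l0norm} needs the one-line check \( \LFMbi{\bp{\InfimalPostcomposition{\normalized}{\lzero}}}\np{0}\geq \proscal{0}{0}-\SFM{\lzero}{\couplingCAPRA}\np{0}=0 \). These are routine and do not affect the validity of your argument.
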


\subsection{Lower bound convex program for exact sparse optimization}
\label{Lower_bounds_for_exact_sparse_optimization_problems}

With the \Capra-conjugacy recalled
in~\S\ref{Constant_along_primal_rays_coupling_and_conjugacy}, 
we now show how to obtain lower bounds 
for exact sparse optimization problems,
that are concave maximization programs.
In addition, under a mild assumption, we will show that this lower bound 
coincides with a convex minimization program
on the unit ball of the $k$-support norm, 
recalled in Definition~\ref{de:symmetric_gauge_norm}.
% in~\S\ref{Constant_along_primal_rays_coupling_and_conjugacy}.

\begin{subequations}
\begin{theorem}
Let \( k \in \ba{0,1,\ldots,d} \). 
For any function \( \fonctionprimal : \RR^d \to \barRR \),
we have the following lower bound 
\begin{equation}
   \sup_{\dual \in \RR^d} 
    \Bp{ -\LFM{ 
\bp{\InfimalPostcomposition{\normalized}{\fonctionprimal}} 
%\bp{ \hat{ \fonctionprimal } \UppPlus \delta_{ \SPHERE \cup \{0\} } }
 }\np{\dual}
      - \SymmetricGaugeNorm{k}{\dual} }
\leq 
\inf_{ \lzero\np{\primal} \leq k } \fonctionprimal\np{\primal} 
  \eqfinv
  \label{eq:duality_sparse_concave}
\end{equation}
where the dual problem to the left consists in the maximization of 
a usc concave function.

If, in addition, the convex lsc function
\( \LFM{ \bp{\InfimalPostcomposition{\normalized}{\fonctionprimal}} } \)
is proper, the above lower bound has the alternative primal expression
\begin{equation}
  \min_{ \SupportNorm{k}{\primal} \leq 1} 
  \LFMbi{ \bp{\InfimalPostcomposition{\normalized}{\fonctionprimal}} } 
  % \LFMbi{ \bp{ \hat{ \fonctionprimal } \UppPlus \delta_{ \SPHERE \cup \{0\} } } }
  \np{\primal} 
  =
  \sup_{\dual \in \RR^d} 
  \Bp{ -\LFM{ 
      \bp{\InfimalPostcomposition{\normalized}{\fonctionprimal}} 
      % \bp{ \hat{ \fonctionprimal } \UppPlus \delta_{ \SPHERE \cup \{0\} } }
    }\np{\dual}
    - \SymmetricGaugeNorm{k}{\dual} }
  \leq 
  \inf_{ \lzero\np{\primal} \leq k } \fonctionprimal\np{\primal} 
  \eqfinv
  \label{eq:duality_sparse_convex}
\end{equation}
where the primal problem to the left consists in the minimization of 
a lsc convex function on the unit ball of the $k$-support norm. 
\label{th:duality_sparse_convex}
\end{theorem}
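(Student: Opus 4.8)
The plan is to specialize the general one-sided-coupling machinery of Section~\ref{One-sided_linear_couplings_and_lower_bound_convex_programs} to the \Capra\ coupling $\couplingCAPRA = \coupling_{\normalized}$, that is, to the one-sided linear coupling with $\theta = \normalized$ the normalization mapping, taking as objective the function~$\fonctionprimal$ and as constraint set $\Uncertain = \LevelSet{\lzero}{k}$, since $\inf_{\lzero\np{\primal} \leq k} \fonctionprimal\np{\primal} = \inf_{\primal \in \LevelSet{\lzero}{k}} \fonctionprimal\np{\primal}$. For the lower bound~\eqref{eq:duality_sparse_concave}, I would invoke Proposition~\ref{pr:lower_bound_concave_program} with these data. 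By~\eqref{eq:CAPRA_Fenchel-Moreau_conjugate} in Proposition~\ref{pr:CAPRA_Fenchel-Moreau_conjugate}, the $\couplingCAPRA$-conjugate of~$\fonctionprimal$ is $\LFM{\bp{\InfimalPostcomposition{\normalized}{\fonctionprimal}}}$, while the support function $\sigma_{-\normalized\np{\LevelSet{\lzero}{k}}}$ appearing in~\eqref{eq:lower_bound_concave_program} is, by the conjugate identity~\eqref{eq:conjugate_delta_l0norm}, exactly the $2$-$k$-symmetric gauge norm $\SymmetricGaugeNorm{k}{\cdot}$. Since this norm is finite-valued, the Moreau lower addition~$\LowPlus$ collapses to ordinary subtraction, and substituting into~\eqref{eq:lower_bound_concave_program} produces~\eqref{eq:duality_sparse_concave} verbatim, its right-hand side being the maximization of a usc concave function.

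Next, under the additional properness assumption, I would obtain the equality~\eqref{eq:duality_sparse_convex} by applying Corollary~\ref{cor:lower_bound_concave_program_convex_program} to the same data $\theta = \normalized$, $\Uncertain = \LevelSet{\lzero}{k}$, with $\PRIMAL = \DUAL = \RR^d$. Its two hypotheses are quickly checked: the first, weak boundedness of $-\normalized\np{\LevelSet{\lzero}{k}}$, i.e.\ $\barriercone\bp{-\normalized\np{\LevelSet{\lzero}{k}}} = \RR^d$, holds because $\sigma_{-\normalized\np{\LevelSet{\lzero}{k}}} = \SymmetricGaugeNorm{k}{\cdot}$ is finite on all of~$\RR^d$ (equivalently, $\normalized\np{\LevelSet{\lzero}{k}}$ is contained in the Euclidian unit ball, hence bounded); the second, properness of $\LFM{\bp{\InfimalPostcomposition{\normalized}{\fonctionprimal}}}$, is precisely the theorem's hypothesis. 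Corollary~\ref{cor:lower_bound_concave_program_convex_program} then rewrites the concave program as the convex minimization $\min_{\primal \in \closedconvexhull\np{-\normalized\np{\LevelSet{\lzero}{k}}}} \LFMbi{\bp{\InfimalPostcomposition{\normalized}{\fonctionprimal}}}\np{\primal}$.

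The crux, and the step I expect to be the main obstacle, is then to identify the constraint set $\closedconvexhull\np{-\normalized\np{\LevelSet{\lzero}{k}}}$ with the unit ball $\defset{\primal \in \RR^d}{\SupportNorm{k}{\primal} \leq 1}$ of the $k$-support norm. Here I would argue through support functions. On the one hand, since a support function is unchanged by passing to the closed convex hull, the identity~\eqref{eq:conjugate_delta_l0norm} gives $\sigma_{\closedconvexhull\np{-\normalized\np{\LevelSet{\lzero}{k}}}} = \sigma_{-\normalized\np{\LevelSet{\lzero}{k}}} = \SymmetricGaugeNorm{k}{\cdot}$. On the other hand, any norm is the support function of the unit ball of its dual norm; and, by the very Definition~\eqref{eq:support_norm}, the $k$-support norm $\SupportNorm{k}{\cdot}$ is the dual norm of $\SymmetricGaugeNorm{k}{\cdot}$, so that $\SymmetricGaugeNorm{k}{\cdot}$ is the support function of the set $\defset{\primal \in \RR^d}{\SupportNorm{k}{\primal} \leq 1}$. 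As two closed convex sets that share the same support function must coincide, the two sets are equal, and~\eqref{eq:duality_sparse_convex} follows, its primal problem being the minimization of the lsc convex function $\LFMbi{\bp{\InfimalPostcomposition{\normalized}{\fonctionprimal}}}$ over the $k$-support-norm unit ball.
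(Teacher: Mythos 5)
Your proof is correct. Its first half coincides with the paper's argument: the paper derives \eqref{eq:duality_sparse_concave} from Inequality~\eqref{eq:dual_problem_inequality_constraints} combined with the conjugate formulas \eqref{eq:CAPRA_Fenchel-Moreau_conjugate} and \eqref{eq:conjugate_delta_l0norm}, which is exactly what your appeal to Proposition~\ref{pr:lower_bound_concave_program} unpacks to, since that proposition is the instantiation of \eqref{eq:dual_problem_inequality_constraints} for one-sided linear couplings (one slip of wording: the maximization of a usc concave function is the \emph{left}-hand side of \eqref{eq:duality_sparse_concave}, not its right-hand side). Where you genuinely diverge is in the equality of \eqref{eq:duality_sparse_convex}. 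The paper applies the Fenchel--Rockafellar equality of \cite[Prop.~15.13]{Bauschke-Combettes:2017} directly to the pair formed by \( \LFM{ \bp{\InfimalPostcomposition{\normalized}{\fonctionprimal}} } \) and the norm \( \SymmetricGaugeNorm{k}{\cdot} \); since the Fenchel conjugate of \( \SymmetricGaugeNorm{k}{\cdot} \) is the characteristic function of the unit ball of its dual norm \( \SupportNorm{k}{\cdot} \) (Definition~\ref{de:symmetric_gauge_norm}), the minimum over the \(k\)-support ball appears in one stroke, with no need to describe that ball as a closed convex hull. You instead invoke Corollary~\ref{cor:lower_bound_concave_program_convex_program} --- itself proved by the very same Fenchel--Rockafellar theorem under the very same qualification condition, which you verify correctly --- and this leaves you with a minimum over \( \closedconvexhull\bp{ -\normalized\np{\LevelSet{\lzero}{k}} } \), so you need the additional step of identifying this set with the \(k\)-support unit ball. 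You carry that step out soundly: both sets are closed and convex, they have the same support function by \eqref{eq:one-sided_linear_Fenchel-Moreau_characteristic}, \eqref{eq:conjugate_delta_l0norm} and \eqref{eq:norm_dual_norm=support_of_polar_balls_Hilbert}, and two closed convex sets sharing a support function coincide. The two routes thus rest on identical pillars; yours is slightly longer, but it buys two things the paper leaves implicit: it exhibits Theorem~\ref{th:duality_sparse_convex} as a pure specialization of the general machinery of Section~\ref{One-sided_linear_couplings_and_lower_bound_convex_programs}, and it makes explicit the geometric identity \( \closedconvexhull\bp{ \normalized\np{\LevelSet{\lzero}{k}} } = \BALL_{\SupportNorm{k}{\cdot}} \), whereas the paper's direct conjugation of the norm bypasses any explicit description of the constraint set.
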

\end{subequations}

\begin{proof}
From the Inequality~\eqref{eq:dual_problem_inequality_constraints},
where we use the expression~\eqref{eq:CAPRA_Fenchel-Moreau_conjugate} 
for \( \SFM{\fonctionprimal}{\couplingCAPRA} \)
and the expression~\eqref{eq:conjugate_delta_l0norm}
for \( \SFM{ \delta_{ \LevelSet{\lzero}{k} } }{-\couplingCAPRA} \),
we deduce Inequality~\eqref{eq:duality_sparse_concave}.
Because the norm~\( \SymmetricGaugeNorm{k}{\cdot} \) 
is convex lsc and has full effective domain~$\RR^d$,
and because the convex lsc function
\( \LFM{ \bp{\InfimalPostcomposition{\normalized}{\fonctionprimal}} } \)
is proper, we deduce that 
\( \continuitypoints\np{\SymmetricGaugeNorm{k}{\cdot}} \cap 
\dom\bp{\LFM{ 
      \np{\InfimalPostcomposition{\normalized}{\fonctionprimal}}}}
= 
\dom\bp{\LFM{ 
      \np{\InfimalPostcomposition{\normalized}{\fonctionprimal}}}} 
\not= \emptyset \). 
Thus, the conditions for a Fenchel-Rockafellar equality are satisfied 
\cite[Prop.~15.13]{Bauschke-Combettes:2017} 
and we obtain that 
\begin{equation*}
  \min_{ \SupportNorm{k}{\primal} \leq 1} 
  \LFMbi{ \bp{\InfimalPostcomposition{\normalized}{\fonctionprimal}} } 
  % \LFMbi{ \bp{ \hat{ \fonctionprimal } \UppPlus \delta_{ \SPHERE \cup \{0\} } } }
  \np{\primal} 
  = \sup_{\dual \in \RR^d} 
  \Bp{ -\LFM{ 
      \bp{\InfimalPostcomposition{\normalized}{\fonctionprimal}} 
      % \bp{ \hat{ \fonctionprimal } \UppPlus \delta_{ \SPHERE \cup \{0\} } }
    }\np{\dual}
    - \SymmetricGaugeNorm{k}{\dual} }
\eqfinp
\end{equation*}
This equation, combined with Equation~\eqref{eq:duality_sparse_concave}, 
gives Equation~\eqref{eq:duality_sparse_convex}.
This ends the proof.
\end{proof}

%\subsection{The case of least squares regression sparse optimization problem}

As an application, we consider 
the least squares regression sparse optimization problem.

\begin{proposition}
  Letting $\matrice$ be a matrix with $d$~rows and $p$ columns,
  and $z \in \RR^p$, we have
  \begin{multline}
      \norm{z}^2 +
    \sup_{\dual \in \RR^d}
    \Bp{
      -\Bc{
        \sup_{\primal \in \SPHERE} 
        \bp{ \proscal{\primal}{\dual}
          + \frac{ \proscal{z}{\matrice\primal}^2 }{ \norm{\matrice\primal}^2 } 
          {\mathbb I}_{\proscal{z}{\matrice\primal} > 0}}}_{+} 
      - \SymmetricGaugeNorm{k}{\dual} }
\\
=    \norm{z}^2 +
    \min_{ \SupportNorm{k}{\primal} \leq 1} 
    \LFMbi{ \Bp{ 
        -\frac{ \proscal{z}{\matrice~\cdot}^2 }{ \norm{\matrice\cdot}^2 }
        {\mathbb I}_{\proscal{z}{\matrice~\cdot} > 0}
        \UppPlus \delta_{\SPHERE} } }\np{\primal}
\leq 
    \inf_{ \lzero\np{\primal} \leq k }  \norm{z-\matrice\primal}^2 
    \eqfinp 
\label{eq:least_squares_regression_sparse_optimization_problem}
   \end{multline}
\end{proposition}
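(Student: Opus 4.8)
The plan is to apply Theorem~\ref{th:duality_sparse_convex} to the least squares criterion $\fonctionprimal\np{\primal}=\norm{z-\matrice\primal}^2$, for which $\inf_{\lzero\np{\primal}\leq k}\fonctionprimal\np{\primal}$ is exactly the right hand side of~\eqref{eq:least_squares_regression_sparse_optimization_problem}, and then to make the two ingredients $\LFM{\bp{\InfimalPostcomposition{\normalized}{\fonctionprimal}}}$ and $\LFMbi{\bp{\InfimalPostcomposition{\normalized}{\fonctionprimal}}}$ explicit. First I would compute the infimal postcomposition from its expression~\eqref{eq:CAPRA_InfimalPostcomposition}: for $\primal\in\SPHERE$ this is the scalar minimization $\inf_{\lambda>0}\norm{z-\lambda\matrice\primal}^2$ of the quadratic $\lambda\mapsto\norm{z}^2-2\lambda\proscal{z}{\matrice\primal}+\lambda^2\norm{\matrice\primal}^2$, whose minimizer over $\lambda>0$ is $\proscal{z}{\matrice\primal}/\norm{\matrice\primal}^2$ when $\proscal{z}{\matrice\primal}>0$, the quadratic being nondecreasing on $\np{0,+\infty}$ otherwise. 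Hence $\bp{\InfimalPostcomposition{\normalized}{\fonctionprimal}}\np{\primal}=\norm{z}^2-\proscal{z}{\matrice\primal}^2\norm{\matrice\primal}^{-2}{\mathbb I}_{\proscal{z}{\matrice\primal}>0}$ for $\primal\in\SPHERE$, with value $\norm{z}^2$ at $\primal=0$ and $+\infty$ off $\SPHERE\cup\na{0}$.

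Next I would take the Fenchel conjugate, factoring out the additive constant $\norm{z}^2$, which shifts the conjugate by $-\norm{z}^2$. The supremum defining the conjugate splits over the domain $\SPHERE\cup\na{0}$: the point $\primal=0$ contributes $0$, and $\primal\in\SPHERE$ contributes $\sup_{\primal\in\SPHERE}\bp{\proscal{\primal}{\dual}+\proscal{z}{\matrice\primal}^2\norm{\matrice\primal}^{-2}{\mathbb I}_{\proscal{z}{\matrice\primal}>0}}$; keeping the larger of the two produces the positive part $\Bc{\,\cdot\,}_{+}$ seen in~\eqref{eq:least_squares_regression_sparse_optimization_problem}. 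Thus $-\LFM{\bp{\InfimalPostcomposition{\normalized}{\fonctionprimal}}}\np{\dual}$ equals $\norm{z}^2$ minus that bracketed quantity, and substitution into~\eqref{eq:duality_sparse_concave} yields the concave lower bound in the first line.

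For the primal program, I would first verify the properness assumption of Theorem~\ref{th:duality_sparse_convex}: by Cauchy--Schwarz $\proscal{z}{\matrice\primal}^2\leq\norm{z}^2\norm{\matrice\primal}^2$, so the ratio is bounded by $\norm{z}^2$ on $\SPHERE$, and since $\SPHERE$ is compact the conjugate is finite everywhere, hence proper. Theorem~\ref{th:duality_sparse_convex} then delivers the alternative expression $\min_{\SupportNorm{k}{\primal}\leq 1}\LFMbi{\bp{\InfimalPostcomposition{\normalized}{\fonctionprimal}}}\np{\primal}$, and factoring out $\norm{z}^2$ once more turns the biconjugate into $\norm{z}^2+\LFMbi{g}$, where $g=-\proscal{z}{\matrice\cdot}^2\norm{\matrice\cdot}^{-2}{\mathbb I}_{\proscal{z}{\matrice\cdot}>0}\UppPlus\delta_{\SPHERE}$ is the function in the second line of~\eqref{eq:least_squares_regression_sparse_optimization_problem}.

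The one point needing care, and the main obstacle, is that the infimal postcomposition lives on $\SPHERE\cup\na{0}$ whereas $g$ carries $\delta_{\SPHERE}$, so the two functions disagree at $\primal=0$; identifying their biconjugates requires showing their Fenchel conjugates coincide. I would argue that, for every $\dual$, pairing some $\primal\in\SPHERE$ with its antipode $-\primal$ gives $\sup_{\primal\in\SPHERE}\bp{\proscal{\primal}{\dual}+\proscal{z}{\matrice\primal}^2\norm{\matrice\primal}^{-2}{\mathbb I}_{\proscal{z}{\matrice\primal}>0}}\geq 0$, since one of $\proscal{\pm\primal}{\dual}$ is nonnegative and the ratio term is always nonnegative. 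Hence the positive part is inactive and the value at $\primal=0$ does not lower the conjugate, so $\LFM{g}$ matches the conjugate of the infimal postcomposition up to the constant $-\norm{z}^2$, giving $\LFMbi{\bp{\InfimalPostcomposition{\normalized}{\fonctionprimal}}}=\norm{z}^2+\LFMbi{g}$. Combining this equality of primal programs with the concave lower bound from~\eqref{eq:duality_sparse_concave} yields the full chain~\eqref{eq:least_squares_regression_sparse_optimization_problem}.
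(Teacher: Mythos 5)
Your proof is correct and follows essentially the same route as the paper's: compute \( \inf_{\lambda>0}\fonctionprimal\np{\lambda\primal} \), derive the conjugate formula for \( \LFM{\bp{\InfimalPostcomposition{\normalized}{\fonctionprimal}}} \) with the positive part coming from the point \( \primal=0 \), and apply Theorem~\ref{th:duality_sparse_convex}. Your final step --- showing the positive part is inactive because the supremum over \( \SPHERE \) is nonnegative (antipodal pairing), so that \( \LFMbi{\bp{\InfimalPostcomposition{\normalized}{\fonctionprimal}}} \) coincides with \( \norm{z}^2 \) plus the biconjugate of the function carrying \( \delta_{\SPHERE} \) rather than \( \delta_{\SPHERE\cup\na{0}} \) --- is precisely the identification that the paper's proof leaves implicit, and you justify it correctly.
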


\begin{subequations}
\begin{proof}
Let $\fonctionprimal$ be the function defined by 
\( \fonctionprimal\np{\primal} = \norm{z-\matrice\primal}^2 \),
for all \( \primal\in\RR^d \). 
A straightforward calculation gives %that for all \( \primal\in\RR^d \)
  \begin{align}
    \inf_{\lambda > 0} \fonctionprimal\np{\lambda\primal}
    % \hat{ \fonctionprimal }\np{\primal} 
    &= 
      \norm{z}^2 
      - \frac{ \proscal{z}{\matrice\primal}^2 }{ \norm{\matrice\primal}^2 }
      {\mathbb I}_{\proscal{z}{\matrice\primal} > 0} 
\eqsepv \forall \primal\in\RR^d 
\eqfinp
  \end{align}
Therefore, using~\eqref{eq:CAPRA_InfimalPostcomposition}, 
we obtain that, for all \( \dual\in\RR^d \), 
\begin{equation}
  \LFM{\bp{\InfimalPostcomposition{\normalized}{\fonctionprimal}}}\np{\dual}
  = 
    \sup_{\primal \in \SPHERE \cup \{0\} } \Bp{ \proscal{\primal}{\dual}
      -\inf_{\lambda > 0} \fonctionprimal\np{\lambda\primal} }
=\Bc{
    \sup_{\primal \in \SPHERE} 
    \bp{ \proscal{\primal}{\dual}
      + \frac{ \proscal{z}{\matrice\primal}^2 }{ \norm{\matrice\primal}^2 } {\mathbb I}_{\proscal{z}{\matrice\primal} > 0}}}_{+} 
  - \norm{z}^2
  \eqfinp 
    \label{eq:nfA}
\end{equation}
Then, inserting the expression~\eqref{eq:nfA} 
of $\LFM{\bp{\InfimalPostcomposition{\normalized}{\fonctionprimal}}}$ 
in Inequality~\eqref{eq:duality_sparse_concave}
yields the first part of
Equation~\eqref{eq:least_squares_regression_sparse_optimization_problem},
namely
  \begin{equation*}
    \norm{z}^2 +
    \sup_{\dual \in \RR^d}
    \Bp{
      -\Bc{
        \sup_{\primal \in \SPHERE} 
        \bp{ \proscal{\primal}{\dual}
          + \frac{ \proscal{z}{\matrice\primal}^2 }{ \norm{\matrice\primal}^2 } 
          {\mathbb I}_{\proscal{z}{\matrice\primal} > 0}}}_{+} 
      - \SymmetricGaugeNorm{k}{\dual} }
    \leq
    \inf_{ \lzero\np{\primal} \leq k }  \norm{z-\matrice\primal}^2 
    \label{eq:lbA}
    \eqfinp 
  \end{equation*}
Now, since the function
$\LFM{\bp{\InfimalPostcomposition{\normalized}{\fonctionprimal}}}$ is 
easily seen to be proper by~\eqref{eq:nfA},
we can use Theorem~\ref{th:duality_sparse_convex}
and Equation~\eqref{eq:duality_sparse_convex} gives
\begin{align*}
  \min_{ \SupportNorm{k}{\primal} \leq 1} 
  \LFMr{ \bgp{
  \Bc{
  \sup_{\primal \in \SPHERE} 
  \bp{ \proscal{\primal}{\dual}
  + \frac{ \proscal{z}{\matrice\primal}^2 }{ \norm{\matrice\primal}^2 } {\mathbb
  I}_{\proscal{z}{\matrice\primal} > 0}} }_{+} }
  } - \norm{z}^2 \tag{by~\eqref{eq:nfA}}
  &=
    \min_{ \SupportNorm{k}{\primal} \leq 1} 
    \LFMbi{ \bp{\InfimalPostcomposition{\normalized}{\fonctionprimal}} } 
    % \LFMbi{ \bp{ \hat{ \fonctionprimal } \UppPlus \delta_{ \SPHERE \cup \{0\} } } }
    \np{\primal}  \\
  &=
  \sup_{\dual \in \RR^d} 
  \Bp{ -\LFM{ 
      \bp{\InfimalPostcomposition{\normalized}{\fonctionprimal}} 
      % \bp{ \hat{ \fonctionprimal } \UppPlus \delta_{ \SPHERE \cup \{0\} } }
    }\np{\dual}
    - \SymmetricGaugeNorm{k}{\dual} } \\
  &\leq 
  \inf_{ \lzero\np{\primal} \leq k } \fonctionprimal\np{\primal} \tag{by~\eqref{eq:duality_sparse_convex}}
  \eqfinv
\end{align*}
which is the second part of 
Equation~\eqref{eq:least_squares_regression_sparse_optimization_problem}.
This ends the proof.
\end{proof}
\end{subequations}

\section{Lower bound convex programs for generalized sparse optimization}
\label{One-sided_convex_couplings_and_generalized_sparse_optimization}

In~\S\ref{Lower_bound_convex_programs_for_GSO}
we formally define 
generalized sparse optimization (GSO)
as optimization over the union of a finite family of subsets.
Then, we provide lower bound convex minimization programs
over the unit ball of some norms.
In~\S\ref{Design_of_norms_for_GSO}, 
we present a systematic way to design such norms,
with results on how to build up a (global) norm 
from (local) sets or norms.
Finally, in~\S\ref{Design_of_lower_bound_convex_programs_for_GSO},
we wrap up the results from the both previous subsections and
provide our main result.
Thus doing, we hint at how we encompass
most of the sparsity inducing norms used in machine learning.

\subsection{Lower bound convex programs for generalized sparse optimization}
\label{Lower_bound_convex_programs_for_GSO}

\renewcommand{\UNCERTAIN}{\mathbb{W}}
\renewcommand{\Uncertain}{W}
\renewcommand{\uncertain}{w}
\renewcommand{\PRIMAL}{\HILBERT}
\renewcommand{\DUAL}{\HILBERT}
\renewcommand{\Primal}{\Hilbert}
\renewcommand{\Dual}{\Hilbert'}
\renewcommand{\primal}{\hilbert}
\renewcommand{\dual}{\hilbert'}
\renewcommand{\PRIMAL}{{\mathbb X}}
\renewcommand{\Primal}{X}
\renewcommand{\primal}{x}
\renewcommand{\DUAL}{{\mathbb Y}}
\renewcommand{\Dual}{Y}
\renewcommand{\dual}{y}

Let $\UNCERTAIN$ be a set, let $\SCENARIO$ be a finite set and let
\(\sequence{\Uncertain_{\scenario}}{\scenario\in\SCENARIO}\)
be a family of subsets of~$\UNCERTAIN$.
This family captures \emph{sparsity}, 
where the finite set~$\SCENARIO$ of indices %is finite and 
reflects the combinatorial nature of the optimization problem.

For any function \( \fonctionuncertain : \UNCERTAIN \to \barRR \),
the \emph{generalized sparse optimization (GSO)} problem is\footnote{%
The function \( \fonctionuncertain : \UNCERTAIN \to \barRR \)
needs only be known on \( \dom\fonctionuncertain \cap 
\Bp{\bigcup_{\scenario\in\SCENARIO} \Uncertain_{\scenario}} \).}
\begin{equation}
  \inf_{\uncertain \in \bigcup_{\scenario\in\SCENARIO} \Uncertain_{\scenario}} 
\fonctionuncertain\np{\uncertain} 
%= \inf_{\scenario\in\SCENARIO} \inf_{\uncertain_{\scenario} \in \Uncertain_{\scenario}} 
% \fonctionuncertain\np{\uncertain_{\scenario}} 
\eqfinp
\label{eq:GSO_problem}
\end{equation}
% , we can suppose that \( \dom\fonctionuncertain \subset 
% \bigcup_{\scenario\in\SCENARIO} \Uncertain_{\scenario} \) . 

As the problem~\eqref{eq:GSO_problem} is a special case
of~\eqref{eq:optimization} --- with constraint given by the belonging 
of possible solutions to the set
\( \Uncertain =
\bigcup_{\scenario\in\SCENARIO} \Uncertain_{\scenario} \) ---
the following Proposition is a straightforward application of
Corollary~\ref{cor:lower_bound_concave_program_convex_program}.

\begin{proposition}
Let $\PRIMAL=\DUAL$ be a Hilbert space.
Let $\SCENARIO$ be a finite set.
Let $\UNCERTAIN$ be a set.
\begin{enumerate}
\item 
Let \(\sequence{\Uncertain_{\scenario}}{\scenario\in\SCENARIO}\)
be a family of subsets of~$\UNCERTAIN$.
\item 
Let \( \sequence{\theta_{\scenario}}{\scenario\in\SCENARIO} \)
be a family of mappings
\( \theta_{\scenario} : \Uncertain_{\scenario} \to \PRIMAL \)
such that 
\begin{enumerate}
  \item 
the family \( \sequence{\theta_{\scenario}}{\scenario\in\SCENARIO} \)
is compatible with the family 
\(\sequence{\Uncertain_{\scenario}}{\scenario\in\SCENARIO}\),
in the sense that\\
\( %\begin{equation*}
\uncertain \in \Uncertain_{\scenario} \cap \Uncertain_{\scenariobis}
\Rightarrow 
\theta_{\scenario}\np{\uncertain} 
= \theta_{\scenariobis}\np{\uncertain} 
\eqsepv \forall \np{\scenario,\scenariobis} \in\SCENARIO^2 
%\eqfinv
%\label{eq:compatible_families}
\), %\end{equation*}
\label{it:convex_lower_bound_GSO_compatible_families}
\item 
every set \( -\theta_{\scenario}\np{\Uncertain_{\scenario}} \) is weakly bounded,
for every \( \scenario\in\SCENARIO \). % that is, 
% \begin{equation}
% \barriercone\bp{-\theta_{\scenario}\np{\Uncertain_{\scenario}}} = \DUAL 
% \eqfinv   
% \label{eq:convex_lower_bound_GSO_weakly_bounded}
% \end{equation}
\label{it:convex_lower_bound_GSO_weakly_bounded}
\end{enumerate}
\item 
Let \( \fonctionuncertain : \UNCERTAIN \to \barRR \) be a function
such that 
every function 
$\LFM{ \bp{ \InfimalPostcomposition{\theta_{\scenario}}{\fonctionuncertain}}}$ 
is proper, for every \( \scenario\in\SCENARIO \), 
and 
  \( \bigcap_{\scenario\in\SCENARIO} \dom 
  \LFM{ \bp{ \InfimalPostcomposition{\theta_{\scenario}}{\fonctionuncertain} } } 
  \not = \emptyset \).
\label{it:convex_lower_bound_GSO_function}
\end{enumerate}
  Then, we have the lower bound 
  \begin{equation}
    \min_{ \primal \in 
      \closedconvexhull\bp{ -\bigcup_{\scenario\in\SCENARIO} 
        \theta_{\scenario}\np{\Uncertain_{\scenario}} } }
    \LFM{ \Bp{ \sup_{\scenario\in\SCENARIO}
        \LFM{ \bp{ \InfimalPostcomposition{\theta_{\scenario}}{\fonctionuncertain} } } } }
    \np{\primal} 
    \leq 
    \inf_{\uncertain \in \bigcup_{\scenario\in\SCENARIO} \Uncertain_{\scenario}} 
    \fonctionuncertain\np{\uncertain}
    \eqfinp 
\label{eq:convex_lower_bound_GSO}
  \end{equation}
  \label{pr:convex_lower_bound_GSO}
\end{proposition}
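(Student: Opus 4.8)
The plan is to reduce Proposition~\ref{pr:convex_lower_bound_GSO} to the already-established Corollary~\ref{cor:lower_bound_concave_program_convex_program} by treating the GSO problem~\eqref{eq:GSO_problem} as an instance of the generic problem~\eqref{eq:optimization} with constraint set \( \Uncertain = \bigcup_{\scenario\in\SCENARIO} \Uncertain_{\scenario} \). First I would construct a single \emph{global} mapping \( \theta : \Uncertain \to \PRIMAL \) out of the family \( \sequence{\theta_{\scenario}}{\scenario\in\SCENARIO} \). The key observation is that the compatibility condition~\eqref{it:convex_lower_bound_GSO_compatible_families} is exactly what guarantees that the prescription \( \theta\np{\uncertain} = \theta_{\scenario}\np{\uncertain} \) whenever \( \uncertain \in \Uncertain_{\scenario} \) is \emph{well defined}: if \( \uncertain \) lies in two sets \( \Uncertain_{\scenario} \cap \Uncertain_{\scenariobis} \), the two candidate values agree, so \( \theta \) is an unambiguous single-valued mapping on the union \( \Uncertain \). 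This is the conceptual heart of the argument and, I expect, the main (though modest) obstacle: everything hinges on verifying that the hypotheses of the corollary transfer correctly under this gluing.

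Once \( \theta \) is in hand, I would check the two numbered hypotheses of Corollary~\ref{cor:lower_bound_concave_program_convex_program} for the pair \( (\Uncertain, \theta) \). For the weak-boundedness hypothesis, note that \( -\theta\np{\Uncertain} = \bigcup_{\scenario\in\SCENARIO} \bp{ -\theta_{\scenario}\np{\Uncertain_{\scenario}} } \), a \emph{finite} union of weakly bounded sets by~\eqref{it:convex_lower_bound_GSO_weakly_bounded}; invoking the additivity of barrier cones over finite unions (equation~\eqref{eq:barrier_cone_of_a_finite_union}, already used at the close of the proof of the corollary) gives \( \barriercone\bp{-\theta\np{\Uncertain}} = \DUAL \), so the set is weakly bounded. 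For the properness hypothesis, I would identify \( \LFM{ \bp{ \InfimalPostcomposition{\theta}{\fonctionuncertain} } } \). Because \( \theta \) restricts to \( \theta_{\scenario} \) on each \( \Uncertain_{\scenario} \), the infimal postcomposition~\eqref{eq:InfimalPostcomposition} through the glued map satisfies \( \bp{\InfimalPostcomposition{\theta}{\fonctionuncertain}} = \inf_{\scenario\in\SCENARIO} \bp{\InfimalPostcomposition{\theta_{\scenario}}{\fonctionuncertain}} \) (a pointwise infimum over the finitely many branches), and dualizing a finite infimum turns it into the pointwise supremum \( \LFM{ \bp{ \InfimalPostcomposition{\theta}{\fonctionuncertain} } } = \sup_{\scenario\in\SCENARIO} \LFM{ \bp{ \InfimalPostcomposition{\theta_{\scenario}}{\fonctionuncertain} } } \), since Fenchel conjugation exchanges infima and suprema.

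With this identification the hypothesis~\eqref{it:convex_lower_bound_GSO_function} reads precisely that this supremum is a proper function: each \( \LFM{ \bp{ \InfimalPostcomposition{\theta_{\scenario}}{\fonctionuncertain} } } \) is proper (so the supremum is nowhere \( -\infty \), hence bounded below by a proper affine minorant and never identically \( +\infty \) on its domain), and the nonempty-intersection condition \( \bigcap_{\scenario\in\SCENARIO} \dom \LFM{ \bp{ \InfimalPostcomposition{\theta_{\scenario}}{\fonctionuncertain} } } \not= \emptyset \) guarantees \( \dom \bp{ \sup_{\scenario\in\SCENARIO} \LFM{ \bp{ \InfimalPostcomposition{\theta_{\scenario}}{\fonctionuncertain} } } } \not= \emptyset \), so the supremum is proper. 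Thus both hypotheses of the corollary hold for \( (\Uncertain, \theta) \).

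Finally I would simply invoke the conclusion~\eqref{eq:lower_bound_convex_program} of Corollary~\ref{cor:lower_bound_concave_program_convex_program} and substitute the two identities obtained above: \( \closedconvexhull\np{-\theta\np{\Uncertain}} = \closedconvexhull\bp{ -\bigcup_{\scenario\in\SCENARIO} \theta_{\scenario}\np{\Uncertain_{\scenario}} } \) for the feasible set, and \( \LFMbi{ \bp{ \InfimalPostcomposition{\theta}{\fonctionuncertain} } } = \LFM{ \Bp{ \sup_{\scenario\in\SCENARIO} \LFM{ \bp{ \InfimalPostcomposition{\theta_{\scenario}}{\fonctionuncertain} } } } } \) for the objective (the biconjugate being the reverse conjugate of the conjugate, matched against the supremum formula just derived). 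These substitutions turn~\eqref{eq:lower_bound_convex_program} verbatim into the claimed inequality~\eqref{eq:convex_lower_bound_GSO}, completing the proof. I anticipate the only genuinely delicate bookkeeping is confirming the finite-infimum-to-supremum dualization and that properness survives the gluing; both are routine once the compatibility hypothesis has been used to make \( \theta \) well defined.
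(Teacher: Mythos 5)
Your proposal is correct and follows essentially the same route as the paper's proof: glue the mappings into a single $\theta$ via the compatibility condition, deduce weak boundedness of $-\theta\np{\Uncertain}$ from the finite-union property of barrier cones, identify $\LFM{ \bp{ \InfimalPostcomposition{\theta}{\fonctionuncertain} } } = \sup_{\scenario\in\SCENARIO} \LFM{ \bp{ \InfimalPostcomposition{\theta_{\scenario}}{\fonctionuncertain} } }$ since conjugation turns the finite infimum into a supremum, verify properness from the hypotheses, and invoke Corollary~\ref{cor:lower_bound_concave_program_convex_program}. Your expanded justification of properness (nowhere $-\infty$ plus a common point of finiteness of the domains) is exactly what the paper leaves implicit when it cites its third hypothesis.
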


\begin{proof}
  By item~\ref{it:convex_lower_bound_GSO_compatible_families},
%the compatibility condition~\eqref{eq:compatible_families}, 
we can define the mapping 
  \begin{equation*} 
    \theta : 
 \bigcup_{\scenario\in\SCENARIO} \Uncertain_{\scenario} \to \PRIMAL 
    \mtext{ by }
    \uncertain \in \Uncertain_{\scenario} \Rightarrow 
    \theta\np{\uncertain}=\theta_{\scenario}\np{\uncertain}
    \eqfinp
  \end{equation*} 

  By item~\ref{it:convex_lower_bound_GSO_weakly_bounded},
as every set \( -\theta_{\scenario}\np{\Uncertain_{\scenario}} \) 
is weakly bounded, for every \( \scenario\in\SCENARIO \),
 the finite union 
\( \bigcup_{\scenario\in\SCENARIO} -\theta_{\scenario}\np{\Uncertain_{\scenario}} 
= 
-\theta\bp{\bigcup_{\scenario\in\SCENARIO} \Uncertain_{\scenario} } \) 
is weakly bounded,
by item~\ref{it:barrier_cone_of_a_finite_union}
in Proposition~\ref{pr:barrier_cone}.
\medskip

From the definition~\eqref{eq:InfimalPostcomposition} 
of the infimal postcomposition, we get that 
\begin{equation*}
\bp{\InfimalPostcomposition{\theta}{\fonctionuncertain}}\np{\primal} 
  =  
\inf \defset{ \fonctionuncertain\np{\uncertain'} }%
      { \uncertain' \in \UNCERTAIN, \, 
\exists \scenario\in\SCENARIO, \, 
      \theta_{\scenario}\np{\uncertain'}=\primal }
= 
\inf_{\scenario\in\SCENARIO}
\bp{\InfimalPostcomposition{\theta_{\scenario}}{\fonctionuncertain}}\np{\primal} 
\eqfinp 
\end{equation*}
Therefore, $\LFM{\bp{\InfimalPostcomposition{\theta}{\fonctionuncertain}}} = \sup_{\scenario\in\SCENARIO} 
  \LFM{\bp{\InfimalPostcomposition{\theta_{\scenario}}{\fonctionuncertain}}}$,
as conjugacies, being dualities, turn infima into suprema.
  By item~\ref{it:convex_lower_bound_GSO_function},
the mapping $\LFM{\bp{\InfimalPostcomposition{\theta}{\fonctionuncertain}}}$ is
  proper.

To conclude, we apply Corollary~\ref{cor:lower_bound_concave_program_convex_program},
with \( \Uncertain =\bigcup_{\scenario\in\SCENARIO} \Uncertain_{\scenario} \)
and \( \closedconvexhull\bp{-\theta\np{\Uncertain}} =
\closedconvexhull\Bp{\bigcup_{\scenario\in\SCENARIO} -\theta\np{\Uncertain_{\scenario}}}
= \closedconvexhull\bp{ -\bigcup_{\scenario\in\SCENARIO} 
        \theta_{\scenario}\np{\Uncertain_{\scenario}} } \). 
\end{proof}

Going on, we provide conditions under which 
the lower bound~\eqref{eq:convex_lower_bound_GSO}
is a convex minimization program over the unit ball
of a norm (that will be detailed in~\S\ref{Design_of_norms_for_GSO}).

\begin{proposition}
Let $\PRIMAL=\DUAL$ be a Hilbert space.
Let $\SCENARIO$ be a finite set.
Let $\UNCERTAIN$ be a set.

\begin{enumerate}
\item 
Let \(\sequence{\Uncertain_{\scenario}}{\scenario\in\SCENARIO}\)
be a family of two by two disjoint subsets of~$\UNCERTAIN$.
\item 
Let \( \sequence{\theta_{\scenario}}{\scenario\in\SCENARIO} \)
be a family of mappings
\( \theta_{\scenario} : \Uncertain_{\scenario} \to \PRIMAL \),
such that 
\begin{enumerate}
\item 
the following joint full sum %orthogonality 
condition is satisfied
\begin{equation}
\sum_{\scenario\in\SCENARIO} 
\linearspan\bp{\theta_{\scenario}\np{\Uncertain_{\scenario}}}
= \PRIMAL 
 \eqfinv
\end{equation}
\label{subit:convex_lower_bound_GSO_orthogonality_condition_SousEnsemble}
\item 
every subset
\( \theta_{\scenario}\np{\Uncertain_{\scenario}} \) of~$\PRIMAL$
is symmetric and weakly bounded, for every \( \scenario\in\SCENARIO \).
\label{subit:convex_lower_bound_GSO_symmetric_and_weakly_bounded}
\end{enumerate}
\item 
Let \( \fonctionuncertain : \UNCERTAIN \to \barRR \) be a function
such that 
every function 
$\LFM{ \bp{ \InfimalPostcomposition{\theta_{\scenario}}{\fonctionuncertain}}}$ 
is proper, for every \( \scenario\in\SCENARIO \), and
  \( \bigcap_{\scenario\in\SCENARIO} \dom 
  \LFM{ \bp{ \InfimalPostcomposition{\theta_{\scenario}}{\fonctionuncertain} } } 
  \not = \emptyset \).
\label{it:convex_lower_bound_GSO_norm_function}
\end{enumerate}
Then, there exists a norm~\(\triplenorm{\cdot}\),
with unit ball
\( \closedconvexhull\bp{ -\bigcup_{\scenario\in\SCENARIO} 
        \theta_{\scenario}\np{\Uncertain_{\scenario}} } \),
such that we have the lower bound 
\begin{equation}
      \min_{ \triplenorm{\primal} \leq 1 }
\LFM{ \Bp{ \sup_{\scenario\in\SCENARIO}
\LFM{ \bp{ \InfimalPostcomposition{\theta_{\scenario}}{\fonctionuncertain} } } } }
%\LFMbi{ \bp{ \InfimalPostcomposition{\theta}{\fonctionuncertain} } }
\np{\primal} 
    \leq 
\inf_{\uncertain \in \bigcup_{\scenario\in\SCENARIO} \Uncertain_{\scenario}} 
\fonctionuncertain\np{\uncertain} 
\eqfinp
%\label{eq:convex_lower_bound_GSO}
\end{equation}
\label{pr:convex_lower_bound_GSO_norm}
\end{proposition}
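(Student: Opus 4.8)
The plan is to reduce the statement to Proposition~\ref{pr:convex_lower_bound_GSO}, whose lower bound~\eqref{eq:convex_lower_bound_GSO} already has exactly the desired shape once its feasible set $\closedconvexhull\bp{-\bigcup_{\scenario\in\SCENARIO}\theta_{\scenario}\np{\Uncertain_{\scenario}}}$ is recognized as the closed unit ball of a norm. First, I would check that the hypotheses of Proposition~\ref{pr:convex_lower_bound_GSO} are met. Since the subsets $\sequence{\Uncertain_{\scenario}}{\scenario\in\SCENARIO}$ are two by two disjoint, the compatibility condition (item~\ref{it:convex_lower_bound_GSO_compatible_families} there) holds vacuously, as $\Uncertain_{\scenario}\cap\Uncertain_{\scenariobis}=\emptyset$ for $\scenario\neq\scenariobis$. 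The symmetry assumed in item~\ref{subit:convex_lower_bound_GSO_symmetric_and_weakly_bounded} gives $-\theta_{\scenario}\np{\Uncertain_{\scenario}}=\theta_{\scenario}\np{\Uncertain_{\scenario}}$, so the weak boundedness of $\theta_{\scenario}\np{\Uncertain_{\scenario}}$ is precisely the weak boundedness of $-\theta_{\scenario}\np{\Uncertain_{\scenario}}$ required there; and the properness and nonempty common domain condition on $\fonctionuncertain$ is imported verbatim. Thus Proposition~\ref{pr:convex_lower_bound_GSO} applies and yields the bound~\eqref{eq:convex_lower_bound_GSO} with feasible set $\Convex=\closedconvexhull\bp{-\bigcup_{\scenario\in\SCENARIO}\theta_{\scenario}\np{\Uncertain_{\scenario}}}$.

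Second, and this is the heart of the matter, I would show that $\Convex$ is the closed unit ball of a norm, taking for $\triplenorm{\cdot}$ the Minkowski gauge $\MinkowskiGauge{\Convex}\np{\primal}=\inf\defset{\lambda>0}{\primal\in\lambda\Convex}$. Four properties of $\Convex$ must be established. (i)~$\Convex$ is closed and convex by construction. (ii)~$\Convex$ is symmetric: since each $\theta_{\scenario}\np{\Uncertain_{\scenario}}$ is symmetric, the set $A=\bigcup_{\scenario\in\SCENARIO}\theta_{\scenario}\np{\Uncertain_{\scenario}}$ satisfies $A=-A$, and so does its closed convex hull; in particular $0\in\Convex$. (iii)~$\Convex$ is norm bounded: in the Hilbert space $\PRIMAL$, weak boundedness of a symmetric set forces norm boundedness by the uniform boundedness principle, so each $\theta_{\scenario}\np{\Uncertain_{\scenario}}$ is norm bounded, hence so is the finite union $A$ and its closed convex hull. (iv)~$\Convex$ is absorbing: here I would use the joint full sum condition~\eqref{subit:convex_lower_bound_GSO_orthogonality_condition_SousEnsemble}, which says $\linearspan\np{A}=\PRIMAL$ as a vector space. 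Any $\primal\in\PRIMAL$ is thus a finite linear combination $\primal=\sum_i c_i a_i$ with $a_i\in A$; using $A=-A$ to make each coefficient nonnegative and setting $\lambda=\sum_i\module{c_i}$, the point $\primal/\lambda$ is a convex combination of elements of $A$, hence lies in $\convexhull\np{A}\subset\Convex$, so $\primal\in\lambda\Convex$. Properties (i)--(iv) make $\MinkowskiGauge{\Convex}$ a genuine norm: positive homogeneity and subadditivity from convexity, absolute homogeneity from symmetry, finiteness from (iv), and positive definiteness from (iii) (if $\triplenorm{\primal}=0$ then $\norm{\primal}\leq\lambda\sup_{c\in\Convex}\norm{c}$ for every $\lambda>0$, forcing $\primal=0$). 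Finally, because $\Convex$ is closed, convex and contains $0$, the closed unit ball $\defset{\primal\in\PRIMAL}{\triplenorm{\primal}\leq1}$ coincides with $\Convex$ itself.

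To conclude, I would substitute the identity $\ba{\triplenorm{\primal}\leq1}=\Convex$ into the bound~\eqref{eq:convex_lower_bound_GSO} obtained in the first step, turning the minimization over $\Convex$ into the minimization over $\ba{\triplenorm{\primal}\leq1}$ and giving the announced inequality. The main obstacle I anticipate is step~(iv): one must be careful that the full sum condition is an \emph{algebraic} span equality, since in infinite dimensions a merely dense span would not make $\Convex$ absorbing. The finite linear combination argument above is exactly what exploits $\sum_{\scenario\in\SCENARIO}\linearspan\bp{\theta_{\scenario}\np{\Uncertain_{\scenario}}}=\PRIMAL$ rather than its closure, and it is the only place where boundedness (iii) and absorption (iv) must be reconciled: the resulting norm $\triplenorm{\cdot}$ dominates a multiple of the Hilbert norm but need not be equivalent to it, which is consistent precisely because $0$ need not lie in the Hilbert-interior of $\Convex$.
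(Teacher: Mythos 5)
Your proof is correct, and its first step (checking the three hypotheses of Proposition~\ref{pr:convex_lower_bound_GSO} and invoking the bound~\eqref{eq:convex_lower_bound_GSO}) is exactly the paper's. Where you genuinely diverge is in the second step, the existence of the norm. The paper does not construct it by hand: it invokes Proposition~\ref{pr:triplenorm_local_sets} with \( \Hilbert_{\scenario}=\theta_{\scenario}\np{\Uncertain_{\scenario}} \), which in turn verifies symmetry, \( \barriercone\BALL = \DUAL \) and \( \conicalhull\BALL = \PRIMAL \) for \( \BALL=\closedconvexhull\bp{\bigcup_{\scenario\in\SCENARIO}\Hilbert_{\scenario}} \) and then defines the norm via duality as the support function \( \sigma_{\BALL^{\odot}} \) of the polar set (Proposition~\ref{pr:norm_and_dual_norm_Hilbert}, ultimately resting on the bipolar identity \( \Convex^{\odot\odot}=\Convex \)). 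You instead take the Minkowski gauge \( \MinkowskiGauge{\Convex} \) of the same set and check the norm axioms directly: symmetry, absorption from the algebraic full-sum condition (your finite-linear-combination argument is the right way to exploit \( \linearspan \) rather than \( \closedspan \)), positive definiteness from norm boundedness, and the identification of \( \Convex \) with the closed unit ball from closedness. The two constructions of course produce the same norm. What your route buys is elementarity and self-containedness: no polar sets, no bipolar theorem. What it costs is an appeal to the uniform boundedness principle (weak boundedness \( \Rightarrow \) norm boundedness, valid here by completeness of the Hilbert space), a tool the paper never needs since it works throughout with weak boundedness alone; note you could avoid it too, since positive definiteness already follows from weak boundedness: if \( \MinkowskiGauge{\Convex}\np{\primal}=0 \) then \( \proscal{\primal}{\dual} \leq \lambda\,\sigma_{\Convex}\np{\dual} \) for all \( \lambda>0 \) and all \( \dual \), whence \( \norm{\primal}^2\leq 0 \). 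What the paper's route buys, on the other hand, is that it simultaneously produces the dual norm with unit ball \( \bp{\bigcup_{\scenario\in\SCENARIO}\Hilbert_{\scenario}}^{\odot} \), which is precisely the extra information exploited later in Proposition~\ref{pr:triplenorm_local_norms} and Theorem~\ref{th:main}; your gauge construction delivers only what the present statement requires.
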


\begin{proof}
First, we use Proposition~\ref{pr:convex_lower_bound_GSO}
to obtain the lower bound~\eqref{eq:convex_lower_bound_GSO}.
For this purpose, we check its three assumptions 
(item~\ref{it:convex_lower_bound_GSO_compatible_families},
item~\ref{it:convex_lower_bound_GSO_weakly_bounded},
item~\ref{it:convex_lower_bound_GSO_function}) one by one.
\begin{itemize}
\item 
 Because the family 
\(\sequence{\Uncertain_{\scenario}}{\scenario\in\SCENARIO}\)
is made of two by two disjoint subsets of~$\UNCERTAIN$,
the compatibility condition of
item~\ref{it:convex_lower_bound_GSO_compatible_families},
in Proposition~\ref{pr:convex_lower_bound_GSO}
is satisfied. 
\item 
As every subset
\( \theta_{\scenario}\np{\Uncertain_{\scenario}} \) of~$\PRIMAL$
is symmetric and weakly bounded for every \( \scenario\in\SCENARIO \),
by item~\ref{subit:convex_lower_bound_GSO_symmetric_and_weakly_bounded}
here, we deduce that 
item~\ref{it:convex_lower_bound_GSO_weakly_bounded}
of Proposition~\ref{pr:convex_lower_bound_GSO} is satisfied.
\item 
Item~\ref{it:convex_lower_bound_GSO_function}
of Proposition~\ref{pr:convex_lower_bound_GSO}
coincides with item~\ref{it:convex_lower_bound_GSO_norm_function} here.
\end{itemize}
Therefore, we obtain the lower bound~\eqref{eq:convex_lower_bound_GSO}.
\medskip

Second, there remains to prove that there exists a norm~\(\triplenorm{\cdot}\)
with unit ball
\( \closedconvexhull\bp{ -\bigcup_{\scenario\in\SCENARIO} 
        \theta_{\scenario}\np{\Uncertain_{\scenario}} } \).
Now, this is a straightforward application of
Theorem~\ref{pr:triplenorm_local_sets} below, with 
\( \Hilbert_{\scenario}=\theta_{\scenario}\np{\Uncertain_{\scenario}} \),
for every \( \scenario\in\SCENARIO \),
and by
item~\ref{subit:convex_lower_bound_GSO_orthogonality_condition_SousEnsemble}
of the second assumption of this Proposition.

This ends the proof.
\end{proof}

\subsection{Building up a (global) norm from (local) norms}
\label{Design_of_norms_for_GSO}

\renewcommand{\UNCERTAIN}{\HILBERT}
\renewcommand{\Uncertain}{\Hilbert}
\renewcommand{\uncertain}{\hilbert}
\renewcommand{\PRIMAL}{\UNCERTAIN}
\renewcommand{\DUAL}{\UNCERTAIN}
\renewcommand{\Primal}{\Uncertain}
\renewcommand{\Dual}{\Uncertain'}
\renewcommand{\primal}{\uncertain}
\renewcommand{\dual}{\uncertain'}

Proposition~\ref{pr:convex_lower_bound_GSO_norm} claims the existence of a norm.
Here, we show how we can obtain a global norm on a Hilbert space,
first from subsets in Proposition~\ref{pr:triplenorm_local_sets}, 
second from local norms defined on closed subspaces in 
Proposition~\ref{pr:triplenorm_local_norms}.

\begin{proposition}
Let $\HILBERT$ be a Hilbert space.
Let $\SCENARIO$ be a finite set.

Let \(\sequence{\Uncertain_{\scenario}}{\scenario\in\SCENARIO}\)
be a family of subsets of~$\UNCERTAIN$
that are all symmetric and weakly bounded, that is,
\begin{equation}
-\Hilbert_{\scenario}=\Hilbert_{\scenario}
\eqsepv
% \conicalhull\Hilbert_{\scenario}
% = \closedspan\Uncertain_{\scenario}   
% \eqsepv
%\closedspan\Uncertain_{\scenario} \subset 
\barriercone\Hilbert_{\scenario}=\HILBERT
 \eqsepv \forall \scenario\in\SCENARIO 
        \eqfinv
\label{eq:triplenorm_from_balls_local_hypothesis}
\end{equation}
and that jointly satisfy the full sum %orthogonality 
condition
\begin{equation}
\sum_{\scenario\in\SCENARIO} 
\linearspan\Uncertain_{\scenario}
= \HILBERT
 \eqfinp
  \label{eq:orthogonality_condition_SousEnsemble}
\end{equation}
   Then, there is a (unique) norm~\(\triplenorm{\cdot}\) on~$\PRIMAL$ 
    with unit ball \( \closedconvexhull\Bp{
        \bigcup_{\scenario\in\SCENARIO} \Hilbert_{\scenario} } \).
Moreover, the norm~\(\triplenorm{\cdot}\)
admis a dual norm~\(\triplenorm{\cdot}_\star\)
with unit ball \( \Bp{
 \bigcup_{\scenario\in\SCENARIO} \Hilbert_{\scenario} }^{\odot} \).
The norm~\(\triplenorm{\cdot}\)
and the dual norm~\(\triplenorm{\cdot}_\star\) are given by 
 \begin{subequations}
  \begin{align}
\triplenorm{\cdot} = \sigma_{ \bp{
        \bigcup_{\scenario\in\SCENARIO} \Hilbert_{\scenario} }^{\odot} }
&\mtext{ and } 
   \triplenorm{\cdot}_\star = 
% \sigma_{ \bigcup_{\scenario\in\SCENARIO} \Hilbert_{\scenario} } =
\sigma_{ \closedconvexhull\bp{
        \bigcup_{\scenario\in\SCENARIO} \Hilbert_{\scenario} } }
     \eqfinv
\label{eq:triplenorm_and_triplenorm_dual} 
\intertext{ and their respective unit balls are }
%\\[4mm]
\BALL_{\triplenorm{\cdot}} = \closedconvexhull\Bp{
        \bigcup_{\scenario\in\SCENARIO} \Hilbert_{\scenario} } 
&\mtext{ and }
\BALL_{\triplenorm{\cdot}_\star}= \bp{
        \bigcup_{\scenario\in\SCENARIO} \Hilbert_{\scenario} }^{\odot} 
     \eqfinp
\label{eq:triplenorm_and_triplenorm_dual_unit_balls} 
  \end{align}
\label{eq:triplenorm}
\end{subequations}
The topologies defined by the norm~\(\triplenorm{\cdot}\)
and by the dual norm~\(\triplenorm{\cdot}_\star\)
are both weaker (contain less open sets) 
than the Hilbertian topology.
  \label{pr:triplenorm_local_sets}
\end{proposition}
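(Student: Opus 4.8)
The plan is to recognize the candidate unit ball
$\Convex\defegal\closedconvexhull\bp{\bigcup_{\scenario\in\SCENARIO}\Hilbert_{\scenario}}$
as a closed, convex, symmetric, weakly bounded and absorbing subset of the Hilbert space~$\HILBERT$, and then to invoke the standard correspondence between such sets and norms through the Minkowski gauge $\MinkowskiGauge{\Convex}$. Concretely, I would set $\triplenorm{\cdot}\defegal\MinkowskiGauge{\Convex}$ and check, in turn, that (i)~$\MinkowskiGauge{\Convex}$ is subadditive (convexity of~$\Convex$), (ii)~it is absolutely homogeneous ($\Convex$ symmetric), (iii)~it is finite everywhere ($\Convex$ absorbing), (iv)~it is positive definite ($\Convex$ weakly bounded, hence containing no line), and (v)~its closed unit ball is exactly~$\Convex$ ($\Convex$ closed). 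Uniqueness is then immediate, since a norm is entirely determined by its closed unit ball.

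For the routine properties, closedness and convexity of~$\Convex$ hold by construction, while symmetry $-\Convex=\Convex$ follows from $-\Hilbert_{\scenario}=\Hilbert_{\scenario}$ in~\eqref{eq:triplenorm_from_balls_local_hypothesis}, as the union of symmetric sets is symmetric and the closed convex hull preserves symmetry. Weak boundedness of~$\Convex$ I would obtain from $\barriercone\Hilbert_{\scenario}=\HILBERT$ by first passing to the finite union—using that the barrier cone of a finite union is the intersection of the barrier cones, \eqref{eq:barrier_cone_of_a_finite_union}—and then noting that taking the closed convex hull leaves the support function, hence the barrier cone, unchanged. This yields $\barriercone\Convex=\HILBERT$, i.e.\ $\Convex$ is weakly bounded—equivalently norm bounded, by the uniform boundedness principle—so that no nonzero~$\hilbert$ satisfies $\RR\hilbert\subset\Convex$, which is exactly positive definiteness of the gauge.

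The crux is the absorbing property, which is where the joint full sum condition~\eqref{eq:orthogonality_condition_SousEnsemble} enters. For each~$\scenario$, since $\Hilbert_{\scenario}$ is symmetric and nonempty one has $0\in\convexhull\Hilbert_{\scenario}$, and therefore $\bigcup_{t>0}t\,\convexhull\Hilbert_{\scenario}=\linearspan\Hilbert_{\scenario}$ (a symmetric convex set containing the origin scales up to its own span). Given $\hilbert\in\HILBERT$, condition~\eqref{eq:orthogonality_condition_SousEnsemble} lets me write $\hilbert=\sum_{\scenario\in\SCENARIO}\hilbert_{\scenario}$ with $\hilbert_{\scenario}\in\linearspan\Hilbert_{\scenario}$, so that $\hilbert_{\scenario}\in t_{\scenario}\convexhull\Hilbert_{\scenario}\subset t_{\scenario}\Convex$ for some $t_{\scenario}>0$; summing over the \emph{finite} index set~$\SCENARIO$ and using convexity of~$\Convex$ gives $\hilbert\in\bp{\sum_{\scenario}t_{\scenario}}\Convex$, so $\Convex$ is absorbing. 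The finiteness of~$\SCENARIO$ is essential here, and is the one place where I would be careful.

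It then remains to produce the support-function formulas and the topological comparison. From the gauge/polar duality for a closed convex set containing the origin, $\triplenorm{\cdot}=\MinkowskiGauge{\Convex}=\sigma_{\Convex^{\odot}}$, and since the polar only sees the closed convex hull of a set (together with the origin, already present here) one has $\Convex^{\odot}=\bp{\bigcup_{\scenario\in\SCENARIO}\Hilbert_{\scenario}}^{\odot}$, which gives the first formula in~\eqref{eq:triplenorm_and_triplenorm_dual} and the dual unit ball in~\eqref{eq:triplenorm_and_triplenorm_dual_unit_balls}. The dual norm is the support function of the primal unit ball, $\triplenorm{\cdot}_\star=\sigma_{\Convex}=\sigma_{\closedconvexhull(\bigcup_{\scenario}\Hilbert_{\scenario})}$, and the bipolar theorem $\Convex^{\odot\odot}=\Convex$ identifies its unit ball as $\Convex^{\odot}$. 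Finally, for the topology: $\Convex$ being closed, convex and absorbing in a Banach space, Baire's theorem forces $0\in\interior\Convex$, so $\Convex$ contains a Hilbertian ball and $\triplenorm{\cdot}\leq c\norm{\cdot}$ for some $c>0$; hence the identity map from the Hilbertian topology is continuous and $\triplenorm{\cdot}$ defines a weaker topology. The same conclusion for $\triplenorm{\cdot}_\star$ follows by polarity, since $\Convex^{\odot}$ is simultaneously bounded ($\Convex$ absorbing) and a neighborhood of the origin ($\Convex$ bounded). I expect the symmetric-span/absorbing step and this last bounded-and-absorbing bookkeeping to be the only nontrivial points; the remaining verifications are standard convex analysis, for which I would cite~\cite{Bauschke-Combettes:2017}.
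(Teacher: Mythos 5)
Your proof is correct, and the hypotheses you verify are exactly the ones the paper verifies: symmetry of \( \Convex \defegal \closedconvexhull\bp{\bigcup_{\scenario\in\SCENARIO}\Hilbert_{\scenario}} \) from symmetry of the pieces, weak boundedness through the barrier-cone calculus \eqref{eq:barrier_cone_and_convexhull} and \eqref{eq:barrier_cone_of_a_finite_union}, and the absorbing property from the full sum condition~\eqref{eq:orthogonality_condition_SousEnsemble} (for a convex symmetric set containing the origin, being absorbing is the same as having full conical hull, which is what the paper checks via \( \conicalhull\BALL=\linearspan\BALL \)). Where you genuinely diverge is in the machinery built on top of these verifications. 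The paper never touches the Minkowski gauge: it feeds the three conditions into its dual-system result (Proposition~\ref{pr:norm_and_dual_norm_Hilbert}), which directly delivers that \( \sigma_{\Convex^{\odot}} \) is a norm with unit ball~\( \Convex \) and dual norm \( \sigma_{\Convex} \), and it gets the topological comparison from Proposition~\ref{pr:comparison_of_norms} (closed unit balls imply topologies weaker than the Hilbertian one). You instead construct the norm as the gauge \( \MinkowskiGauge{\Convex} \), check the norm axioms by hand, and only afterwards recover the support-function formulas~\eqref{eq:triplenorm_and_triplenorm_dual} via gauge--polar duality and the bipolar theorem; for the topology you invoke Baire category (a closed convex absorbing set in a Banach space is a neighborhood of~\(0\)) and the uniform boundedness principle (weakly bounded implies norm bounded). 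Your route is the more classical and self-contained one, but it uses completeness of the space twice, whereas the paper's route stays entirely inside its support-function/polar-set framework — which is precisely what lets the paper state the underlying facts at the level of general dual systems and keep ``weakly bounded'' as the working hypothesis instead of upgrading it to ``bounded''. One side effect of your UBP step is worth noting: once \( \Convex \) is known to be norm bounded as well as closed, both norms are in fact equivalent to the Hilbertian norm (Proposition~\ref{pr:equivalent_norms}), which is stronger than the ``weaker topology'' conclusion in the statement; this is not a flaw, just a different and slightly stronger endpoint than the paper's.
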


\begin{proof}
We prove that the closed convex set 
\begin{equation}
  \BALL = \closedconvexhull\bp{
        \bigcup_{\scenario\in\SCENARIO} \Hilbert_{\scenario} } 
      \label{eq:triplenorm_unit_ball}
\end{equation}
satisfies the following conditions of 
Proposition~\ref{pr:norm_and_dual_norm_Hilbert}, namely 
\begin{equation*}
  -\BALL=\BALL 
\eqsepv
\barriercone\BALL = \DUAL 
\eqsepv 
\conicalhull\BALL= \PRIMAL  % \Primal^\perp=\{0\} 
\eqfinp
%\label{eq:conditions_on_Primal_support_function=norm_BALL}
\end{equation*}
It is clear that \( -\BALL=\BALL \) since 
\( -\Hilbert_{\scenario}=\Hilbert_{\scenario} \) for all 
\( \scenario\in\SCENARIO\)
 by~\eqref{eq:triplenorm_from_balls_local_hypothesis}.

We show that \( \barriercone\BALL = \DUAL \):
\begin{align*}
  \barriercone\BALL 
&= 
\barriercone\Bp{\closedconvexhull\bp{
        \bigcup_{\scenario\in\SCENARIO} \Hilbert_{\scenario} } }
\tag{ by~\eqref{eq:triplenorm_unit_ball} }
\\
&= 
\barriercone\bp{
        \bigcup_{\scenario\in\SCENARIO} \Hilbert_{\scenario} }
\tag{ by~\eqref{eq:barrier_cone_and_convexhull} }
\\
&=
\bigcap_{\scenario\in\SCENARIO} \barriercone\Hilbert_{\scenario}
\tag{ by~\eqref{eq:barrier_cone_of_a_finite_union} }
\\
&=
\bigcap_{\scenario\in\SCENARIO} \PRIMAL 
\tag{ by~\eqref{eq:triplenorm_from_balls_local_hypothesis} }
\\
&= \PRIMAL
\eqfinp 
\end{align*}

There remains to show that \( \conicalhull\BALL= \PRIMAL \):
\begin{align*}
   \conicalhull\BALL
&= 
\linearspan\BALL
\tag{ by~\cite[Prop.~6.4]{Bauschke-Combettes:2017} 
as $\BALL$ is nonempty convex and symmetric}
\\
&= 
\linearspan\Bp{ \closedconvexhull\bp{
        \bigcup_{\scenario\in\SCENARIO} \Hilbert_{\scenario} } }
\tag{ by~\eqref{eq:triplenorm_unit_ball} }
\\
& \supset
\linearspan\Bp{ \convexhull\bp{
        \bigcup_{\scenario\in\SCENARIO} \Hilbert_{\scenario} } }
\\
&= 
\linearspan\bp{ 
\bigcup_{\scenario\in\SCENARIO} \Hilbert_{\scenario} }
\\
&= 
\sum_{\scenario\in\SCENARIO} \linearspan\Hilbert_{\scenario}
\\
&=
\PRIMAL
\tag{ by~\eqref{eq:orthogonality_condition_SousEnsemble} }
\eqfinp
\end{align*}

We have proved that the closed convex set~$\BALL$
in~\eqref{eq:triplenorm_unit_ball} satisfies the conditions
%conditions~\eqref{eq:conditions_on_Primal_support_function=norm_BALL}
of Proposition~\ref{pr:norm_and_dual_norm_Hilbert}.
We conclude that 
\( \sigma_{\BALL^{\odot}} \) is a norm~\(\triplenorm{\cdot}\) on~$\PRIMAL$ 
    with unit ball~$\BALL$, 
and that it admits the dual norm 
\( \triplenorm{\cdot}_{\star} = \sigma_{\BALL} \),
    with unit ball~$\BALL^{\odot}$.
This gives~\eqref{eq:triplenorm}.

In addition, the topologies defined by the norm~\(\triplenorm{\cdot}\)
and by the dual norm~\(\triplenorm{\cdot}_\star\)
are both weaker (contain less open sets) 
than the Hilbertian topology,
by Proposition~\ref{pr:comparison_of_norms} since both unit balls
$\BALL$ and $\BALL^{\odot}$ are closed by construction.

This ends the proof. 
\end{proof}

Here, we show how we can obtain a global norm on a Hilbert space,
from local norms defined on closed subspaces.
With this formulation, we are able to give expressions 
of the global norm and of its dual norm as 
convolution and supremum of local norms and dual norms.

\begin{proposition}
Let $\HILBERT$ be a Hilbert space.
Let $\SCENARIO$ be a finite set.
\begin{itemize}
\item 
Let \(\sequence{\Subspace_{\scenario}}{\scenario\in\SCENARIO}\)
be a family of closed subspaces %of~$\PRIMAL$
of the Hilbert space~$\PRIMAL$, will full sum, that is, 
such that 
  \begin{equation}
\sum_{\scenario\in\SCENARIO}
\Subspace_{\scenario} = \PRIMAL
    \eqfinp
    \label{eq:orthogonality_condition}
  \end{equation}
\item 
Let \( \sequence{\triplenorm{\cdot}_{\scenario}}{\scenario\in\SCENARIO} \)
be a family of (local) norms on the closed
subspaces~\(\sequence{\Subspace_{\scenario}}{\scenario\in\SCENARIO}\),
such that, for every \( \scenario\in\SCENARIO \),
the norm~$\triplenorm{\cdot}_{\scenario}$ 
is equivalent to the restriction to~$\Subspace_{\scenario}$
of the Hilbertian norm~\( \norm{\cdot} \) on~$\PRIMAL$.
We define, for every \( \scenario\in\SCENARIO \),
the (local) unit ball 
\begin{equation}
        \BALL_{\scenario} 
 =
        \defset{\primal\in\Subspace_{\scenario}}%
        {\triplenorm{\primal}_{\scenario} \leq  1}
        \subset \Subspace_{\scenario}
 \eqsepv \forall \scenario\in\SCENARIO
\eqfinv
\label{eq:(local)_unit_balls}
\end{equation}
\end{itemize}
Then, there is a (unique) norm~\(\triplenorm{\cdot}\) on~$\PRIMAL$ 
    with unit ball \( \closedconvexhull\Bp{
        \bigcup_{\scenario\in\SCENARIO} \BALL_{\scenario} } \)
and it 
admis a dual norm~\(\triplenorm{\cdot}_\star\)
with unit ball \( \Bp{
 \bigcup_{\scenario\in\SCENARIO} \BALL_{\scenario} }^{\odot} \).
Moreover, the norm~\(\triplenorm{\cdot}\) 
and the dual norm~\(\triplenorm{\cdot}_\star\)
are equivalent to the Hilbertian norm~\( \norm{\cdot} \),
and have the following expressions. 
\begin{enumerate}
\item 
The norm~\(\triplenorm{\cdot}\) 
can be expressed as a convolution of the local norms
\( \sequence{\triplenorm{\cdot}_{\scenario}}{\scenario\in\SCENARIO} \):
\begin{subequations}
  \begin{align}
    \triplenorm{\cdot}  
    &= 
      \bigbox_{\scenario\in\SCENARIO} \bp{\triplenorm{\cdot}_{\scenario} 
      + \delta_{\HILBERT_\scenario}}
    % \bp{ \triplenorm{\cdot}_{\scenario} \circ \pi_{\scenario} }
      \eqfinv
    \\      
    \triplenorm{\primal} 
    &=
      \inf_{ \primal^{\scenario} \in \Subspace_{\scenario}, \sum_{\scenario\in\SCENARIO}\primal^{\scenario}=\primal }
      \sum_{\scenario\in\SCENARIO} \triplenorm{ \primal^{\scenario} }_{\scenario}
      \eqsepv \forall \primal\in\PRIMAL
      \eqfinp
  \end{align}
  \label{eq:triplenorm_convolution}
\end{subequations}
\item 
For each \( \scenario\in\SCENARIO \),
the local norm~$\triplenorm{\cdot}_{\scenario}$ on~$\Subspace_{\scenario}$
admits a local dual norm~$\triplenorm{\cdot}_{\scenario,\star}$ 
on~$\Subspace_{\scenario}$, 
and the dual norm~\(\triplenorm{\cdot}_\star\), 
of the norm~\(\triplenorm{\cdot}\) 
can be expressed as a supremum of the local dual norms
\( \sequence{\triplenorm{\cdot}_{\scenario,\star}}{\scenario\in\SCENARIO} \):
  \begin{subequations}
    \begin{align}
       \triplenorm{\cdot}_\star 
      &=
      \sup_{\scenario\in\SCENARIO} 
% \bp{ \triplenorm{\cdot}_{\scenario,\star} + \delta_{\HILBERT_\scenario} }
 \bp{ \triplenorm{\cdot}_{\scenario,\star} \circ \pi_{\scenario} }
      \eqfinv
\\
\triplenorm{\dual}_\star 
&= 
     \sup_{\scenario\in\SCENARIO} 
      \triplenorm{\pi_{\scenario}\np{\dual}}_{\scenario,\star} 
%      \triplenorm{\dual_{\scenario} }_{\scenario,\star} 
\eqsepv \forall \dual \in \DUAL 
      \eqfinv
   \end{align}
      \label{eq:triplenorm_dual_supremum}
  \end{subequations}
where, for every \( \scenario\in\SCENARIO \),
we introduce the orthogonal projection mapping
onto the closed subspace~\( \Subspace_{\scenario} \)
\begin{equation}
  \pi_{\scenario}: \PRIMAL \to 
\Subspace_{\scenario}
\text{\quad such that \quad} 
\pi_{\scenario}\np{\primal} \in \Subspace_{\scenario}
\eqsepv 
\primal-\pi_{\scenario}\np{\primal} \perp
\Subspace_{\scenario}
\eqsepv 
 \forall \primal \in \PRIMAL 
\eqfinp
\label{eq:orthogonal_projection_mapping}
\end{equation}
\end{enumerate}
\label{pr:triplenorm_local_norms}
\end{proposition}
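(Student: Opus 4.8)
The plan is to obtain the whole statement as a consequence of the set version, Proposition~\ref{pr:triplenorm_local_sets}, applied to the family of local unit balls, and then to read off the two variational formulas by conjugate duality. First I would set $\Hilbert_{\scenario}=\BALL_{\scenario}$ for every $\scenario\in\SCENARIO$ and verify the three hypotheses of Proposition~\ref{pr:triplenorm_local_sets}. Each $\BALL_{\scenario}$ is symmetric as the unit ball of a norm, so $-\BALL_{\scenario}=\BALL_{\scenario}$. Since $\triplenorm{\cdot}_{\scenario}$ is equivalent to the restriction of~$\norm{\cdot}$ to~$\Subspace_{\scenario}$, there is $a_{\scenario}>0$ with $a_{\scenario}\norm{\primal}\leq\triplenorm{\primal}_{\scenario}$ on~$\Subspace_{\scenario}$, whence $\BALL_{\scenario}\subset\defset{\primal}{\norm{\primal}\leq 1/a_{\scenario}}$ is Hilbert-bounded and $\barriercone\BALL_{\scenario}=\HILBERT$. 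Finally $\linearspan\BALL_{\scenario}=\Subspace_{\scenario}$, so the full sum condition~\eqref{eq:orthogonality_condition} gives $\sum_{\scenario}\linearspan\BALL_{\scenario}=\HILBERT$, which is the hypothesis~\eqref{eq:orthogonality_condition_SousEnsemble} of Proposition~\ref{pr:triplenorm_local_sets}. That proposition then produces the norm~$\triplenorm{\cdot}$ with unit ball $\closedconvexhull\bp{\bigcup_{\scenario}\BALL_{\scenario}}$ (unique, being determined by its unit ball), its dual norm $\triplenorm{\cdot}_{\star}=\sigma_{\closedconvexhull\bp{\bigcup_{\scenario}\BALL_{\scenario}}}$ with unit ball $\bp{\bigcup_{\scenario}\BALL_{\scenario}}^{\odot}$, and the statement on the topologies.

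Next I would establish equivalence to the Hilbertian norm. The lower bound is immediate: the unit ball $\BALL=\closedconvexhull\bp{\bigcup_{\scenario}\BALL_{\scenario}}$ is Hilbert-bounded, being the closed convex hull of the bounded union $\bigcup_{\scenario}\BALL_{\scenario}$, so $\BALL\subset\defset{\primal}{\norm{\primal}\leq R}$ and $\triplenorm{\cdot}\geq\norm{\cdot}/R$. For the upper bound I would exploit the full sum condition through the open mapping theorem: the bounded linear surjection $(\primal_{\scenario})_{\scenario}\mapsto\sum_{\scenario}\primal_{\scenario}$ from the product Hilbert space $\prod_{\scenario}\Subspace_{\scenario}$ onto~$\HILBERT$ is open, so every $\primal$ admits a decomposition $\primal=\sum_{\scenario}\primal_{\scenario}$ with $\primal_{\scenario}\in\Subspace_{\scenario}$ and $\sum_{\scenario}\norm{\primal_{\scenario}}\leq M\norm{\primal}$. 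Combined with the elementary inequality $\triplenorm{\primal}\leq\sum_{\scenario}\triplenorm{\primal_{\scenario}}_{\scenario}$ valid for any such decomposition (see below) and with $\triplenorm{\cdot}_{\scenario}\leq b_{\scenario}\norm{\cdot}$, this yields $\triplenorm{\primal}\leq C\norm{\primal}$. Hence $\triplenorm{\cdot}$ is equivalent to~$\norm{\cdot}$, and so is $\triplenorm{\cdot}_{\star}$ since the Hilbertian norm is self-dual.

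For the dual supremum formula~\eqref{eq:triplenorm_dual_supremum}, I would start from $\triplenorm{\cdot}_{\star}=\sigma_{\closedconvexhull\bp{\bigcup_{\scenario}\BALL_{\scenario}}}$ and use $\sigma_{\closedconvexhull C}=\sigma_{C}$ together with $\sigma_{\bigcup_{\scenario}\BALL_{\scenario}}=\sup_{\scenario}\sigma_{\BALL_{\scenario}}$. Because $\BALL_{\scenario}\subset\Subspace_{\scenario}$ and $\proscal{\primal}{\dual}=\proscal{\primal}{\pi_{\scenario}(\dual)}$ for $\primal\in\Subspace_{\scenario}$, each $\sigma_{\BALL_{\scenario}}(\dual)=\sup_{\primal\in\BALL_{\scenario}}\proscal{\primal}{\pi_{\scenario}(\dual)}$ is precisely the local dual norm $\triplenorm{\pi_{\scenario}(\dual)}_{\scenario,\star}$, giving~\eqref{eq:triplenorm_dual_supremum}. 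For the convolution formula~\eqref{eq:triplenorm_convolution}, write $g=\bigbox_{\scenario}\bp{\triplenorm{\cdot}_{\scenario}+\delta_{\Subspace_{\scenario}}}$. The bound $\triplenorm{\cdot}\leq g$ is elementary: from any decomposition $\primal=\sum_{\scenario}\primal_{\scenario}$, the point $\primal/\sum_{\scenario}\triplenorm{\primal_{\scenario}}_{\scenario}$ is a convex combination of elements of $\bigcup_{\scenario}\BALL_{\scenario}$, hence lies in~$\BALL$. For the reverse inequality I would argue by duality: the conjugate of an infimal convolution is the sum of the conjugates, and $\bp{\triplenorm{\cdot}_{\scenario}+\delta_{\Subspace_{\scenario}}}^{\star}$ is the indicator of $\defset{\dual}{\triplenorm{\pi_{\scenario}(\dual)}_{\scenario,\star}\leq 1}$, so that $g^{\star}$ is the indicator of $\bp{\bigcup_{\scenario}\BALL_{\scenario}}^{\odot}$, which by the previous step is exactly $\triplenorm{\cdot}^{\star}$; it then suffices to know that $g$ is lsc in order to conclude $g=g^{\star\star}=\triplenorm{\cdot}^{\star\star}=\triplenorm{\cdot}$.

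The delicate point, and the only place where infinite dimensionality genuinely intervenes, is the lower semicontinuity (equivalently, the exactness) of the infimal convolution~$g$. I expect to obtain it by a weak-compactness argument made available by the equivalence of the previous step: along a minimizing decomposition the coercivity $\sum_{\scenario}\triplenorm{\primal_{\scenario}}_{\scenario}\geq c\sum_{\scenario}\norm{\primal_{\scenario}}$ keeps each component bounded, and since the closed subspaces~$\Subspace_{\scenario}$ are weakly closed and the local norms are weakly lsc, weak subsequential limits furnish an admissible decomposition attaining the infimum. This simultaneously yields attainment and lower semicontinuity of~$g$, hence $g=\triplenorm{\cdot}$ and formula~\eqref{eq:triplenorm_convolution}. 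I regard this coercivity-plus-weak-compactness step as the main obstacle; the remaining manipulations are formal once it is in place.
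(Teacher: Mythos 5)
Your proposal is correct, and it matches the paper's proof in two of its four steps: like the paper, you obtain the existence of~\(\triplenorm{\cdot}\), of its dual norm, and the identification of both unit balls by applying Proposition~\ref{pr:triplenorm_local_sets} to the family of local unit balls (symmetry, boundedness hence weak boundedness from the local equivalence, and \( \linearspan\BALL_{\scenario}=\Subspace_{\scenario} \) so that \eqref{eq:orthogonality_condition} yields the full sum hypothesis~\eqref{eq:orthogonality_condition_SousEnsemble}); and your derivation of~\eqref{eq:triplenorm_dual_supremum} --- invariance of support functions under closed convex hulls, support function of a union as supremum of support functions, and the projection identity \( \proscal{\primal}{\dual}=\proscal{\primal}{\pi_{\scenario}\np{\dual}} \) for \( \primal\in\Subspace_{\scenario} \) --- is the paper's computation almost verbatim (one small omission: before writing \( \triplenorm{\pi_{\scenario}\np{\dual}}_{\scenario,\star} \) you should note that the local dual norms exist, which the paper gets from Proposition~\ref{pr:dual_norm_Hilbert} applied in each closed subspace~\( \Subspace_{\scenario} \); this is immediate from your hypotheses). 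You genuinely diverge in the two remaining steps. First, for the equivalence with the Hilbertian norm you invoke the open mapping theorem for the surjection \( \np{\primal^{\scenario}}_{\scenario\in\SCENARIO}\mapsto\sum_{\scenario\in\SCENARIO}\primal^{\scenario} \) of \( \prod_{\scenario\in\SCENARIO}\Subspace_{\scenario} \) onto~\( \HILBERT \), whereas the paper argues purely through unit balls via Proposition~\ref{pr:comparison_of_norms}: the ball of~\( \triplenorm{\cdot} \) is Hilbert-closed (so its topology is weaker than the Hilbertian one) and Hilbert-bounded (so it is stronger), and Proposition~\ref{pr:dual_norm_Hilbert} then transfers equivalence to the dual norm. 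Second, and more substantially, for the convolution formula~\eqref{eq:triplenorm_convolution} you work on the primal side: you prove by hand that \( g=\bigbox_{\scenario\in\SCENARIO}\bp{\triplenorm{\cdot}_{\scenario}+\delta_{\Subspace_{\scenario}}} \) is exact and lsc (coercivity \( \sum_{\scenario}\triplenorm{\primal^{\scenario}}_{\scenario}\geq c\sum_{\scenario}\norm{\primal^{\scenario}} \), weak sequential compactness of bounded sets, weak closedness of the~\( \Subspace_{\scenario} \), weak lsc of the convex lsc local norms), and then conclude by biconjugation from \( g^{\star}=\sum_{\scenario\in\SCENARIO}\bp{\triplenorm{\cdot}_{\scenario}+\delta_{\Subspace_{\scenario}}}^{\star}=\delta_{\bp{\bigcup_{\scenario\in\SCENARIO}\BALL_{\scenario}}^{\odot}} \), which is also the conjugate of~\( \triplenorm{\cdot} \). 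The paper goes the dual way: it writes \( \triplenorm{\cdot}=\sigma_{\BALL^{\odot}}=\LFM{\bp{\sum_{\scenario\in\SCENARIO}\delta_{\BALL_{\scenario}^{\odot}}}} \) and splits this conjugate of a sum into an \emph{exact} infimal convolution of conjugates by the interiority qualification \( 0\in\bigcap_{\scenario\in\SCENARIO}\interior\BALL_{\scenario}^{\odot} \) together with \cite[Prop.~15.3--15.5]{Bauschke-Combettes:2017}, so lower semicontinuity and exactness come packaged with the cited theorem rather than being proved directly. The trade-off: your route is more self-contained and exhibits attainment in~\eqref{eq:triplenorm_convolution} explicitly, at the price of importing the open mapping theorem and a weak-compactness extraction; the paper's route never leaves its support-function/polar-set calculus, at the price of resting on a Fenchel--Rockafellar-type qualification result. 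Neither is more general than the other, since both use completeness of~\( \HILBERT \) in an essential way.
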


\begin{proof}
First, 
we establish two useful properties of 
the local unit balls~\( \BALL_{\scenario} \) in~\eqref{eq:(local)_unit_balls}.
By assumption, for every \( \scenario\in\SCENARIO \),
$\triplenorm{\cdot}_{\scenario}$ is a norm on~$\Subspace_{\scenario}$
which is equivalent to the restriction to~$\Subspace_{\scenario}$
of the Hilbertian norm~\( \norm{\cdot} \) on~$\PRIMAL$.
Therefore, 
for every \( \scenario\in\SCENARIO \),
every local unit ball~\( \BALL_{\scenario} \) is 
\begin{itemize}
\item 
bounded (for the Hilbertian norm~\( \norm{\cdot} \)),
by Proposition~\ref{pr:comparison_of_norms} because
there exists $m_{\scenario}>0$ such that 
\( m_{\scenario}\norm{\cdot} \leq \triplenorm{\cdot}_{\scenario} \)
on~$\Subspace_{\scenario}$,
hence weakly bounded by~\eqref{eq:barrier_cone_of_a_bounded_set},
\item 
closed in~$\Subspace_{\scenario}$
(for the relative Hilbertian topology of~$\Subspace_{\scenario}$),
by Proposition~\ref{pr:comparison_of_norms}
because there exists $M_{\scenario}>0$ such that 
\( \triplenorm{\cdot}_{\scenario} \leq M_{\scenario}\norm{\cdot} \)
on~$\Subspace_{\scenario}$.
\end{itemize}

Second,
we prove that there is a (unique) norm~\(\triplenorm{\cdot}\) on~$\PRIMAL$ 
    with unit ball \( \closedconvexhull\bp{
        \bigcup_{\scenario\in\SCENARIO} \BALL_{\scenario} } \)
and it admis a dual norm~\(\triplenorm{\cdot}_\star\)
with unit ball \( \bp{
 \bigcup_{\scenario\in\SCENARIO} \BALL_{\scenario} }^{\odot} \).
For this purpose, it suffices to show that the family
\( \sequence{\BALL_{\scenario}}{\scenario\in\SCENARIO} \)
of local unit balls, defined in~\eqref{eq:(local)_unit_balls},
satisfies the assumptions of %Proposition~\ref{pr:triplenorm_local_norms}.
Proposition~\ref{pr:triplenorm_local_sets}.

Now, for every \( \scenario\in\SCENARIO \),
every local unit ball~$\BALL_{\scenario}$ is symmetric, 
and,
by~\eqref{eq:barrier_cone_of_a_bounded_set}, is also weakly bounded since it is bounded.
Thus, the assumptions~\eqref{eq:triplenorm_from_balls_local_hypothesis}
are satisfied.
There remains to prove the full sum %orthogonality 
condition~\eqref{eq:orthogonality_condition_SousEnsemble}.
But it follows from an easily proven property of a unit ball ---
namely, that \( \linearspan\BALL_{\scenario}=
\closedspan\BALL_{\scenario}=\Subspace_{\scenario}\), 
for every \( \scenario\in\SCENARIO \) --- from which we get
\( \sum_{\scenario\in\SCENARIO} \linearspan\BALL_{\scenario}
= \sum_{\scenario\in\SCENARIO} \Subspace_{\scenario}
= \PRIMAL \)
by~\eqref{eq:orthogonality_condition}.

Moreover, Proposition~\ref{pr:triplenorm_local_sets} establishes that 
the topologies defined by the norm~\(\triplenorm{\cdot}\)
and by the dual norm~\(\triplenorm{\cdot}_\star\)
are both weaker (contain less open sets) 
than the Hilbertian topology.
Yet, by Proposition~\ref{pr:comparison_of_norms}, 
the topology induced by the norm~\(\triplenorm{\cdot}\) 
is stronger than the Hilbertian topology,
because the unit ball~\( \BALL_{\triplenorm{\cdot}}=\closedconvexhull\bp{
        \bigcup_{\scenario\in\SCENARIO} \BALL_{\scenario} } \) is bounded,
as finite union of bounded local unit balls.
Therefore, the norm~\(\triplenorm{\cdot}\) 
is equivalent to the Hilbertian norm~\( \norm{\cdot} \).
We conclude this part with Proposition~\ref{pr:dual_norm_Hilbert}
that asserts that the dual norm~\(\triplenorm{\cdot}_\star\)
is then also equivalent to the Hilbertian norm~\( \norm{\cdot} \).
\medskip

Third, we prove the two items (but in reverse order).
  \begin{enumerate}
  \item[2.]

By Proposition~\ref{pr:dual_norm_Hilbert}
(applied on each of the Hilbert closed subspace~$\Subspace_{\scenario}$),
for each \( \scenario\in\SCENARIO \)
the local norm~$\triplenorm{\cdot}_{\scenario}$ on~$\Subspace_{\scenario}$
admits a local dual norm~$\triplenorm{\cdot}_{\scenario,\star}$ 
on~$\Subspace_{\scenario}$. 
Indeed, we have seen %at the beginning 
that every local unit ball \( \BALL_{\scenario} \) is 
weakly bounded, hence weakly bounded on~$\Subspace_{\scenario}$
by~\eqref{eq:barrier_cone_of_a_bounded_set},
and closed in~$\Subspace_{\scenario}$
(for the Hilbertian relative topology).

We prove~\eqref{eq:triplenorm_dual_supremum}.
For this purpose, 
let \( { \BALL_{\scenario}^{{\odot}_{\scenario}} }= 
\defset{\dual \in \DUAL_\scenario}{\proscal{\primal}{\dual} \leq 1 
      \eqsepv \forall \primal \in \BALL_{\scenario} } \)
denote the polar of the set
$\BALL_{\scenario}$ in $\HILBERT_\scenario$, as in~\eqref{eq:polar_set}. 
We have 
\begin{subequations}
\begin{align}
\triplenorm{\primal}_{\scenario}
&=\sigma_{ \BALL_{\scenario}^{{\odot}_{\scenario}} }\np{\primal} 
\eqsepv \forall \primal \in \Subspace_{\scenario} \eqfinv
\mtext{\qquad by~\eqref{eq:norm_dual_norm=support_of_polar_balls_Hilbert}, }
\label{eq:local_norm=support}
\\
\triplenorm{\dual}_{\scenario,\star}
&=\sigma_{ \BALL_{\scenario} }\np{\dual} 
\eqsepv \forall \dual \in \Subspace_{\scenario} \eqfinv
\mtext{\qquad by~\eqref{eq:norm_dual_norm=support_of_polar_balls_Hilbert}, }
\label{eq:local_dualnorm=support}
\\
    \BALL_{\scenario}^{{\odot}} 
    &=
      \BALL_{\scenario}^{{\odot}_{\scenario}} + \HILBERT_{\scenario}^{\perp} 
      \eqfinv
\mtext{\qquad as easily deduced from the definition~\eqref{eq:polar_set} of 
a polar set }
    \\
    \sigma_{\BALL_{\scenario}^{{\odot}}}
    &=
      \sigma_{ \BALL_{\scenario}^{{\odot}_{\scenario}} } \LowPlus
      \sigma_{ \HILBERT_{\scenario}^{\perp} } 
      = \triplenorm{\cdot}_{\scenario} % \circ \pi_{\scenario} 
      \LowPlus \delta_{ \HILBERT_{\scenario} }
      \eqfinv
      \label{eq:local_support_polar}
\end{align}
  \end{subequations}
by~\eqref{eq:local_norm=support} and
by a property of the support function of a vector space.

For all $\dual \in \DUAL$, we have
\begin{align}
  \sigma_{ \Hilbert_{\scenario} }\np{\dual}
  &=
    \sup_{\primal \in \Hilbert_{\scenario} } 
    \bp{ \proscal{\primal}{\pi_{\scenario}\np{\dual}} + 
    \proscal{\primal}{\dual-\pi_{\scenario}\np{\dual}} }
    \tag{ where the mapping~$\pi_{\scenario}$
is defined in~\eqref{eq:orthogonal_projection_mapping} }
\nonumber
  \\
  &=\sup_{\primal \in \Hilbert_{\scenario} } 
    \proscal{\primal}{\pi_{\scenario}\np{\dual}} 
    \tag{ by property of the orthogonal projection
    mapping~\eqref{eq:orthogonal_projection_mapping} 
since \( \Hilbert_{\scenario} \subset \Subspace_{\scenario} \)}
\nonumber
  \\
  &=
\sigma_{ \Hilbert_{\scenario} }\bp{ \pi_{\scenario}\np{\dual} } 
    \eqfinp
\label{eq:support_local_ball}
\end{align}
Now, we are ready to prove~\eqref{eq:triplenorm_dual_supremum}:
\begin{align*}
  \triplenorm{\cdot}_{\star} 
&= 
\sigma_{ \closedconvexhull\bp{
\bigcup_{\scenario\in\SCENARIO} \BALL_{\scenario} } }
\tag{ by~\eqref{eq:triplenorm_and_triplenorm_dual} }
\\
&= 
\sigma_{ \bigcup_{\scenario\in\SCENARIO} \BALL_{\scenario} }
\\
&= 
\sup_{\scenario\in\SCENARIO} \sigma_{ \BALL_{\scenario} }
\\
&= 
\sup_{\scenario\in\SCENARIO} \sigma_{ \BALL_{\scenario} } \circ \pi_{\scenario}
    \tag{ by~\eqref{eq:support_local_ball} }
\\
&= 
\sup_{\scenario\in\SCENARIO} 
\bp{ \triplenorm{\cdot}_{\scenario,\star} \circ \pi_{\scenario} }
 \tag{ by~\eqref{eq:local_dualnorm=support} }
    \eqfinp
\end{align*}

\item[1.]
We prove~\eqref{eq:triplenorm_convolution}.

For this purpose, we  start by showing that 
\( 0 \in \bigcap_{\scenario\in\SCENARIO} 
\continuitypoints\bp{\delta_{ \BALL_{\scenario}^{\odot} }} \).
We have proven at the beginning
that every local unit ball~\( \BALL_{\scenario} \) 
is bounded. Therefore, \( \bigcup_{\scenario\in\SCENARIO} \BALL_{\scenario} \)
is bounded because the set~$\SCENARIO$ is finite. 
Letting $M>0$ be such that 
\( \bigcup_{\scenario\in\SCENARIO} \BALL_{\scenario} \subset M
\BALL_{\norm{\cdot}} \),
we get that 
\begin{align*}
\bigcup_{\scenario\in\SCENARIO} \BALL_{\scenario} \subset M \BALL_{\norm{\cdot}} 
&\Rightarrow 
\bp{M \BALL_{\norm{\cdot}}}^{\odot} \subset 
\Bp{ \bigcup_{\scenario\in\SCENARIO} \BALL_{\scenario} }^{\odot} 
\\
&\Rightarrow 
\frac{1}{M}  \BALL_{\norm{\cdot}_\star} \subset 
\Bp{ \bigcup_{\scenario\in\SCENARIO} \BALL_{\scenario} }^{\odot} 
\tag{ by~\eqref{eq:norm_and dual_norm_unit_ball} 
and the definition~\eqref{eq:polar_set} of a polar set }
\\
&\Rightarrow \frac{1}{M} \BALL_{\norm{\cdot}}
 \subset 
\Bp{ \bigcup_{\scenario\in\SCENARIO} \BALL_{\scenario} }^{\odot} 
\tag{ because the dual norm~$\norm{\cdot}_\star$ of the Hilbertian norm
is the Hilbertian norm}
\\
&\Rightarrow 
0 \in \interior\Bp{ \bigcup_{\scenario\in\SCENARIO} \BALL_{\scenario} }^{\odot} 
=
\interior\bigcap_{\scenario\in\SCENARIO} \BALL_{\scenario}^{\odot} 
\eqfinv
  \end{align*}
where the interior is with respect to the Hilbertian topology.
Now, as it is always true that 
\( \interior\bigcap_{\scenario\in\SCENARIO} \BALL_{\scenario}^{\odot} 
\subset
\bigcap_{\scenario\in\SCENARIO} \interior\BALL_{\scenario}^{\odot} \),
we get that 
\( 0 \in \bigcap_{\scenario\in\SCENARIO} \interior\BALL_{\scenario}^{\odot} \).
Finally, it is easily seen that 
\(\continuitypoints\bp{\delta_{ \BALL_{\scenario}^{\odot} }} =
\interior\BALL_{\scenario}^{\odot} \), 
for every \( \scenario\in\SCENARIO \).
We conclude that 
\( 0 \in \bigcap_{\scenario\in\SCENARIO} 
\continuitypoints\bp{\delta_{ \BALL_{\scenario}^{\odot} }} \).

Now, we are ready to prove~\eqref{eq:triplenorm_convolution}:
\begin{align*}
  \triplenorm{\cdot} 
  &= 
    \sigma_{\BALL^{\odot}} 
    \tag{ by~\eqref{eq:triplenorm} and the
    definition~\eqref{eq:triplenorm_unit_ball} of~$\BALL$ }
  \\
  &= 
    \LFM{ \delta_{\BALL^{\odot}} }
  \\
  &= 
    \LFM{ \delta_{ \bigcap_{\scenario\in\SCENARIO} \BALL_{\scenario}^{\odot} } }
    \tag{ as \( \BALL^{\odot} = \Bp{ \closedconvexhull\bp{
    \bigcup_{\scenario\in\SCENARIO} \BALL_{\scenario} } }^{\odot} 
    = 
    \bp{ \bigcup_{\scenario\in\SCENARIO} \BALL_{\scenario} }^{\odot} 
    = 
    \bigcap_{\scenario\in\SCENARIO} \BALL_{\scenario}^{\odot} \) }
  \\
  &= 
    \LFM{ \Bp{ \sum_{\scenario\in\SCENARIO} 
    \delta_{ \BALL_{\scenario}^{\odot} } } } 
    \tag{ as \( 
\delta_{ \bigcap_{\scenario\in\SCENARIO} \BALL_{\scenario}^{\odot} }
= \sum_{\scenario\in\SCENARIO}\delta_{ \BALL_{\scenario}^{\odot} } \) }
  \\
  &=
    \bigbox_{\scenario\in\SCENARIO} 
    \LFM{ \delta_{ \BALL_{\scenario}^{\odot} } }
    \tag{ because \( 0 \in \bigcap_{\scenario\in\SCENARIO} \continuitypoints
    \bp{\delta_{ \BALL_{\scenario}^{\odot} }} \) as shown above
    and using~\cite[Prop.~15.3-15.5]{Bauschke-Combettes:2017} }
\\
  &=
    \bigbox_{\scenario\in\SCENARIO} 
    \sigma_{ { \BALL_{\scenario}^{\odot} } }
  \\
  &=
    \bigbox_{\scenario\in\SCENARIO} 
    \bp{ \triplenorm{\cdot}_{\scenario} + \delta_{\HILBERT_\scenario}}
    \tag{ by~\eqref{eq:local_support_polar} }
    \eqfinp
\end{align*}
  \end{enumerate}

This ends the proof. 
\end{proof}

\subsection{Design of norms for lower bound convex programs for GSO}
\label{Design_of_lower_bound_convex_programs_for_GSO}

\renewcommand{\UNCERTAIN}{\HILBERT}
\renewcommand{\Uncertain}{\Hilbert}
\renewcommand{\uncertain}{\hilbert}
\renewcommand{\PRIMAL}{\UNCERTAIN}
\renewcommand{\DUAL}{\UNCERTAIN}
\renewcommand{\Primal}{\Uncertain}
\renewcommand{\Dual}{\Uncertain'}
\renewcommand{\primal}{\uncertain}
\renewcommand{\dual}{\uncertain'}

\renewcommand{\Uncertain}{W}
\renewcommand{\uncertain}{w}

Finally, we consider a generalized sparse optimization problem
on a Hilbert space
and we present a systematic way to design norms, 
that mixes the formulations and results 
of Proposition~\ref{pr:convex_lower_bound_GSO_norm} 
and Proposition~\ref{pr:triplenorm_local_norms}. 
In what follows, \emph{sparsity} is captured by a finite family
\( \sequence{\Uncertain_{\scenario}}{\scenario\in\SCENARIO} \)
of subsets of a Hilbert space~$\UNCERTAIN$, 
whereas \emph{amplitude} is measured by a family
\( \sequence{\triplenorm{\cdot}_{\scenario}}{\scenario\in\SCENARIO} \)
of (local) norms on every
closed subspace~\( \closedspan\Uncertain_{\scenario} \).

\begin{theorem}
Let $\UNCERTAIN$ be a Hilbert space.
Let $\SCENARIO$ be a finite set.
\begin{enumerate}
\item 
Let \(\sequence{\Uncertain_{\scenario}}{\scenario\in\SCENARIO}\)
be a family of two by two disjoint symmetric subsets of~$\UNCERTAIN$
such that the closed subspaces
\begin{equation}
\Subspace_{\scenario} =\closedspan\Uncertain_{\scenario} 
  \eqsepv \forall \scenario\in\SCENARIO
   \label{eq:Subspace=closedspanUncertain}
\end{equation}
generate a full sum as follows
\begin{equation}
\sum_{\scenario\in\SCENARIO} 
\Subspace_{\scenario} 
= \UNCERTAIN 
 \eqfinp
\label{eq:orthogonality_condition_SousEnsemble_bis}
\end{equation}
\label{it:main_disjoint_symmetric_subsets}
\item 
Let \( \sequence{\triplenorm{\cdot}_{\scenario}}{\scenario\in\SCENARIO} \)
be a family of (local) norms
such that, for every \( \scenario\in\SCENARIO \),
$\triplenorm{\cdot}_{\scenario}$ is a norm on
\( \Subspace_{\scenario} \), 
which is equivalent to the restriction to~$\Subspace_{\scenario}$
of the Hilbertian norm~\( \norm{\cdot} \) on~$\PRIMAL$.
We denote the 
(local) unit balls and spheres by 
  \begin{subequations}
    \begin{align}
     \BALL_{\scenario} 
      &= 
        \defset{\primal\in\Subspace_{\scenario}}%
        {\triplenorm{\primal}_{\scenario} \leq  1}
        \subset \Subspace_{\scenario}
 \eqsepv \forall \scenario\in\SCENARIO
        \eqfinv
\label{eq:local_unit_ball}
       \\
      \SPHERE_{\scenario} 
      &= 
        \defset{\primal\in\Subspace_{\scenario}}%
        {\triplenorm{\primal}_{\scenario} = 1}
        \subset \Subspace_{\scenario}
 \eqsepv \forall \scenario\in\SCENARIO
\eqfinp
 \label{eq:local_unit_sphere}
    \end{align}
  \end{subequations}
\label{it:main_(local)_norms}
\item 
Let \( \sequence{\theta_{\scenario}}{\scenario\in\SCENARIO} \)
be a family of symmetric\footnote{%
that is, 
\( \theta_{\scenario}\np{-\uncertain}=\theta_{\scenario}\np{\uncertain} \),
for all \( \uncertain\in\Uncertain_{\scenario}\) and 
for every \( \scenario\in\SCENARIO \).
}
 mappings
\( \theta_{\scenario} : \Uncertain_{\scenario} \to \PRIMAL \),
such that 
\begin{equation}
 \SPHERE_{\scenario} \subset 
\overline{ \theta_{\scenario}\np{\Uncertain_{\scenario}} }
\subset \BALL_{\scenario} 
\eqfinp
 \label{eq:cor_triplenorm_SPHERE_BALL}
\end{equation}
% \begin{subequations}
%   \begin{align}
%   \SPHERE_{\scenario} 
% & \subset 
% \overline{ \theta_{\scenario}\np{\Uncertain_{\scenario}} }
% \eqfinv
% \label{eq:cor_triplenorm_SPHERE}
% \\
% \theta_{\scenario}\np{\Uncertain_{\scenario}} 
% & \subset 
% \BALL_{\scenario} 
% \eqfinv
% \label{eq:cor_triplenorm_BALL}
%   \end{align}
% \end{subequations}
\label{it:main_mappings}
\item 
Let \( \fonctionuncertain : \UNCERTAIN \to \barRR \) be a function
such that every function 
$\LFM{ \bp{ \InfimalPostcomposition{\theta_{\scenario}}{\fonctionuncertain}}}$ 
is proper, for every \( \scenario\in\SCENARIO \), and
  \( \bigcap_{\scenario\in\SCENARIO} \dom 
  \LFM{ \bp{ \InfimalPostcomposition{\theta_{\scenario}}{\fonctionuncertain} } } 
  \not = \emptyset \).
\label{it:main_function}
\end{enumerate}
Then, there exists a norm~\(\triplenorm{\cdot}\),
with unit ball 
\( \closedconvexhull\Bp{
\bigcup_{\scenario\in\SCENARIO} \BALL_{\scenario} } \)
such that we have the lower bound 
\begin{equation}
      \min_{ \triplenorm{\primal} \leq 1 }
\LFM{ \Bp{ \sup_{\scenario\in\SCENARIO}
\LFM{ \bp{ \InfimalPostcomposition{\theta_{\scenario}}{\fonctionuncertain} } } } }
%\LFMbi{ \bp{ \InfimalPostcomposition{\theta}{\fonctionuncertain} } }
\np{\primal} 
    \leq 
\inf_{\uncertain \in \bigcup_{\scenario\in\SCENARIO} \Uncertain_{\scenario}} 
\fonctionuncertain\np{\uncertain} 
\eqfinp
\end{equation}
Moreover, expressions for the norm~\(\triplenorm{\cdot}\)
and for its dual norm can be found in 
Proposition~\ref{pr:triplenorm_local_norms}.
\label{th:main}
\end{theorem}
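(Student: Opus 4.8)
The plan is to assemble the statement from two results already established: Proposition~\ref{pr:triplenorm_local_norms}, which manufactures the norm~$\triplenorm{\cdot}$ out of the local subspaces and local norms, and Proposition~\ref{pr:convex_lower_bound_GSO}, which produces the lower bound attached to the mappings~$\sequence{\theta_{\scenario}}{\scenario\in\SCENARIO}$. The only genuine work is to reconcile the two unit balls these results deliver, namely $\closedconvexhull\Bp{\bigcup_{\scenario\in\SCENARIO} \BALL_{\scenario}}$ on the norm side and $\closedconvexhull\Bp{-\bigcup_{\scenario\in\SCENARIO} \theta_{\scenario}\np{\Uncertain_{\scenario}}}$ on the duality side.

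First I would invoke Proposition~\ref{pr:triplenorm_local_norms}, whose hypotheses are exactly items~\ref{it:main_disjoint_symmetric_subsets} and~\ref{it:main_(local)_norms} here: the closed subspaces $\Subspace_{\scenario}=\closedspan\Uncertain_{\scenario}$ of~\eqref{eq:Subspace=closedspanUncertain} satisfy the full sum condition~\eqref{eq:orthogonality_condition_SousEnsemble_bis}, and each $\triplenorm{\cdot}_{\scenario}$ is a norm on~$\Subspace_{\scenario}$ equivalent to the restricted Hilbertian norm. This yields the norm~$\triplenorm{\cdot}$ with unit ball $\closedconvexhull\Bp{\bigcup_{\scenario\in\SCENARIO} \BALL_{\scenario}}$, together with the convolution and supremum formulas announced in the ``Moreover'' clause. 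Next I would apply Proposition~\ref{pr:convex_lower_bound_GSO} to the family $\sequence{\theta_{\scenario}}{\scenario\in\SCENARIO}$ and the function~$\fonctionuncertain$, checking its three hypotheses: compatibility (item~\ref{it:convex_lower_bound_GSO_compatible_families}) holds vacuously because the~$\Uncertain_{\scenario}$ are two by two disjoint; weak boundedness (item~\ref{it:convex_lower_bound_GSO_weakly_bounded}) follows from~\eqref{eq:cor_triplenorm_SPHERE_BALL}, since $\overline{\theta_{\scenario}\np{\Uncertain_{\scenario}}}\subset\BALL_{\scenario}$ and $-\BALL_{\scenario}=\BALL_{\scenario}$ force $-\theta_{\scenario}\np{\Uncertain_{\scenario}}$ into the Hilbertian-bounded ball~$\BALL_{\scenario}$; and the properness condition (item~\ref{it:convex_lower_bound_GSO_function}) is item~\ref{it:main_function} here. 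Proposition~\ref{pr:convex_lower_bound_GSO} then gives Inequality~\eqref{eq:convex_lower_bound_GSO}, i.e.\ that minimizing $\LFM{\Bp{\sup_{\scenario\in\SCENARIO}\LFM{\bp{\InfimalPostcomposition{\theta_{\scenario}}{\fonctionuncertain}}}}}$ over $\closedconvexhull\Bp{-\bigcup_{\scenario\in\SCENARIO}\theta_{\scenario}\np{\Uncertain_{\scenario}}}$ bounds $\inf_{\uncertain\in\bigcup_{\scenario\in\SCENARIO}\Uncertain_{\scenario}}\fonctionuncertain\np{\uncertain}$ from below.

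The crux, and the step I expect to be the main obstacle, is the identity
\begin{equation*}
\closedconvexhull\Bp{-\bigcup_{\scenario\in\SCENARIO}\theta_{\scenario}\np{\Uncertain_{\scenario}}}
=
\closedconvexhull\Bp{\bigcup_{\scenario\in\SCENARIO}\BALL_{\scenario}}
\eqfinp
\end{equation*}
Here I would use the elementary fact that, in a normed space, the closed convex hull of the unit sphere is the closed unit ball, so that $\closedconvexhull\SPHERE_{\scenario}=\BALL_{\scenario}$ inside each~$\Subspace_{\scenario}$ (every point of the open ball, and $0$, is a convex combination of two antipodal points of the sphere). Because $\SPHERE_{\scenario}$ and $\BALL_{\scenario}$ are symmetric, the sandwich~\eqref{eq:cor_triplenorm_SPHERE_BALL} also reads $\SPHERE_{\scenario}\subset\overline{-\theta_{\scenario}\np{\Uncertain_{\scenario}}}\subset\BALL_{\scenario}$. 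For the ``$\supset$'' containment, $\bigcup_{\scenario\in\SCENARIO}\BALL_{\scenario}=\bigcup_{\scenario\in\SCENARIO}\closedconvexhull\SPHERE_{\scenario}\subset\closedconvexhull\Bp{\bigcup_{\scenario\in\SCENARIO}\overline{-\theta_{\scenario}\np{\Uncertain_{\scenario}}}}=\closedconvexhull\Bp{-\bigcup_{\scenario\in\SCENARIO}\theta_{\scenario}\np{\Uncertain_{\scenario}}}$; for the ``$\subset$'' containment, $-\theta_{\scenario}\np{\Uncertain_{\scenario}}\subset\BALL_{\scenario}$ for each~$\scenario$.

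Substituting this identity into~\eqref{eq:convex_lower_bound_GSO} replaces the domain of the minimization by the unit ball $\ba{\primal : \triplenorm{\primal}\leq 1}$ of the norm built in the first step, which is exactly the announced lower bound; the expressions for~$\triplenorm{\cdot}$ and its dual are those furnished by Proposition~\ref{pr:triplenorm_local_norms}. I would stress that it is the symmetry of the \emph{norm} balls~$\BALL_{\scenario}$ and spheres~$\SPHERE_{\scenario}$ that lets the two-sided sandwich survive the negation $\theta_{\scenario}\mapsto-\theta_{\scenario}$; this is why the derivation can bypass Proposition~\ref{pr:convex_lower_bound_GSO_norm}, whose symmetric-image requirement on $\theta_{\scenario}\np{\Uncertain_{\scenario}}$ is not granted by the present hypotheses, and route instead through Proposition~\ref{pr:convex_lower_bound_GSO} for the bound and Proposition~\ref{pr:triplenorm_local_norms} for the norm.
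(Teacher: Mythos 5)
Your proof is correct and follows essentially the same route as the paper's own proof: it likewise checks the three hypotheses of Proposition~\ref{pr:convex_lower_bound_GSO}, identifies \( \closedconvexhull\Bp{-\bigcup_{\scenario\in\SCENARIO}\theta_{\scenario}\np{\Uncertain_{\scenario}}} \) with \( \closedconvexhull\Bp{\bigcup_{\scenario\in\SCENARIO}\BALL_{\scenario}} \) via the sandwich~\eqref{eq:cor_triplenorm_SPHERE_BALL} together with \( \convexhull\SPHERE_{\scenario}=\BALL_{\scenario} \), and invokes Proposition~\ref{pr:triplenorm_local_norms} to produce the norm. The only divergence is how the minus sign is absorbed: the paper asserts symmetry of each image \( \theta_{\scenario}\np{\Uncertain_{\scenario}} \) (deduced from the symmetry of \( \Uncertain_{\scenario} \) and of \( \theta_{\scenario} \)), whereas you use the automatic symmetry of \( \BALL_{\scenario} \) and \( \SPHERE_{\scenario} \) so that the sandwich survives negation --- a minor variation, and arguably the more robust one, since a mapping that is symmetric in the sense of the paper's footnote (\( \theta_{\scenario}\np{-\uncertain}=\theta_{\scenario}\np{\uncertain} \)) need not have a symmetric image.
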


\begin{proof}
  First, we use Proposition~\ref{pr:convex_lower_bound_GSO}
to obtain the lower bound~\eqref{eq:convex_lower_bound_GSO}.
For this purpose, we check its three assumptions 
(item~\ref{it:convex_lower_bound_GSO_compatible_families},
item~\ref{it:convex_lower_bound_GSO_weakly_bounded},
item~\ref{it:convex_lower_bound_GSO_function}) one by one.
\begin{itemize}
\item 
 By item~\ref{it:main_disjoint_symmetric_subsets} here,
the family \(\sequence{\Uncertain_{\scenario}}{\scenario\in\SCENARIO}\)
is made of two by two disjoint subsets of~$\UNCERTAIN$. Therefore, 
item~\ref{it:convex_lower_bound_GSO_compatible_families}
of Proposition~\ref{pr:convex_lower_bound_GSO}
%the compatibility condition~\eqref{eq:compatible_families}
is satisfied. 
\item 
For every \( \scenario\in\SCENARIO \), every subset
\( \theta_{\scenario}\np{\Uncertain_{\scenario}} \) is symmetric, 
because so are the subsets~$\Uncertain_{\scenario}$
(by item~\ref{it:main_disjoint_symmetric_subsets} here) 
and
the mappings~\( \theta_{\scenario} \) 
(by item~\ref{it:main_mappings} here).
For every \( \scenario\in\SCENARIO \), every subset
\( -\theta_{\scenario}\np{\Uncertain_{\scenario}} 
=\theta_{\scenario}\np{\Uncertain_{\scenario}} \) is weakly bounded,
because, by~\eqref{eq:cor_triplenorm_SPHERE_BALL},
%by~\eqref{eq:cor_triplenorm_BALL}, 
it is a subset of the 
ball~\eqref{eq:local_unit_ball}, which is bounded
(by Proposition~\ref{pr:comparison_of_norms}, 
as seen at the beginning of the proof of 
Proposition~\ref{pr:convex_lower_bound_GSO_norm}).
Therefore, item~\ref{it:convex_lower_bound_GSO_weakly_bounded}
of Proposition~\ref{pr:convex_lower_bound_GSO} is satisfied.
\item 
Item~\ref{it:convex_lower_bound_GSO_function}
of Proposition~\ref{pr:convex_lower_bound_GSO}
coincides with item~\ref{it:main_function} here.
\end{itemize}

Second, we prove that the term
\( \closedconvexhull\Bp{ -\bigcup_{\scenario\in\SCENARIO} 
\theta_{\scenario}\np{\Uncertain_{\scenario}} } 
= \closedconvexhull\Bp{ \bigcup_{\scenario\in\SCENARIO} 
\theta_{\scenario}\np{\Uncertain_{\scenario}} } \)
in the lower bound~\eqref{eq:convex_lower_bound_GSO}
can be replaced by 
\( \closedconvexhull\Bp{
\bigcup_{\scenario\in\SCENARIO} \BALL_{\scenario} } \).
Indeed 
\begin{align*}
\closedconvexhull\Bp{ \bigcup_{\scenario\in\SCENARIO} 
\BALL_{\scenario} }
% &=
% \closedconvexhull\Bp{ \bigcup_{\scenario\in\SCENARIO} 
% \overline{ \BALL_{\scenario} } }
% \intertext{because every local unit ball~\( \BALL_{\scenario} \) is closed
%   in the closed subspace~$\Subspace_{\scenario}$ 
% by Proposition~\ref{pr:comparison_of_norms}, 
% as seen at the beginning of the proof of 
% Proposition~\ref{pr:convex_lower_bound_GSO_norm},
% hence is closed in~$\HILBERT$, }
&\supset 
\closedconvexhull\Bp{ \bigcup_{\scenario\in\SCENARIO} 
\overline{ \theta_{\scenario}\np{\Uncertain_{\scenario}} } }
\tag{ by \( \overline{ \theta_{\scenario}\np{\Uncertain_{\scenario}} }
\subset \BALL_{\scenario} \) 
in~\eqref{eq:cor_triplenorm_SPHERE_BALL} } %{eq:cor_triplenorm_BALL} }
\\
&=\closedconvexhull\Bp{ \bigcup_{\scenario\in\SCENARIO} 
\theta_{\scenario}\np{\Uncertain_{\scenario}} } 
\tag{ as easily proven}
\\
&\supset 
\closedconvexhull\Bp{
\bigcup_{\scenario\in\SCENARIO} \SPHERE_{\scenario} } 
\tag{ by \( \SPHERE_{\scenario} \subset 
\overline{ \theta_{\scenario}\np{\Uncertain_{\scenario}} } \)
in~\eqref{eq:cor_triplenorm_SPHERE_BALL} }%{eq:cor_triplenorm_SPHERE} }
\\
&=  
\overline{\convexhull\Bp{
\bigcup_{\scenario\in\SCENARIO} \SPHERE_{\scenario} } }
%\tag{ by definition }
\\
&=  
\overline{\convexhull\Bp{
\bigcup_{\scenario\in\SCENARIO} \convexhull\SPHERE_{\scenario} } }
\tag{ as easily proved }
\\
&=  
\overline{\convexhull\Bp{
\bigcup_{\scenario\in\SCENARIO} \BALL_{\scenario} } }
\tag{ because \( \convexhull\SPHERE_{\scenario}=\BALL_{\scenario} \),
\( \forall \scenario\in\SCENARIO \) 
  }
\\
&=  
\closedconvexhull\Bp{
\bigcup_{\scenario\in\SCENARIO} \BALL_{\scenario} } 
\eqfinp
\end{align*}

Third, by Proposition~\ref{pr:triplenorm_local_norms},
there exists a norm~\(\triplenorm{\cdot}\),
with unit ball 
\( \closedconvexhull\Bp{
\bigcup_{\scenario\in\SCENARIO} \BALL_{\scenario} } \).

This ends the proof.
\end{proof}

A possible choice for the
family \( \sequence{\theta_{\scenario}}{\scenario\in\SCENARIO} \)
of symmetric mappings
\( \theta_{\scenario} : \Uncertain_{\scenario} \to \PRIMAL \)
is given by the normalization mappings
\begin{equation*}
\forall \scenario\in\SCENARIO \eqsepv 
  \normalized_{\scenario}: %\closedspan
\Uncertain_{\scenario}
  \to \SPHERE_{\scenario} \cup \{0\} 
  \eqsepv
  \normalized_{\scenario}\np{\primal}=
  \begin{cases}
    \frac{ \primal }{ \triplenorm{\primal}_{\scenario} }
    & \mtext{ if } \primal \in %\closedspan
\Uncertain_{\scenario}\backslash\{0\} 
    \eqfinv 
    \\
    0 
    & \mtext{ if } \primal = 0 
\eqfinv
  \end{cases}
\end{equation*}
under the assumption (equivalent to~\eqref{eq:cor_triplenorm_SPHERE_BALL})
that 
\begin{equation}
  \overline{ \defset{ \frac{ \primal }{ \triplenorm{\primal}_{\scenario} } }%
{ \primal \in \Uncertain_{\scenario}\backslash\{0\} } } 
= \overline{ \normalized_{\scenario}\np{\Uncertain_{\scenario}} }
= \SPHERE_{\scenario} 
\eqsepv 
\forall \scenario\in\SCENARIO 
\eqfinp
\label{eq:cor_triplenorm_SPHERE}
\end{equation}

With these normalization mappings in Theorem~\ref{th:main},
we recover the case developed
in~\S\ref{Lower_bounds_for_exact_sparse_optimization_problems},
where sparsity is exactly measured by the \pseudonormlzero, with:
\begin{itemize}
\item 
finite set \( \SCENARIO=
\defset{\Scenario}{ \Scenario \subset \{1, \ldots, d \} 
\text{ and } \cardinal{\Scenario} \leq k } \),
\item 
subsets \( \Uncertain_{\Scenario}= 
\defset{ \primal \in \RR^d }{ \Support{\primal} = \Scenario } \)
of the Euclidian space \( \UNCERTAIN=\RR^d \) (sparsity),
for all \(  \scenario\in\SCENARIO \),
with the convention that \( \Uncertain_{\emptyset}=\{0\}\),
\item 
norms \( \triplenorm{\primal}_{\Scenario}=
\norm{\primal}_2 \) (amplitude).
\end{itemize}
Our framework encompasses the
(latent) group Lasso norms \cite{Obozinski-Jacob-Vert:inria-00628498,
Obozinski-Bach:hal-01412385}, with:
\begin{itemize}
\item 
finite set \( \SCENARIO=
2^{\{1, \ldots, d \}} = 
\defset{\Scenario}{ \Scenario \subset \{1, \ldots, d \} } \),
\item 
subsets \( \Uncertain_{\Scenario}= 
\defset{ \primal \in \RR^d }{ \Support{\primal} = \Scenario } \)
of the Euclidian space \( \UNCERTAIN=\RR^d \) (sparsity),
with the convention that \( \Uncertain_{\emptyset}=\{0\}\),
\item 
norms \( \triplenorm{\primal}_{\Scenario}=
\frac{ \norm{\primal}_q}{F\np{\Scenario}^{1/q}} \), 
where \( F : 2^{\{1, \ldots, d \}} \to ]0,+\infty] \)  (amplitude).
\end{itemize}
In both cases, a global norm is inferred, by convolution,
from local norms on~\( \closedspan\Uncertain_{\Scenario}= 
\RR^{\Scenario} \times \{0\}^{-\Scenario} \).
The condition~\eqref{eq:cor_triplenorm_SPHERE} holds indeed true
as \( \normalized_{\Scenario}\np{\Uncertain_{\Scenario}} \)
is \( \SPHERE_{\Scenario} \) minus a finite number of points
(those on the axis), hence 
\( \overline{ \normalized_{\Scenario}\np{\Uncertain_{\Scenario}} }
= \SPHERE_{\Scenario} \). 
Theorem~\ref{th:main} provides support for the use of this type of norms
in sparse optimization, and Proposition~\ref{pr:triplenorm_local_norms}
displays a general method to construct large classes of norms.

\renewcommand{\UNCERTAIN}{\mathbb{W}}
\renewcommand{\Uncertain}{W}
\renewcommand{\uncertain}{w}
\renewcommand{\PRIMAL}{{\mathbb X}}
\renewcommand{\Primal}{X}
\renewcommand{\primal}{x}
\renewcommand{\DUAL}{{\mathbb Y}}
\renewcommand{\Dual}{Y}
\renewcommand{\dual}{y}

\section{Conclusion}

In this paper, we have consider \emph{exact} sparse optimization problems,
that is, problems with combinatorial sparsity constraint.
More precisely, we have focused on
problems where sparsity is measured either by
the nonconvex \pseudonormlzero\ (and not by substitute penalty terms)
or by the belonging of the solution to a finite union of given subsets.

In exact sparse optimization problems, 
where sparsity is  measured by the \pseudonormlzero,
one looks for solution that have few nonzero components. 
It is well-known that the Fenchel biconjugate 
of the \pseudonormlzero\ is zero, making it hopeless to 
replace the \pseudonormlzero\ by its 
best lower convex lsc approximation.
In the same vein, the highly nonconvex constraint that the \pseudonormlzero\ 
be less than a given integer cannot be handled by the Fenchel conjugacy,
because the conjugate of its characteristic function is identically~$+\infty$.

In this paper, 
we have proposed to handle the \pseudonormlzero, not by the 
Fenchel conjugacy, but by a suitable so-called \Capra\ conjugacy,
as introduced in the companion paper~\cite{Chancelier-DeLara:2019b}.
By doing so, we have displayed a convex program that is a lower bound
of the original combinatorial optimization problem.
We insist that it is a lower bound, and not a substitute problem
with substitute penalty terms. Thus doing, we keep track of 
the original nonconvex problem.

Going on, we have studied
generalized sparse optimization, where the solution is looked for
in a finite union of given subsets.
We have identified suitable couplings, 
namely the one-sided linear couplings, and, 
thus equipped, we have been able to obtain more general results.
Our main contribution is to provide a systematic way to 
design norms, and associated convex programs 
that are lower bounds for the original 
exact sparse optimization problem.
\bigskip

\textbf{Acknowledgements.}
We want to thank 
Guillaume Obozinski, 
Yohann De Castro, 
Nathan De Lara, 
Antonin Chambolle,
Miguel Angel Goberna
and 
Marco Antonio Lopez Cerda
for discussions on first versions of this work.

\appendix
\section{Appendix}

\subsection{Background on J.~J. Moreau lower and upper additions}
\label{Moreau_lower_and_upper_additions}

% \subsection{Background on J.~J. Moreau lower and upper additions}
% \label{Moreau_lower_and_upper_additions}

% A function $f: \Primal \to \bar\RR$, where 
% \( \bar\RR = [-\infty,+\infty] \) is said to be an
% \emph{extended function}, that we will shorten into function.

% The \emph{domain}~$\dom f$ of the extended function 
% $f: \Primal \to ]-\infty,+\infty] $ is
% \begin{equation}
% \dom f = \{ \primal \in \Primal \mid f(\primal) < +\infty \} \eqfinp 
% \end{equation}
% The extended function $f: \Primal \to ]-\infty,+\infty] $ 
% is said to be \emph{proper} if $\dom f \not = \emptyset$. 
% Thus, a \emph{proper function} cannot take the value $-\infty$,
% and must take at least one finite value. 

When we manipulate functions with values 
in~$\bar\RR = [-\infty,+\infty] $,
we adopt the following Moreau \emph{lower addition} or
\emph{upper addition}, depending on whether we deal with $\sup$ or $\inf$
operations. 
We follow \cite{Moreau:1970}.
In the sequel, $u$, $v$ and $w$ are any elements of~$\bar\RR$.

\subsubsection*{Moreau lower addition}

\begin{subequations}
  The Moreau \emph{lower addition} extends the usual addition with 
  \begin{equation}
    % \np{+\infty} - \np{+\infty} = \np{-\infty} - \np{-\infty} = -\infty \eqfinp
    \np{+\infty} \LowPlus \np{-\infty} = \np{-\infty} \LowPlus \np{+\infty} = -\infty \eqfinp
    \label{eq:lower_addition}
  \end{equation}
  % From the lower addition, we naturally deduce the \emph{lower substraction}:
  % \begin{equation}
  %   u \minusdot v = u \LowPlus \np{-v}  \eqfinp
  %   \label{eq:lower_substraction}
  % \end{equation}
  With the \emph{lower addition}, \( \np{\bar\RR, \LowPlus } \) is a convex cone,
  with $ \LowPlus $ commutative and associative.
  The lower addition displays the following properties:
  \begin{align}
    u \leq u' \eqsepv v \leq v' 
    & 
      \Rightarrow u \LowPlus v \leq u'  \LowPlus v' 
\eqfinv 
\label{eq:lower_addition_leq}
\\
    (-u) \LowPlus (-v) 
    & 
      \leq -(u \LowPlus v) 
\eqfinv 
\label{eq:lower_addition_minus}
\\ 
    (-u) \LowPlus u 
    & 
      \leq 0 
\eqfinv 
\label{eq:lower_substraction_le_zero}
\\
    \sup_{a\in\AAA} f(a) \LowPlus \sup_{b\in\BB} g(b)
    &=
      \sup_{a\in\AAA,b\in\BB} \bp{f(a) \LowPlus g(b)} 
\eqfinv
\label{eq:lower_addition_sup}
\\ 
    \inf_{a\in\AAA} f(a) \LowPlus \inf_{b\in\BB} g(b) 
    & \leq 
      \inf_{a\in\AAA,b\in\BB} \bp{f(a) \LowPlus g(b)} 
\eqfinv
\label{eq:lower_addition_inf}
\\ 
    t < +\infty \Rightarrow    \inf_{a\in\AAA} f(a) \LowPlus t 
    & =
      \inf_{a\in\AAA} \bp{f(a) \LowPlus t} 
\eqfinp
\label{eq:lower_addition_inf_constant}
  \end{align}
\end{subequations}

\subsubsection*{Moreau upper addition}

\begin{subequations}
  The Moreau \emph{upper addition} extends the usual addition with 
  \begin{equation}
    \np{+\infty} \UppPlus \np{-\infty} = 
    \np{-\infty} \UppPlus \np{+\infty} = +\infty \eqfinp
    \label{eq:upper_addition}
  \end{equation}
  % From the upper addition, we naturally deduce the \emph{upper substraction}:
  % \begin{equation}
  %   u \dotminus v = u \UppPlus \np{-v}  \eqfinp
  %   \label{eq:upper_substraction}
  % \end{equation}
  With the \emph{upper addition}, \( \np{\bar\RR, \UppPlus } \) is a convex cone,
  with $ \UppPlus $ commutative and associative.
  The upper addition displays the following properties:
  \begin{align}
    u \leq u' \eqsepv v \leq v' 
    & 
      \Rightarrow u \UppPlus v \leq u' \UppPlus v' \eqfinv
      \label{eq:upper_addition_leq} \\
    (-u) \UppPlus (-v) 
    & 
      \geq -(u \UppPlus v) \eqfinv
      \label{eq:upper_addition_minus} 
    \\ 
    (-u) \UppPlus u 
    & 
      \geq 0 \eqfinv
      \label{eq:upper_substraction_ge_zero} 
    \\ 
    \inf_{a\in\AAA} f(a) \UppPlus \inf_{b\in\BB} g(b) 
    & =
      \inf_{a\in\AAA,b\in\BB} \bp{f(a) \UppPlus g(b)} \eqfinv
      \label{eq:upper_addition_inf} 
    \\
    \sup_{a\in\AAA} f(a) \UppPlus \sup_{b\in\BB} g(b)
    & \geq 
      \sup_{a\in\AAA,b\in\BB} \bp{f(a) \UppPlus g(b)} \eqfinv
      \label{eq:upper_addition_sup}
    \\ 
    -\infty < t \Rightarrow    \sup_{a\in\AAA} f(a) \UppPlus t 
    & =
      \sup_{a\in\AAA} \bp{f(a) \UppPlus t} \eqfinp
      \label{eq:upper_addition_sup_constant}
      % \\  u \UppPlus (v \LowPlus w) & \geq  (u \UppPlus v) \LowPlus w  
  \end{align}
\end{subequations}

\subsubsection*{Joint properties of the Moreau lower and upper addition}

\begin{subequations}
We obviously have that
\begin{equation}
  u \LowPlus v \leq u \UppPlus v \eqfinp 
\label{eq:lower_leq_upper_addition}
\end{equation}
The Moreau lower and upper additions are related by
\begin{equation}
-(u \UppPlus v) = (-u) \LowPlus (-v) \eqsepv 
-(u \LowPlus v) = (-u) \UppPlus (-v) \eqfinp 
\label{eq:lower_upper_addition_minus}  
\end{equation}
They satisfy the inequality
\begin{equation}
(u \UppPlus v) \LowPlus w \leq u \UppPlus (v \LowPlus w) \eqfinp
                                 \label{eq:lower_upper_addition_inequality} 
\end{equation}
with 
  \begin{equation}
(u \UppPlus v) \LowPlus w < u \UppPlus (v \LowPlus w) 
\iff
\begin{cases}
 u=+\infty \mtext{ and } w=-\infty \eqsepv \\
\mtext{ or } \\
u=-\infty \mtext{ and } w=+\infty 
\mtext{ and } -\infty < v < +\infty \eqfinp
\end{cases}
                                 \label{eq:lower_upper_addition_equality} 
\end{equation}
Finally, we have that 
\begin{align}
& u \LowPlus (-v) \leq 0 \iff u \leq v  \iff  0 \leq v \UppPlus (-u) 
\eqfinv
\label{eq:lower_upper_addition_comparisons}
\\
& u \LowPlus (-v) \leq w \iff u \leq v \UppPlus w \iff u \LowPlus (-w) \leq v
\eqfinv 
\\  
& w \leq v \UppPlus (-u) \iff u \LowPlus w \leq v \iff u \leq v \UppPlus (-w) 
\eqfinp
\end{align}
 \end{subequations}

% \begin{lemma} $a \LowPlus ( - b) \le 0 \iff a \le b  \iff  (-a) \UppPlus b \ge 0$
% \label{lem:comparaison}
% \end{lemma}

% \begin{proof}
%   We prove the first equivalence and leave the second one to the reader. 
%   Assume that $a \LowPlus ( - b) \le 0$.
%   Now, to obtain $a \le b$, we consider three cases.
%   \begin{enumerate}
%   \item 
%     If \( -\infty < b < +\infty \), then from 
%     \( a \LowPlus (-b)\le 0 \), we deduce that $a$ cannot take the value $+ \infty$. If $a$ is finite we have 
%     $a \le b$ by standard algebra and if $a$ is equal to $-\infty$ we indeed also have $a = -\infty \le b$.
%   \item 
%     If \( b =-\infty \), then from 
%     \( a \LowPlus \bp{ - b } \leq 0 \), we deduce that 
%     \( a \LowPlus (+\infty) \leq 0 \), hence that $a = -\infty$. Therefore, 
%     \( a =-\infty \leq b =-\infty \).
%   \item 
%     If \( b = +\infty \), then we have \( a \leq +\infty = b \).
%   \end{enumerate}
%   Assume that $a \le b$, we then have by Equation~\eqref{eq:lower_addition_leq} that 
%   $a \LowPlus (-b) \le b \LowPlus (-b)$. Which gives $a \LowPlus (-b) \le b \LowPlus (-b) \le 0$ 
%   using Equation~\eqref{eq:lower_substraction_le_zero}. 
% \end{proof}

\subsection{Background on sets and functions}
\label{Background_on_sets_and_functions}

\renewcommand{\UNCERTAIN}{\mathbb{W}}
\renewcommand{\Uncertain}{W}
\renewcommand{\uncertain}{w}
\renewcommand{\PRIMAL}{{\mathbb X}}
\renewcommand{\Primal}{X}
\renewcommand{\primal}{x}
\renewcommand{\DUAL}{{\mathbb Y}}
\renewcommand{\Dual}{Y}
\renewcommand{\dual}{y}

Let $\UNCERTAIN$ be a set.
\begin{itemize}
\item 
The \emph{effective domain} of a function 
\( \fonctionuncertain : \UNCERTAIN \to \barRR \)
is \( \dom\fonctionuncertain = 
\defset{\uncertain\in\UNCERTAIN}{\fonctionuncertain\np{\uncertain}<+\infty} \).
\item 
The function \( \fonctionuncertain : \UNCERTAIN \to \barRR \)
is said to be \emph{proper} when 
\( \dom\fonctionuncertain \neq \emptyset \) and 
\( \defset{\uncertain\in\UNCERTAIN}{\fonctionuncertain\np{\uncertain}=-\infty} 
=~\emptyset \).
\item 
When \( \UNCERTAIN \) is a topological space, 
\( \continuitypoints\fonctionuncertain \) denotes the \emph{continuity points}
of the function \( \fonctionuncertain : \UNCERTAIN \to \barRR \).
\item 
The \emph{characteristic function} 
of a set~$\Uncertain \subset \UNCERTAIN$ is the function~$\delta_{\Uncertain}$ 
defined by 
\begin{equation}
  \delta_{\Uncertain}\np{\uncertain} =  
   \begin{cases}
      0 &\text{ if } \uncertain \in \Uncertain \eqfinv \\ 
      +\infty &\text{ if } \uncertain \not\in \Uncertain \eqfinp
    \end{cases} 
\label{eq:characteristic_function} 
\end{equation}
\end{itemize}

\subsubsection{Topological vector space}

% \cite{Rockafellar:1970}
Let $\UNCERTAIN$ be a topological vector space
and let $\Uncertain \subset \UNCERTAIN$.
\begin{itemize}
\item 
The set~\( \Uncertain \) is \emph{symmetric} if
\( -\Uncertain=\Uncertain \).
\item 
The \emph{conical hull} of~$\Uncertain$ is the smallest cone in~$\UNCERTAIN$
that contains~$\Uncertain$, denoted by \( \conicalhull\Uncertain \).
\item 
The \emph{convex hull} of~$\Uncertain$ is the smallest convex set in~$\UNCERTAIN$
that contains~$\Uncertain$, denoted by \( \convexhull\Uncertain \).
\item 
The \emph{closed convex hull} of~$\Uncertain$ is the smallest closed convex set in~$\UNCERTAIN$
that contains~$\Uncertain$, denoted by \( \closedconvexhull\Uncertain \).
\item 
The \emph{span} of~$\Uncertain$ is the smallest
subspace of~$\UNCERTAIN$
that contains~$\Uncertain$, denoted by \( \linearspan\Uncertain \).
\item 
The \emph{closed span} of~$\Uncertain$ is the smallest closed 
subspace of~$\UNCERTAIN$
that contains~$\Uncertain$, denoted by \( \closedspan\Uncertain \).
\end{itemize}

\subsubsection{Dual system}
\label{Dual_system}

We say that two vector spaces $\PRIMAL$ and $\DUAL$ 
form a \emph{dual system} \cite[p. 211]{Aliprantis-Border:1999}
if $\PRIMAL$ and $\DUAL$ are equipped with 
a bilinear form \( \proscal{}{} \), 
such that 
\( \Bp{ \forall \primal \in \PRIMAL \eqsepv
\proscal{\primal}{\dual} =0 } \Rightarrow \dual=0 \)
and
\( \Bp{ \forall \dual \in \DUAL \eqsepv
\proscal{\primal}{\dual} =0 } \Rightarrow \primal=0 \).
% \( \PRIMAL^{\perp}=\{0\} \) and \( \DUAL^{\perp}=\{0\} \),
%where the orthogonal sets are defined in~\eqref{eq:orthogonal_set}.
By default, the primal space~$\PRIMAL$ is equipped with the 
weak topology (of pointwise convergence), and the dual space~ $\DUAL$ 
with the weak* topology.

\begin{definition}
Let $\PRIMAL$ and $\DUAL$ be a dual system
% two vector spaces
% equipped with a bilinear form \( \proscal{}{} \),
and let \( \Primal \subset \PRIMAL \).
%and \( \Dual \subset \DUAL \).
\begin{itemize}
\item 
The \emph{support function} % of a subset~$\Primal\subset\PRIMAL$}:
of~$\Primal$ is defined by 
\begin{equation}
  \sigma_{\Primal}\np{\dual} = 
\sup_{\primal\in\Primal} \proscal{\primal}{\dual}
\eqsepv \forall \dual \in \DUAL
\eqfinp
\label{eq:support_function}
\end{equation}
\item 
The \emph{barrier cone} of~$\Primal$
is the effective domain of the support function~$\sigma_{\Primal}$:
\begin{equation}
  \barriercone\Primal
= \defset{\dual \in \DUAL}%
{ \sup_{\primal\in\Primal} \proscal{\primal}{\dual} < +\infty }
=\dom\sigma_{\Primal} 
    \eqfinp
\label{eq:barrier_cone}
\end{equation}
\item 
The set \( \Primal \) is said to be \emph{weakly bounded}
%if \( \barriercone\Primal=\DUAL \), that is, 
if \( \sup_{\primal\in\Primal} \proscal{\primal}{\dual} < +\infty \)
for all \( \dual \in \DUAL \):
\begin{equation}
\Primal \text{ is weakly bounded } \iff 
\barriercone\Primal=\DUAL \iff 
\dom\sigma_{\Primal} =\DUAL
\eqfinp
\label{eq:weakly_bounded}
\end{equation}
\item 
The \emph{orthogonal set} of \( \Primal \) is defined by 
\begin{equation}
  \Primal^{\perp}
= \defset{\dual \in \DUAL}{\proscal{\primal}{\dual} =0
      \eqsepv \forall \primal \in \Primal } 
\eqfinp
\label{eq:orthogonal_set}
\end{equation}
\item 
The \emph{polar set} of \( \Primal \) is defined by 
\begin{equation}
  \Primal^{\odot} 
= \defset{\dual \in \DUAL}{\proscal{\primal}{\dual} \leq 1 
      \eqsepv \forall \primal \in \Primal } 
\eqfinp
\label{eq:polar_set}
\end{equation}
\end{itemize}
We obtain symmetric definitions for~\( \Dual \subset \DUAL \).
\label{de:dual_system}
\end{definition}

% The \emph{orthogonal sets} of \( \Primal \subset \PRIMAL \)
% and \( \Dual \subset \DUAL \) are defined by 
% \begin{subequations}
%   \begin{align}
% \Primal^{\perp}
% &= \defset{\dual \in \DUAL}{\proscal{\primal}{\dual} =0
%       \eqsepv \forall \primal \in \Primal } 
% \eqfinv
% \\
% \Dual^{\perp}
% &= \defset{\primal \in \PRIMAL}{\proscal{\primal}{\dual} =0
%       \eqsepv \forall \dual \in \Dual } 
% \eqfinp 
%   \end{align}
% \label{eq:orthogonal_sets}
% \end{subequations}
% \item 
% The \emph{polar sets} of \( \Primal \subset \PRIMAL \)
% and \( \Dual \subset \DUAL \) are defined by 
% \begin{subequations}
%   \begin{align}
% \Primal^{\odot} 
% &= \defset{\dual \in \DUAL}{\proscal{\primal}{\dual} \leq 1
%       \eqsepv \forall \primal \in \Primal } 
% \eqfinv
% \\
% \Dual^{\odot} 
% &= \defset{\primal \in \PRIMAL}{\proscal{\primal}{\dual} \leq 1
%       \eqsepv \forall \dual \in \Dual } 
% \eqfinp 
%   \end{align}
% \label{eq:polar_sets}
% \end{subequations}

We provide different properties of barrier cones
and of weakly bounded sets.

\begin{proposition}
\quad
  \begin{enumerate}
  \item 
  The barrier cone~\eqref{eq:barrier_cone} of~$\Primal\subset\PRIMAL$
satisfies the following properties
\begin{subequations}
\begin{align}
  \barriercone\Primal 
&= 
\barriercone\np{\convexhull\Primal}
=\barriercone\np{\closedconvexhull\Primal}
    \eqfinv
\label{eq:barrier_cone_and_convexhull}
\\
  \barriercone\Primal 
&=
\conicalhull\bp{\Primal^{\odot}}
\label{eq:barrier_cone=cone_generated_by_polar}
\eqfinp 
\end{align}
\end{subequations}
\item 
Let 
\( \sequence{\Primal_{\scenario}}{\scenario\in\SCENARIO} \)
be a family of subsets of~$\PRIMAL$. Then 
\begin{equation}
\SCENARIO \textrm{ is a finite set } \Rightarrow
 \barriercone\bp{\bigcup_{\scenario\in\SCENARIO} \Primal_{\scenario}}
=\bigcap_{\scenario\in\SCENARIO} \barriercone\Primal_{\scenario}
\eqfinp
\label{eq:barrier_cone_of_a_finite_union}
\end{equation}
As an application, if \( \sequence{\Primal_{\scenario}}{\scenario\in\SCENARIO} \)
is a finite family of weakly bounded subsets of~$\PRIMAL$, then 
 the finite union \( \bigcup_{\scenario\in\SCENARIO} \Primal_{\scenario} \) 
is weakly bounded.
\label{it:barrier_cone_of_a_finite_union}
\item 
If $\HILBERT$ is a Hilbert space, 
then bounded subsets of~$\HILBERT$ are weakly bounded:
\begin{equation}
\Hilbert \subset \HILBERT \textrm{ is bounded } \Rightarrow
 \barriercone\Hilbert=\HILBERT
\eqfinp
\label{eq:barrier_cone_of_a_bounded_set}
\end{equation}
  \end{enumerate}
\label{pr:barrier_cone}
\end{proposition}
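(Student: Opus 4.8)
The plan is to reduce all three items to properties of the support function~$\sigma_{\Primal}$ from~\eqref{eq:support_function}, since by~\eqref{eq:barrier_cone} the barrier cone is precisely its effective domain, $\barriercone\Primal=\dom\sigma_{\Primal}$. Each identity between barrier cones will then be read off from a corresponding identity (or a comparison of effective domains) between support functions. The key elementary observation, used throughout, is that $\Primal^{\odot}=\defset{\dual\in\DUAL}{\sigma_{\Primal}\np{\dual}\leq 1}$ is exactly the unit sublevel set of~$\sigma_{\Primal}$.

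For item~1, I would first establish $\sigma_{\Primal}=\sigma_{\convexhull\Primal}=\sigma_{\closedconvexhull\Primal}$ and then take effective domains to get~\eqref{eq:barrier_cone_and_convexhull}. For any fixed $\dual\in\DUAL$ the map $\primal\mapsto\proscal{\primal}{\dual}$ is linear, so its supremum is unchanged when passing to finite convex combinations, giving $\sigma_{\Primal}=\sigma_{\convexhull\Primal}$; and since this map is continuous for the weak topology of the dual system, the supremum over the closure agrees with that over $\convexhull\Primal$, giving $\sigma_{\convexhull\Primal}=\sigma_{\closedconvexhull\Primal}$. For~\eqref{eq:barrier_cone=cone_generated_by_polar} I would use positive homogeneity of $\sigma_{\Primal}$: if $\dual\in\barriercone\Primal$ with $\sigma_{\Primal}\np{\dual}\leq 0$ then $\dual\in\Primal^{\odot}$, while if $0<\sigma_{\Primal}\np{\dual}<+\infty$ then $\dual/\sigma_{\Primal}\np{\dual}\in\Primal^{\odot}$, so in both cases $\dual\in\conicalhull\bp{\Primal^{\odot}}$. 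The reverse inclusion is immediate, since every $\dual\in\conicalhull\bp{\Primal^{\odot}}$ is of the form $\lambda\dual_{0}$ with $\lambda\geq 0$ and $\dual_{0}\in\Primal^{\odot}$, whence $\sigma_{\Primal}\np{\dual}=\lambda\sigma_{\Primal}\np{\dual_{0}}\leq\lambda<+\infty$.

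For item~2, I would use that a supremum over a union is the supremum of the suprema, so that $\sigma_{\bigcup_{\scenario}\Primal_{\scenario}}=\sup_{\scenario}\sigma_{\Primal_{\scenario}}$. Then $\dual$ lies in $\barriercone\bp{\bigcup_{\scenario}\Primal_{\scenario}}$ if and only if $\sup_{\scenario}\sigma_{\Primal_{\scenario}}\np{\dual}<+\infty$; because $\SCENARIO$ is \emph{finite}, this holds if and only if $\sigma_{\Primal_{\scenario}}\np{\dual}<+\infty$ for every $\scenario$, that is, $\dual\in\bigcap_{\scenario}\barriercone\Primal_{\scenario}$, which is~\eqref{eq:barrier_cone_of_a_finite_union}. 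The stated application then follows at once: if each $\Primal_{\scenario}$ is weakly bounded, i.e.\ $\barriercone\Primal_{\scenario}=\DUAL$, the intersection is $\DUAL$. For item~3, boundedness of $\Hilbert$ provides $M>0$ with $\norm{\primal}\leq M$ on $\Hilbert$, and Cauchy--Schwarz gives $\proscal{\primal}{\dual}\leq M\norm{\dual}$, hence $\sigma_{\Hilbert}\np{\dual}\leq M\norm{\dual}<+\infty$ for all $\dual$, so $\barriercone\Hilbert=\HILBERT$ after the Riesz identification $\DUAL=\HILBERT$.

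The only genuinely delicate point---and the one I would flag as the crux---is the finiteness hypothesis on $\SCENARIO$ in item~2: finiteness of a supremum is equivalent to finiteness of each of its terms only for finite index sets, so the argument must invoke finiteness of~$\SCENARIO$ exactly at the passage from $\sup_{\scenario}\sigma_{\Primal_{\scenario}}\np{\dual}<+\infty$ to finiteness of each $\sigma_{\Primal_{\scenario}}\np{\dual}$. The sign case-distinction in~\eqref{eq:barrier_cone=cone_generated_by_polar} (covering $\sigma_{\Primal}\np{\dual}\leq 0$) is the other spot needing a little care, but it is elementary; everything else is bookkeeping with support functions.
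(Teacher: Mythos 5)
Your proposal is correct and follows essentially the same route as the paper: identifying $\barriercone\Primal=\dom\sigma_{\Primal}$, using $\sigma_{\Primal}=\sigma_{\convexhull\Primal}=\sigma_{\closedconvexhull\Primal}$ and positive homogeneity (the paper writes $\barriercone\Primal=\bigcup_{\lambda>0}\defset{\dual}{\dual/\lambda\in\Primal^{\odot}}$, which is your sign case-distinction in disguise), then $\sigma_{\bigcup_{\scenario}\Primal_{\scenario}}=\max_{\scenario}\sigma_{\Primal_{\scenario}}$ with finiteness of~$\SCENARIO$ invoked exactly where you flag it, and Cauchy--Schwarz for the Hilbert case. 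The only difference is that you prove the support-function identities that the paper simply cites, which is fine.
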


\begin{proof}
  \begin{enumerate}
  \item 
Equation~\eqref{eq:barrier_cone_and_convexhull}
is a consequence of the definition~\eqref{eq:barrier_cone}
and of the property of support functions that 
\( \sigma_{ \Primal } = \sigma_{ \convexhull\Primal }
= \sigma_{ \closedconvexhull\Primal } \). 

The proof of~\eqref{eq:barrier_cone=cone_generated_by_polar}
follows easily from
\[
\barriercone\Primal = 
\bigcup_{\lambda >0} \defset{\dual \in \DUAL}%
{ \sup_{\primal\in\Primal} \proscal{\primal}{\dual} \leq \lambda }
=\bigcup_{\lambda >0} \defset{\dual \in \DUAL}%
{\frac{\dual}{\lambda} \in \Primal^{\odot}} 
=\conicalhull\bp{\Primal^{\odot}}
    \eqfinv
\]
by the definition~\eqref{eq:polar_set} of the 
polar set~\( \Primal^{\odot} \).

\item 
The proof of~\eqref{eq:barrier_cone_of_a_finite_union} follows
from the observation that 
\( \sigma_{ \np{\bigcup_{\scenario\in\SCENARIO} \Primal_{\scenario}} }
= \max_{\scenario\in\SCENARIO}\sigma_{ \Primal_{\scenario} } \)
with a maximum since the set $\SCENARIO$ is finite,
and from the definition~\eqref{eq:barrier_cone}
that \( \barriercone\Primal =\dom\sigma_{\Primal} \).

As an application, if every \( \Primal_{\scenario} \) is weakly bounded,
for every \( \scenario\in\SCENARIO \),
and $\SCENARIO$ is finite, we get, by~\eqref{eq:barrier_cone_of_a_finite_union}, that 
\[ 
\barriercone\Bp{ \bigcup_{\scenario\in\SCENARIO} \Primal_{\scenario} }
= \bigcap_{\scenario\in\SCENARIO} \barriercone\Primal_{\scenario}
= \bigcap_{\scenario\in\SCENARIO} \DUAL = \DUAL 
\eqfinv
\]
and we conclude that the finite union 
\( \bigcup_{\scenario\in\SCENARIO} \Primal_{\scenario} \) 
is weakly bounded by definition~\eqref{eq:barrier_cone}.
\item 

\renewcommand{\PRIMAL}{\HILBERT}
\renewcommand{\DUAL}{\HILBERT}
\renewcommand{\Primal}{\Hilbert}
\renewcommand{\Dual}{\Hilbert'}
\renewcommand{\primal}{\hilbert}
\renewcommand{\dual}{\hilbert'}
The proof of~\eqref{eq:barrier_cone_of_a_bounded_set} follows
from the observation that, in a Hilbert space, 
\( \proscal{\primal}{\dual} \leq \norm{\primal} \norm{\dual} \), so that,
for any \( \dual \in \DUAL \), we have that 
\(
\sup_{\primal\in\Primal} \proscal{\primal}{\dual} \leq 
\bp{ \sup_{\primal\in\Primal} \norm{\primal} } \norm{\dual} <+\infty
\), as \( \sup_{\primal\in\Primal} \norm{\primal} <+\infty \)
since $\Primal$ is bounded.

  \end{enumerate}

This ends the proof.  
\end{proof}

\subsection{Background on norms and dual norms}
\label{Dual_norm}

% \subsection{Background on dual norms}
% \label{Dual_norm}

Here, we collect different results on norms, equivalent norms,  
and norms induced by support functions (in a dual system and in 
a Hilbert space).
\medskip

For a norm~\( \norm{\cdot} \) on a vector space~$\UNCERTAIN$,
we denote the unit ball by
\begin{equation}
  \BALL_{\norm{\cdot}}
= \defset{\uncertain \in \UNCERTAIN}{\norm{\uncertain} \leq 1} 
 \eqfinp 
    \label{eq:NORM_UNIT_BALL}
\end{equation}
A unit ball is always convex, symmetric and 
with full conical hull, that is, 
\( \conicalhull\BALL_{\triplenorm{\cdot}}=\UNCERTAIN\)
(indeed any \( \uncertain\in\UNCERTAIN\backslash\{0\} \) can be written
\( \uncertain=\norm{\uncertain} \frac{\uncertain}{\norm{\uncertain}}
\in \conicalhull\BALL_{\norm{\cdot}}\), 
and \( 0\in \BALL_{\norm{\cdot}} \subset
\conicalhull\BALL_{\norm{\cdot}}\)).

\subsubsection{Equivalent norms}

We recall definition and characterizations of 
equivalent norms.

\begin{proposition}
  Let \( \norm{\cdot}^\sharp \) and 
  \( \norm{\cdot}^\flat \) be two norms on a vector space~$\UNCERTAIN$.
  The following statements are equivalent.
  \begin{enumerate}
  \item \label{p:un}
    There exists $M>0$ such that 
    \( \norm{\cdot}^\flat \leq M \norm{\cdot}^\sharp \).
  \item \label{p:deux}
    The topology of~\( \norm{\cdot}^\sharp \) is richer 
    (contains more open sets) than 
    the topology of~\( \norm{\cdot}^\flat \).
  \item \label{p:trois}
    The function 
    \( \norm{\cdot}^\flat : \bp{\UNCERTAIN,\norm{\cdot}^\sharp} \to \barRR \)
    is continuous.
  \item \label{p:quatre}
    The unit ball \( \BALL_{\norm{\cdot}^\flat} \) is closed 
    for the topology of \( \norm{\cdot}^\sharp \).
  \item \label{p:cinq}
    The unit ball \( \BALL_{\norm{\cdot}^\flat} \) has nonempty interior
    for the topology of \( \norm{\cdot}^\sharp \). 
  \item \label{p:six}
    \( 0 \in \mathrm{int}_{\norm{\cdot}^\sharp}\BALL_{\norm{\cdot}^\flat} \). 
  \item \label{p:sept}
    The unit ball \( \BALL_{\norm{\cdot}^\sharp} \) 
    is bounded for the norm~\( \norm{\cdot}^\flat \).
  \end{enumerate}
  \label{pr:comparison_of_norms}
\end{proposition}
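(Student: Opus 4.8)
The plan is to establish the seven equivalences by a cycle of implications, exploiting that two mechanisms do almost all the work: \emph{positive homogeneity} of the norms for the quantitative statements, and elementary point-set topology for the rest. Concretely, I would first dispatch the ``quantitative'' cluster $\ref{p:un}$, $\ref{p:six}$, $\ref{p:sept}$ as mutually equivalent. For $\ref{p:six}\Rightarrow\ref{p:un}$: if $0\in\mathrm{int}_{\norm{\cdot}^\sharp}\BALL_{\norm{\cdot}^\flat}$ then some $\sharp$-ball $\{\primal:\norm{\primal}^\sharp<\epsilon\}$ lies in $\BALL_{\norm{\cdot}^\flat}$, and rescaling an arbitrary $\primal\neq0$ to have $\sharp$-norm close to $\epsilon$ yields $\norm{\primal}^\flat\leq\epsilon^{-1}\norm{\primal}^\sharp$, i.e.\ $\ref{p:un}$ with $M=\epsilon^{-1}$. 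The implications $\ref{p:un}\Rightarrow\ref{p:sept}$ (take $\norm{\primal}^\sharp\leq1$) and $\ref{p:sept}\Rightarrow\ref{p:un}$ (normalize $\primal$ by $\norm{\primal}^\sharp$ and use the uniform bound on $\BALL_{\norm{\cdot}^\sharp}$) are immediate from homogeneity.

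Next I would connect $\ref{p:cinq}$ to this cluster: $\ref{p:six}\Rightarrow\ref{p:cinq}$ is trivial, and $\ref{p:cinq}\Rightarrow\ref{p:six}$ follows because $\BALL_{\norm{\cdot}^\flat}$ is symmetric and convex --- if $\primal_0$ is a $\sharp$-interior point then so is $-\primal_0$ (the interior of a symmetric set is symmetric), and convexity of the interior places the midpoint $0=\tfrac12\primal_0+\tfrac12(-\primal_0)$ in it. For the topological forward chain I would argue $\ref{p:un}\Rightarrow\ref{p:deux}$ (the inequality makes every $\flat$-ball contain a $\sharp$-ball, so the $\sharp$-topology is finer), then $\ref{p:deux}\Rightarrow\ref{p:trois}$ (a map continuous for a topology stays continuous for any finer one, and $\norm{\cdot}^\flat$ is tautologically continuous for its own topology), then $\ref{p:trois}\Rightarrow\ref{p:quatre}$ (the unit ball $\BALL_{\norm{\cdot}^\flat}=(\norm{\cdot}^\flat)^{-1}\bp{[0,1]}$ is the preimage of a closed set under a $\sharp$-continuous function).

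At this point all that remains to close the cycle is to return from $\ref{p:quatre}$ to, say, $\ref{p:cinq}$, and I expect this to be the crux. The natural consequence of $\ref{p:quatre}$ is weaker than what is needed: $\sharp$-closedness of $\BALL_{\norm{\cdot}^\flat}$ --- equivalently of all sublevel sets $t\BALL_{\norm{\cdot}^\flat}$ by homogeneity --- only says that $\norm{\cdot}^\flat$ is $\sharp$-lower semicontinuous, which does not by itself force a $\sharp$-interior point or the domination $\ref{p:un}$. The upgrade from lower semicontinuity to a nonempty interior is exactly where real work is required: writing $\UNCERTAIN=\bigcup_{n}n\BALL_{\norm{\cdot}^\flat}$ (the ball is absorbing, being a norm ball), each $n\BALL_{\norm{\cdot}^\flat}$ is $\sharp$-closed, so a Baire-category argument gives one of them --- hence $\BALL_{\norm{\cdot}^\flat}$ itself --- a nonempty $\sharp$-interior, which is $\ref{p:cinq}$. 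The obstacle is that this step genuinely needs $\bp{\UNCERTAIN,\norm{\cdot}^\sharp}$ to be \emph{complete}: for a general normed space the equivalence breaks (on $c_{00}$ with $\norm{\cdot}^\sharp=\norm{\cdot}_\infty$ and $\norm{\cdot}^\flat=\norm{\cdot}_1$, the $\flat$-unit ball is $\infty$-closed yet has empty $\infty$-interior, so $\ref{p:quatre}$ holds while $\ref{p:un}$, $\ref{p:cinq}$, $\ref{p:sept}$ all fail). In the paper's applications $\norm{\cdot}^\sharp$ is always the Hilbertian norm, so completeness --- and with it the Baire step --- is available, which is presumably the intended reading.
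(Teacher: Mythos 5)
Your proof of the implications among statements~\ref{p:un}, \ref{p:deux}, \ref{p:trois}, \ref{p:cinq}, \ref{p:six} and~\ref{p:sept} is correct and is, in substance, the paper's own: the paper also handles the cluster \ref{p:un}~$\iff$~\ref{p:sept}~$\iff$~\ref{p:six} by scaling arguments, and proves \ref{p:cinq}~$\Rightarrow$~\ref{p:six} by recentering an interior point at the origin, using the interior formula of \cite[(6.6), p.~114]{Bauschke-Combettes:2017} and a dilation of ratio $1/(1+\norm{\uncertain}^\flat)$, where you instead use symmetry of the ball together with convexity of its interior; the two arguments are interchangeable.

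The genuine divergence concerns statement~\ref{p:quatre}, and on this point you are right and the paper is wrong. The paper's proof covers it by the sentence ``the chain of implications (in both directions) from statements~\ref{p:un} to~\ref{p:quatre} is easy to prove'', which claims in particular \ref{p:quatre}~$\Rightarrow$~\ref{p:trois} without argument. As you note, closedness of the sublevel sets $t\,\BALL_{\norm{\cdot}^\flat}$ expresses only $\norm{\cdot}^\sharp$-lower semicontinuity of $\norm{\cdot}^\flat$, and your counterexample is valid: on $c_{00}$, the map $\primal\mapsto\norm{\primal}_1=\sup_{F \text{ finite}}\sum_{i\in F}\module{\primal_i}$ is a supremum of $\norm{\cdot}_\infty$-continuous functions, hence $\norm{\cdot}_\infty$-lsc, so $\BALL_{\norm{\cdot}_1}$ is $\norm{\cdot}_\infty$-closed, while no inequality $\norm{\cdot}_1\leq M\norm{\cdot}_\infty$ can hold. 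Thus Proposition~\ref{pr:comparison_of_norms} is false as stated on a bare vector space; statement~\ref{p:quatre} must either be dropped from the list of equivalences or be compensated by completeness of $\bp{\UNCERTAIN,\norm{\cdot}^\sharp}$, exactly as in your Baire-category argument on $\UNCERTAIN=\bigcup_{n}n\,\BALL_{\norm{\cdot}^\flat}$. This matters downstream: the defective direction \ref{p:quatre}~$\Rightarrow$~\ref{p:deux} is precisely the one invoked in Propositions~\ref{pr:triplenorm_local_sets} and~\ref{pr:norm_and_dual_norm_Hilbert} (``the topologies \ldots\ are weaker \ldots\ because the unit balls are closed''), and the closed-plus-bounded criteria of Proposition~\ref{pr:equivalent_norms} inherit the same flaw; in all those places the reference norm is the Hilbertian norm of a Hilbert space, so completeness holds there, and your repaired statement is exactly what is needed to keep the rest of the paper intact.
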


\begin{proof} 
The chain of implications (in both directions) from statements~\ref{p:un} to
\ref{p:quatre}
is easy to prove. So is statement~\ref{p:trois} $\Rightarrow$ 
 statement~\ref{p:cinq}.

Statement~\ref{p:sept} is equivalent to the property that 
    there exists $M>0$ such that 
\( \BALL_{\norm{\cdot}^\sharp} \subset M\BALL_{\norm{\cdot}^\flat} \),
hence to statement~\ref{p:un}.

Statement~\ref{p:six} is equivalent to the property that
    there exists $M>0$ such that $\frac{1}{M}\BALL_{\norm{\cdot}^\sharp} 
\subset \BALL_{\norm{\cdot}^\flat}$, hence is equivalent to statement~\ref{p:sept}.
Indeed, using~\cite[(6.6) p.~114]{Bauschke-Combettes:2017}, we have that
the interior of a set~$D$ is 
\( %    \begin{equation} 
    \mathrm{int}_{\norm{\cdot}^\sharp}D = \defset{ \uncertain\in D}
    { \np{ \exists \rho > 0}\quad \rho \BALL_{\norm{\cdot}^\sharp} 
\subset D - \uncertain} 
\). % \eqfinp %\label{eq:interior}    \end{equation}
With this, we prove that statement~\ref{p:cinq} implies
statement~\ref{p:six} (the reverse is obvious).
Let \( \uncertain\in 
\mathrm{int}_{\norm{\cdot}^\sharp}\BALL_{\norm{\cdot}^\flat}
%If $\uncertain=0$, we are done. 
%When $\uncertain\not=0$
=
\defset{ \uncertain\in \BALL_{\norm{\cdot}^\flat}}%
    { \np{ \exists \rho > 0}\quad \rho\BALL_{\norm{\cdot}^\sharp} 
\subset \BALL_{\norm{\cdot}^\flat} - \uncertain} \), 
there exists $\rho >0$ such that \(\rho
\BALL_{\norm{\cdot}^\sharp} \subset \BALL_{\norm{\cdot}^\flat}  - \uncertain\). 
Now, choosing $\mu = 1/(1+ \norm{\uncertain}^\flat)$, we get that $\mu \rho
\BALL_{\norm{\cdot}^\sharp} \subset \mu\np{\BALL_{\norm{\cdot}^\flat}  -
  \uncertain}\subset \BALL_{\norm{\cdot}^\flat}$,
and thus \( 0 \in \mathrm{int}_{\norm{\cdot}^\sharp}\BALL_{\norm{\cdot}^\flat}
\). 

This ends the proof.
\end{proof}

We easily deduce the following Proposition
(and the definition of equivalent norms).

\begin{proposition}
  Let \( \norm{\cdot}^\sharp \) and 
  \( \norm{\cdot}^\flat \) be two norms on a vector space~$\UNCERTAIN$.
  The following statements are equivalent.
  \begin{enumerate}
  \item 
There exist two positive numbers $m$ and $M$,
  such that 
  \begin{equation}
    0 < m \leq M < +\infty 
    \mtext{ and }
    m \norm{\cdot}^\sharp \leq \norm{\cdot}^\flat \leq M \norm{\cdot}^\sharp 
    \eqfinp
    \label{eq:norm-equiv}
  \end{equation}
\item 
The topologies of \( \norm{\cdot}^\sharp \) and
\( \norm{\cdot}^\flat \) are the same.
\item 
   The unit ball \( \BALL_{\norm{\cdot}^\flat} \) is closed 
    for the topology of \( \norm{\cdot}^\sharp \),
and bounded for the norm~\( \norm{\cdot}^\sharp \).
\item 
   The unit ball \( \BALL_{\norm{\cdot}^\flat} \) is closed 
    for the topology of \( \norm{\cdot}^\sharp \),
and \( 0 \in \mathrm{int}_{\norm{\cdot}^\sharp}\BALL_{\norm{\cdot}^\flat} \). 
\item 
   The unit ball \( \BALL_{\norm{\cdot}^\sharp} \) is closed 
    for the topology of \( \norm{\cdot}^\flat \),
and bounded for the norm~\( \norm{\cdot}^\flat \).
\item 
   The unit ball \( \BALL_{\norm{\cdot}^\sharp} \) is closed 
    for the topology of \( \norm{\cdot}^\flat \),
and \( 0 \in \mathrm{int}_{\norm{\cdot}^\flat}\BALL_{\norm{\cdot}^\sharp} \). 
  \end{enumerate}
In any of these equivalent cases, we say that the norms
\( \norm{\cdot}^\sharp \) and \( \norm{\cdot}^\flat \) 
are \emph{equivalent}.
  \label{pr:equivalent_norms}
\end{proposition}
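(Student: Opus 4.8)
The plan is to reduce this two-sided statement to the one-directional comparison of norms already fully characterized in Proposition~\ref{pr:comparison_of_norms}. The guiding observation is that the equivalence of $\norm{\cdot}^\sharp$ and $\norm{\cdot}^\flat$ is nothing but the conjunction of two one-directional comparisons. Writing $(A)$ for the existence of $M>0$ with $\norm{\cdot}^\flat \leq M\norm{\cdot}^\sharp$, and $(B)$ for the existence of $m'>0$ with $\norm{\cdot}^\sharp \leq m'\norm{\cdot}^\flat$, the inequality~\eqref{eq:norm-equiv} of the first statement is precisely $(A)$ and $(B)$ holding simultaneously (with $m=1/m'$). Thus the whole proposition follows once I characterize $(A)$ and $(B)$ separately and then recognize each remaining statement as the conjunction of one criterion for $(A)$ and one criterion for $(B)$.

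First, I would apply Proposition~\ref{pr:comparison_of_norms} verbatim to the ordered pair $\bp{\norm{\cdot}^\sharp,\norm{\cdot}^\flat}$. This immediately yields that $(A)$ is equivalent to each of: statement~\ref{p:deux}, that the topology of~$\norm{\cdot}^\sharp$ is richer than that of~$\norm{\cdot}^\flat$; statement~\ref{p:quatre}, that $\BALL_{\norm{\cdot}^\flat}$ is closed for the topology of~$\norm{\cdot}^\sharp$; statement~\ref{p:six}, that $0 \in \mathrm{int}_{\norm{\cdot}^\sharp}\BALL_{\norm{\cdot}^\flat}$; and statement~\ref{p:sept}, that $\BALL_{\norm{\cdot}^\sharp}$ is bounded for~$\norm{\cdot}^\flat$.

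Second, I would apply the very same Proposition~\ref{pr:comparison_of_norms} to the swapped pair $\bp{\norm{\cdot}^\flat,\norm{\cdot}^\sharp}$, which simply interchanges the roles of the two norms. This produces the mirror-image characterizations of $(B)$: it is equivalent to the topology of~$\norm{\cdot}^\flat$ being richer than that of~$\norm{\cdot}^\sharp$, to $\BALL_{\norm{\cdot}^\sharp}$ being closed for the topology of~$\norm{\cdot}^\flat$, to $0 \in \mathrm{int}_{\norm{\cdot}^\flat}\BALL_{\norm{\cdot}^\sharp}$, and to $\BALL_{\norm{\cdot}^\flat}$ being bounded for~$\norm{\cdot}^\sharp$.

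With these two lists in hand, the conclusion is bookkeeping: each of the remaining statements is visibly the conjunction of one item from the $(A)$-list and one item from the $(B)$-list, hence equivalent to $(A)\wedge(B)$, that is, to~\eqref{eq:norm-equiv}. For instance, ``the two topologies coincide'' is ``richer than'' holding in both directions, i.e.\ $(A)\wedge(B)$; and ``$\BALL_{\norm{\cdot}^\flat}$ is closed for~$\norm{\cdot}^\sharp$ and bounded for~$\norm{\cdot}^\sharp$'' pairs the closedness criterion for $(A)$ with the boundedness criterion for $(B)$. The main, and essentially only, obstacle is precisely this matching: one must track which ball, in which topology, encodes which direction, since the closedness, interior, and boundedness clauses can each be attached to either ball. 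Feeding the correctly ordered pair into Proposition~\ref{pr:comparison_of_norms} in every clause is the entire content of the argument.
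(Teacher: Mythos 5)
Your overall strategy is exactly the one the paper intends: the paper's entire ``proof'' of Proposition~\ref{pr:equivalent_norms} is the phrase ``we easily deduce'' from Proposition~\ref{pr:comparison_of_norms}, and you carry out that deduction by applying it to the two ordered pairs and taking conjunctions. The reduction does work for statements~1, 2, 3 and~5. But your central bookkeeping claim --- that ``each of the remaining statements is visibly the conjunction of one item from the $(A)$-list and one item from the $(B)$-list'' --- is false for statements~4 and~6, which are precisely the two you never check. In statement~4, \emph{both} clauses concern the ball $\BALL_{\norm{\cdot}^\flat}$ and the topology of~$\norm{\cdot}^\sharp$: the closedness clause is item~\ref{p:quatre}, and the clause $0 \in \mathrm{int}_{\norm{\cdot}^\sharp}\BALL_{\norm{\cdot}^\flat}$ is item~\ref{p:six}, of Proposition~\ref{pr:comparison_of_norms} applied to the \emph{same} ordered pair $\bp{\norm{\cdot}^\sharp,\norm{\cdot}^\flat}$. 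So, using Proposition~\ref{pr:comparison_of_norms} as a black box, statement~4 is equivalent to $(A)$ alone; no $(B)$-item occurs in it, and no re-pairing can produce the missing direction. Symmetrically, both clauses of statement~6 lie in the $(B)$-list, so it collapses to $(B)$ alone.

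This is not a presentational slip that more careful matching could repair; the difficulty is genuine. On the vector space of finitely supported sequences, take $\norm{\cdot}^\sharp$ to be the $\ell^1$~norm and $\norm{\cdot}^\flat$ the supremum norm. Then $\norm{\cdot}^\flat \leq \norm{\cdot}^\sharp$, so $\BALL_{\norm{\cdot}^\flat}$ is closed for the topology of~$\norm{\cdot}^\sharp$ and $0 \in \mathrm{int}_{\norm{\cdot}^\sharp}\BALL_{\norm{\cdot}^\flat}$, i.e.\ statement~4 holds; yet no $m>0$ satisfies $m\norm{\cdot}^\sharp \leq \norm{\cdot}^\flat$, so statement~1 fails. (The same two norms, with the roles exchanged, show that item~\ref{p:quatre} of Proposition~\ref{pr:comparison_of_norms} is strictly weaker than item~\ref{p:un} in this generality, which is the root of the problem and infects the paper's own one-line deduction as well.) Handling statements~4 and~6 therefore needs an ingredient beyond Proposition~\ref{pr:comparison_of_norms} as stated: finite dimension, or completeness together with a Baire category argument showing that a closed, convex, symmetric, absorbing set is a neighborhood of the origin. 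Your proof, like the paper's, supplies neither, and the step ``statement~4 (resp.~6) $\Rightarrow$ statement~1'' is where it breaks.
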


\subsubsection{Dual norm in the dual system case}

Let $\PRIMAL$ and $\DUAL$ be a dual system, as recalled
in~\S\ref{Background_on_sets_and_functions}.
By default, the primal space~$\PRIMAL$ is equipped with the 
weak topology (of pointwise convergence), and the dual space~ $\DUAL$ 
with the weak* topology.
In the paper, we will mostly consider the 
case where $\PRIMAL=\DUAL$ is a Hilbert space, 
and the natural dual system it induces.

We study under which stronger and stronger assumptions
the support function of a set is a norm. 

\begin{proposition}
Let $\PRIMAL$ and $\DUAL$ be a dual system.
\begin{enumerate}
\item 
Let  \( \Primal \subset \PRIMAL \) be symmetric, weakly bounded and with full
conical hull, that is, such that
\begin{equation}
  -\Primal=\Primal 
\eqsepv
\barriercone\Primal = \DUAL 
\eqsepv 
\conicalhull\Primal= \PRIMAL  % \Primal^\perp=\{0\} 
\eqfinp
\label{eq:conditions_on_Primal_support_function=norm}
\end{equation}
Then the support function~\( \sigma_{\Primal} \) is a norm on~$\DUAL$,
whose unit ball is \( \Primal^{\odot} \).
\label{it:conditions_on_Primal_support_function=norm}
\item 
Let \( \Convex \subset \PRIMAL \) be closed, convex and containing~$0$.
  The following statements are equivalent.
\begin{enumerate}
\item 
The support function~\( \sigma_{\Convex} \) is a norm on~$\DUAL$,
whose unit ball is the polar set~\( \Convex^{\odot} \).
\label{it:equivalences_support_function=norm_support_function_set}
\item 
The set \( \Convex \) is 
symmetric, weakly bounded and with full conical hull.
\label{it:equivalences_support_function=norm_set}
\item 
The polar set \( \Convex^{\odot} \) is 
symmetric, weakly bounded and with full conical hull.
\label{it:equivalences_support_function=norm_polar_set}
\item 
The support function~\( \sigma_{\Convex^{\odot}} \) is a norm on~$\PRIMAL$,
whose unit ball is \( \Convex \).
\label{it:equivalences_support_function=norm_support_function_polar_set}
\end{enumerate}
\label{it:equivalences_support_function=norm_support_function_closed}
\end{enumerate}
\label{pr:conditions_on_Primal_support_function=norm}
\end{proposition}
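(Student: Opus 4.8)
The plan is to prove the two items separately: item~\ref{it:conditions_on_Primal_support_function=norm} by directly checking the norm axioms, and then to bootstrap the equivalences of item~\ref{it:equivalences_support_function=norm_support_function_closed} from it through polarity and the bipolar identity.

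For item~\ref{it:conditions_on_Primal_support_function=norm}, I would verify the four axioms for $\sigma_{\Primal}$ on~$\DUAL$. Finiteness is exactly weak boundedness $\barriercone\Primal=\DUAL$ by~\eqref{eq:weakly_bounded}. Positive homogeneity for $\lambda\geq 0$ is immediate from~\eqref{eq:support_function}; combining it with the symmetry $-\Primal=\Primal$ yields both evenness of $\sigma_{\Primal}$ and $\sigma_{\Primal}\np{\lambda\dual}=\module{\lambda}\sigma_{\Primal}\np{\dual}$ for all $\lambda$. Subadditivity holds for every support function, since the supremum of a sum is dominated by the sum of suprema. The only substantial axiom is definiteness: using symmetry, $\sigma_{\Primal}\np{\dual}=\sup_{\primal\in\Primal}\module{\proscal{\primal}{\dual}}$, so $\sigma_{\Primal}\np{\dual}=0$ forces $\proscal{\primal}{\dual}=0$ for all $\primal\in\Primal$; as $\defset{\primal}{\proscal{\primal}{\dual}=0}$ is a subspace containing $\Primal$, it contains $\conicalhull\Primal=\PRIMAL$, whence $\dual=0$ by non-degeneracy of the pairing. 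The unit-ball identity is then tautological: $\sigma_{\Primal}\np{\dual}\leq 1$ reads $\proscal{\primal}{\dual}\leq 1$ for all $\primal\in\Primal$, which is precisely $\dual\in\Primal^{\odot}$ by~\eqref{eq:polar_set}.

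For item~\ref{it:equivalences_support_function=norm_support_function_closed}, the engine is a polarity dictionary coupled with the bipolar identity $\Convex^{\odot\odot}=\Convex$ (standard for a closed convex set containing the origin). From $\barriercone\Convex=\conicalhull\bp{\Convex^{\odot}}$ in~\eqref{eq:barrier_cone=cone_generated_by_polar}, I read that $\Convex$ is weakly bounded if and only if $\Convex^{\odot}$ has full conical hull; applying the same identity to $\Convex^{\odot}$ and using bipolarity, $\Convex^{\odot}$ is weakly bounded if and only if $\Convex$ has full conical hull; and $\bp{-\Convex}^{\odot}=-\bp{\Convex^{\odot}}$ shows, again via bipolarity, that $\Convex$ is symmetric if and only if $\Convex^{\odot}$ is. Hence the triple ``symmetric, weakly bounded, full conical hull'' is invariant under $\Convex\mapsto\Convex^{\odot}$, which is exactly (b)$\Leftrightarrow$(c). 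The implications (b)$\Rightarrow$(a) and (c)$\Rightarrow$(d) are then item~\ref{it:conditions_on_Primal_support_function=norm} applied to $\Convex$ and to $\Convex^{\odot}$ respectively, the unit-ball assertion in the latter case being $\Convex^{\odot\odot}=\Convex$.

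It remains to close the loop with the reverse implications (a)$\Rightarrow$(b) and (d)$\Rightarrow$(c); these have identical form, so I would prove the first and transcribe it. Assuming $\sigma_{\Convex}$ is a norm on~$\DUAL$: finiteness gives $\barriercone\Convex=\DUAL$, that is, $\Convex$ weakly bounded; evenness gives $\sigma_{\Convex}=\sigma_{-\Convex}$, and since a closed convex set is recovered from its support function, $-\Convex=\Convex$; and definiteness, exactly as above, forces $\Convex^{\perp}=\{0\}$. I expect the passage from $\Convex^{\perp}=\{0\}$ to the algebraic statement $\conicalhull\Convex=\linearspan\Convex=\PRIMAL$ to be the main obstacle: non-degeneracy alone only places $\linearspan\Convex$ weakly dense in~$\PRIMAL$, and upgrading density to equality needs a Hahn--Banach separation argument ensuring that a proper subspace carries a nonzero annihilator. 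This is the single step where the structure of the ambient space, beyond mere non-degeneracy of the bilinear pairing, must be invoked (and where the Hilbertian framework of Proposition~\ref{pr:conditions_on_Primal_support_function=norm} does the work).
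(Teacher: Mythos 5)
Most of what you propose is correct and coincides with the paper's own proof. For item~1 you verify the norm axioms exactly as the paper does: finiteness is weak boundedness via~\eqref{eq:weakly_bounded}, evenness and absolute homogeneity come from $-\Primal=\Primal$, subadditivity is automatic for support functions, definiteness holds because $\defset{\primal\in\PRIMAL}{\proscal{\primal}{\dual}=0}$ is a subspace (hence a cone) containing~$\Primal$ and therefore containing $\conicalhull\Primal=\PRIMAL$, and the unit-ball identity is just the definition~\eqref{eq:polar_set}. In item~2, your dictionary --- $\barriercone\Convex=\conicalhull\bp{\Convex^{\odot}}$ from~\eqref{eq:barrier_cone=cone_generated_by_polar}, the bipolar identity $\Convex^{\odot\odot}=\Convex$, and $\np{-\Convex}^{\odot}=-\bp{\Convex^{\odot}}$ --- yields (b)$\Leftrightarrow$(c), and (b)$\Rightarrow$(a), (c)$\Rightarrow$(d) follow from item~1 applied to $\Convex$ and to $\Convex^{\odot}$; this is the paper's route as well.

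The genuine gap is the step you flag at the end: (a)$\Rightarrow$(b) (equivalently (d)$\Rightarrow$(c)) is never proved, and the remedy you anticipate cannot exist. Hahn--Banach separation produces a nonzero annihilator only for a proper \emph{closed} subspace; $\linearspan\Convex=\conicalhull\Convex$ need not be closed; and this proposition carries no Hilbertian hypothesis at all (it is stated for a bare dual system). In fact the implication is false in infinite dimension, even in the Hilbertian dual system $\ell^2$ paired with itself: for $\Convex=\defset{\primal\in\ell^2}{\module{\primal_n}\leq 2^{-n}\eqsepv\forall n}$, which is convex, symmetric, contains~$0$, and is norm bounded and weakly closed, the support function $\sigma_{\Convex}\np{\dual}=\sum_{n}2^{-n}\module{\dual_n}$ is a norm on~$\ell^2$ with unit ball~$\Convex^{\odot}$, so (a) holds; yet $\conicalhull\Convex=\defset{\primal\in\ell^2}{\sup_{n}2^{n}\module{\primal_n}<+\infty}$ is a proper (weakly dense) subspace of~$\ell^2$, so (b) fails. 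You should also know that the paper's own proof commits precisely the inference you distrusted: from $\bp{\linearspan\Convex}^{\perp}=\{0\}$ it concludes ``hence $\linearspan\Convex=\PRIMAL$'' without justification, whereas non-degeneracy only yields weak density. So your diagnosis pinpoints a real flaw rather than a repairable omission: the directions (a)$\Rightarrow$(b) and (d)$\Rightarrow$(c) are valid only when such spans are automatically closed (e.g.\ in finite dimension, as in~$\RR^d$), or after weakening ``full conical hull'' to ``weakly dense span''. The implications you did establish are sound, and they are the ones the rest of the paper actually relies on.
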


\begin{proof}
\begin{enumerate}
\item 
We prove item~\ref{it:conditions_on_Primal_support_function=norm}.

First, as \( \Primal \) is weakly bounded, that is, 
\( \barriercone\Primal = \DUAL \), we have that 
\( \dom\sigma_{\Primal} =\DUAL\) by~\eqref{eq:weakly_bounded},
hence that \( \sigma_{\Primal} < +\infty \).

Second, as \( \Primal \) is symmetric, that is, 
\( -\Primal=\Primal \), we have that 
\( \sigma_{\Primal}\np{\dual}=\sigma_{\Primal}\np{-\dual} \),
for all \( \dual \in \DUAL \).

Third, as \( \Primal \) is symmetric (and nonempty since 
 \( \conicalhull\Primal= \PRIMAL \)),
we deduce that \( 0 \in \convexhull\Primal \), hence that 
\( \sigma_{\Primal}\np{\dual}=
\sigma_{ \convexhull\Primal }\np{\dual}\geq 0\),
for all \( \dual \in \DUAL \).

Fourth, we show that \( \sigma_{\Primal}\np{\dual}=0 
\Rightarrow \dual=0 \).
Indeed, from \( \sigma_{\Primal} \geq 0\), we deduce that 
\( \sigma_{\Primal}\np{\dual}=0 \iff \dual \in \Primal^\perp \).
Now, as \( \Primal \) has full conical hull, that is, 
\( \conicalhull\Primal= \PRIMAL \), we deduce that 
\( \Primal^\perp = \bp{\conicalhull\Primal}^\perp 
= \PRIMAL^\perp =\{0\} \), hence \( \dual=0 \).

Finally, we conclude that \( \sigma_{\Primal} \) is a norm since it is 
subadditive and 1-homogeneous, as it is a support function.

The unit ball of the norm~\( \sigma_{\Primal} \) is 
\( \BALL_{\sigma_{\Primal}}=
\defset{\dual \in \DUAL}{ \sigma_{\Primal}\np{\dual} \leq 1} 
= \Primal^{\odot} \) by definition~\eqref{eq:polar_set} 
of the polar set of~\( \Primal \).

\item 
We prove item~\ref{it:equivalences_support_function=norm_support_function_closed}.

Since the set \( \Convex \) is closed, convex and contains~$0$,
we have \( \Convex^{\odot\odot} = \Convex \) 
\cite[Th.~5.103]{Aliprantis-Border:1999}.

  \begin{itemize}
  \item 

  We prove that 
statement~\ref{it:equivalences_support_function=norm_support_function_set} 
implies
statement~\ref{it:equivalences_support_function=norm_set}.

The set \( \Convex \) is symmetric because 
\( \sigma_{\Convex}\np{\dual}=\sigma_{\Convex}\np{-\dual} \),
for all \( \dual \in \DUAL \), implies that 
\( \sigma_{\Convex}=\sigma_{-\Convex} \), hence that 
\( -\Convex=\Convex \) since \( \Convex \) is closed and convex.
The set \( \Convex \) is weakly bounded because 
\( \sigma_{\Convex} < +\infty \iff \dom\sigma_{\Convex} =\DUAL
\iff \barriercone\Convex = \DUAL \) by~\eqref{eq:weakly_bounded}.
The set \( \Convex \) has full conical hull because 
\( \dual \in \bp{\linearspan\Convex}^\perp=\Convex^\perp
\Rightarrow \sigma_{\Convex}\np{\dual}=0 
\Rightarrow \dual=0 \), hence 
\( \linearspan\Convex=\PRIMAL \); now, as the set \( \Convex \) 
is convex and symmetric, we have that \( \linearspan\Convex=
\conicalhull\Convex \).

\item 
By item~\ref{it:conditions_on_Primal_support_function=norm}, 
statement~\ref{it:equivalences_support_function=norm_set}
implies
statement~\ref{it:equivalences_support_function=norm_support_function_set}.

\item 
  We prove that 
statement~\ref{it:equivalences_support_function=norm_set}
implies
statement~\ref{it:equivalences_support_function=norm_polar_set}.

The conditions~\eqref{eq:conditions_on_Primal_support_function=norm}
give
\begin{subequations}
\begin{align}
    -\np{\Convex^{\odot}}
&=
\np{-\Convex}^{\odot}=\Convex^{\odot}
\eqfinv
\intertext{ by \( -\Convex=\Convex \) and by definition~\eqref{eq:polar_set} 
of the polar set,}
\conicalhull\np{\Convex^{\odot}}
&= 
\barriercone\Convex = \DUAL 
\eqfinv
\intertext{ by~\eqref{eq:barrier_cone=cone_generated_by_polar}
and the assumption that \( \Convex \) weakly bounded,}
\barriercone\np{\Convex^{\odot}}
&=
\conicalhull\np{\Convex^{\odot\odot}}
=\conicalhull\Convex=\PRIMAL
\eqfinv
\end{align}
by \( \Convex^{\odot\odot} = \Convex \) and since
 \( \Convex \) has full conical hull.
\label{eq:conditions_on_Primal_support_function=norm_polar}
\end{subequations}

\item 
Statement~\ref{it:equivalences_support_function=norm_polar_set}
implies
statement~\ref{it:equivalences_support_function=norm_set}.
Indeed, we use the shown property that 
statement~\ref{it:equivalences_support_function=norm_set}
implies
statement~\ref{it:equivalences_support_function=norm_polar_set},
but with \( \Convex^{\odot} \) instead of~\( \Convex \),
where the polar set \( \Convex^{\odot} \) is closed convex
and contains~$0$. Thus, we obtain 
statement~\ref{it:equivalences_support_function=norm_set}
for \( \Convex^{\odot\odot} \), but we have seen that
\( \Convex^{\odot\odot} = \Convex \).

\item 
Because the polar set \( \Convex^{\odot} \) is closed convex and contains~$0$ ,
we deduce that 
statement~\ref{it:equivalences_support_function=norm_polar_set}
is equivalent to
statement~\ref{it:equivalences_support_function=norm_support_function_polar_set}
from the shown property that 
statement~\ref{it:equivalences_support_function=norm_support_function_set} 
is equivalent to
statement~\ref{it:equivalences_support_function=norm_set}.
  \end{itemize}
\end{enumerate}

This ends the proof.
\end{proof}

Now, we define the dual norm.
\begin{definition}
Let $\PRIMAL$ and $\DUAL$ be a dual system.
Let \( \triplenorm{\cdot} \) be a norm on~$\PRIMAL$.
If the support function~$\sigma_{\BALL_{\triplenorm{\cdot}}} $
is a norm (on~$\DUAL$), it is called the \emph{dual norm} 
of~\( \triplenorm{\cdot} \) and it is denoted by~\( \triplenorm{\cdot}_\star \).
  \label{de:DUAL_NORM}
\end{definition}
When a dual norm exists~\( \triplenorm{\cdot}_\star \), then,
by item~\ref{it:conditions_on_Primal_support_function=norm}
in Proposition~\ref{pr:conditions_on_Primal_support_function=norm},
its unit ball is the polar set of the original unit ball:
  \begin{equation}
\BALL_{\triplenorm{\cdot}_\star}
%= \defset{\dual \in \DUAL}{\triplenorm{\dual}_\star \leq 1}
= \BALL_{\triplenorm{\cdot}}^{\odot} 
     \eqfinp
\label{eq:norm_and dual_norm_unit_ball}
  \end{equation}

When both the norm~\( \triplenorm{\cdot} \) 
and the dual norm~\( \triplenorm{\cdot}_\star \) admit a dual norm,
the norm~\( \triplenorm{\cdot}_{\star\star} = 
\bp{\triplenorm{\cdot}_\star}_\star \) (on~$\PRIMAL$)
is called the \emph{bidual norm}.
We provide a characterization of when a dual norm exists,
and of when a bidual norm exists and coincides with the original norm.

\begin{proposition}
Let $\PRIMAL$ and $\DUAL$ be a dual system.
Let \( \triplenorm{\cdot} \) be a norm on~$\PRIMAL$.
\begin{enumerate}
\item 
  The following statements are equivalent.
  \begin{enumerate}
  \item 
The norm~\( \triplenorm{\cdot} \) admits a dual norm.
  \label{it:norm_and_dual_norm_dual_system_dual_norm_un}
\item 
The unit ball~$\BALL_{\triplenorm{\cdot}}$ is weakly bounded.
  \label{it:norm_and_dual_norm_dual_system_dual_norm_deux}
  \end{enumerate}
  \label{it:norm_and_dual_norm_dual_system_dual_norm}
\item 
  The following statements are equivalent.
  \begin{enumerate}
  \item 
The norm \( \triplenorm{\cdot} \) admits 
a dual norm~\( \triplenorm{\cdot}_\star \),
and the dual norm~\( \triplenorm{\cdot}_\star \)
has \( \triplenorm{\cdot} \) for  dual norm
(\( \triplenorm{\cdot}_{\star\star} =\triplenorm{\cdot} \)). 
  \label{it:norm_and_dual_norm_dual_system_bidual_norm_un}
\item 
The unit ball~$\BALL_{\triplenorm{\cdot}}$ is weakly bounded and closed.
  \label{it:norm_and_dual_norm_dual_system_bidual_norm_deux}
  \end{enumerate}
  \begin{subequations}
    In that case, each norm is the dual norm of the other norm,
  the unit balls
  \( \BALL_{\triplenorm{\cdot}} \) %in~\eqref{eq:TRIPLENORM_UNIT_BALL}
  and
  \( \BALL_{\triplenorm{\cdot}_\star} \) %in~\eqref{eq:DUAL_TRIPLENORM_UNIT_BALL}
  are polar to each other, that is, 
  \begin{equation}
    \BALL_{\triplenorm{\cdot}} = \BALL_{\triplenorm{\cdot}_\star}^{\odot} 
    \mtext{ and }
    \BALL_{\triplenorm{\cdot}_\star}= \BALL_{\triplenorm{\cdot}}^{\odot} 
    \eqfinv
    \label{eq:norm_dual_norm_polar_balls_Hilbert}
  \end{equation}
  and their support functions satisfy 
  \begin{equation}
    \triplenorm{\cdot} = \sigma_{ \BALL_{\triplenorm{\cdot}_{\star}}}
    \mtext{ and }
    \triplenorm{\cdot}_\star = \sigma_{\BALL_{\triplenorm{\cdot}}} 
    \eqfinp 
    \label{eq:norm_dual_norm=support_of_polar_balls_Hilbert}
  \end{equation}
  \end{subequations}
  \label{it:norm_and_dual_norm_dual_system_bidual_norm}
\end{enumerate}
  \label{pr:norm_and_dual_norm_dual_system}
\end{proposition}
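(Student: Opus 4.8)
The plan is to build everything on Definition~\ref{de:DUAL_NORM}, which identifies the dual norm with the support function $\sigma_{\BALL_{\triplenorm{\cdot}}}$ whenever the latter happens to be a norm, together with the characterization in Proposition~\ref{pr:conditions_on_Primal_support_function=norm} of when a support function is a norm, the unit-ball identity~\eqref{eq:norm_and dual_norm_unit_ball}, and two elementary facts recalled earlier: every unit ball is convex, symmetric and with full conical hull, and every polar set is closed (being an intersection of closed half-spaces). With these in hand, neither direction requires any genuine computation.

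For the first item, I would argue both implications directly. For \ref{it:norm_and_dual_norm_dual_system_dual_norm_deux}~$\Rightarrow$~\ref{it:norm_and_dual_norm_dual_system_dual_norm_un}: the unit ball $\BALL_{\triplenorm{\cdot}}$ is symmetric and with full conical hull, and is weakly bounded by assumption, so item~\ref{it:conditions_on_Primal_support_function=norm} of Proposition~\ref{pr:conditions_on_Primal_support_function=norm} makes $\sigma_{\BALL_{\triplenorm{\cdot}}}$ a norm, which by Definition~\ref{de:DUAL_NORM} is exactly the dual norm. For the converse \ref{it:norm_and_dual_norm_dual_system_dual_norm_un}~$\Rightarrow$~\ref{it:norm_and_dual_norm_dual_system_dual_norm_deux}: a norm is finite-valued, so $\dom\sigma_{\BALL_{\triplenorm{\cdot}}}=\DUAL$, which by~\eqref{eq:weakly_bounded} says precisely that $\BALL_{\triplenorm{\cdot}}$ is weakly bounded.

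For the second item, assuming \ref{it:norm_and_dual_norm_dual_system_bidual_norm_deux} I would set $\Convex=\BALL_{\triplenorm{\cdot}}$, which is closed (by hypothesis), convex and contains~$0$, so item~\ref{it:equivalences_support_function=norm_support_function_closed} of Proposition~\ref{pr:conditions_on_Primal_support_function=norm} applies. Weak boundedness and the first item give that $\sigma_{\Convex}$ is a norm with unit ball $\Convex^{\odot}$ (statement~\ref{it:equivalences_support_function=norm_support_function_set} there), hence all four equivalent statements hold; in particular statement~\ref{it:equivalences_support_function=norm_support_function_polar_set} gives that $\sigma_{\Convex^{\odot}}$ is a norm with unit ball $\Convex$. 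Since $\BALL_{\triplenorm{\cdot}_\star}=\Convex^{\odot}$ by~\eqref{eq:norm_and dual_norm_unit_ball}, Definition~\ref{de:DUAL_NORM} yields $\triplenorm{\cdot}_{\star\star}=\sigma_{\Convex^{\odot}}$, a norm whose unit ball is $\Convex=\BALL_{\triplenorm{\cdot}}$; as two norms sharing a unit ball coincide, $\triplenorm{\cdot}_{\star\star}=\triplenorm{\cdot}$. Conversely, assuming \ref{it:norm_and_dual_norm_dual_system_bidual_norm_un}, the first item gives that $\BALL_{\triplenorm{\cdot}}$ is weakly bounded, while applying~\eqref{eq:norm_and dual_norm_unit_ball} to the pair $\bp{\triplenorm{\cdot}_\star,\triplenorm{\cdot}_{\star\star}}$ gives $\BALL_{\triplenorm{\cdot}}=\BALL_{\triplenorm{\cdot}_{\star\star}}=\BALL_{\triplenorm{\cdot}_\star}^{\odot}$, a polar set and therefore closed. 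The displayed identities~\eqref{eq:norm_dual_norm_polar_balls_Hilbert} then read off from $\BALL_{\triplenorm{\cdot}}=\BALL_{\triplenorm{\cdot}_\star}^{\odot}$ and~\eqref{eq:norm_and dual_norm_unit_ball}, and~\eqref{eq:norm_dual_norm=support_of_polar_balls_Hilbert} from Definition~\ref{de:DUAL_NORM} applied to $\triplenorm{\cdot}$ and $\triplenorm{\cdot}_\star$.

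The part I expect to require the most care is the equivalence in the second item, where the exact role of closedness must be pinned down: the extra demand $\triplenorm{\cdot}_{\star\star}=\triplenorm{\cdot}$ is equivalent to $\BALL_{\triplenorm{\cdot}}$ being closed, because the bidual unit ball is forced to be a polar set (hence closed) in one direction, and because the bipolar identity $\Convex^{\odot\odot}=\Convex$ underlying statement~\ref{it:equivalences_support_function=norm_support_function_polar_set} of Proposition~\ref{pr:conditions_on_Primal_support_function=norm} is available only for closed convex sets containing the origin in the other direction. Everything else is a bookkeeping exercise in transporting the equivalences of Proposition~\ref{pr:conditions_on_Primal_support_function=norm} through Definition~\ref{de:DUAL_NORM}.
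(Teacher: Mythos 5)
Your proof is correct and follows essentially the same route as the paper's: item~1 is proved identically (finiteness of the support function versus weak boundedness via~\eqref{eq:weakly_bounded}, plus item~\ref{it:conditions_on_Primal_support_function=norm} of Proposition~\ref{pr:conditions_on_Primal_support_function=norm}), and item~2 is deduced from item~\ref{it:equivalences_support_function=norm_support_function_closed} of Proposition~\ref{pr:conditions_on_Primal_support_function=norm} applied to \( \Convex=\BALL_{\triplenorm{\cdot}} \). You are in fact slightly more explicit than the paper on the direction \ref{it:norm_and_dual_norm_dual_system_bidual_norm_un}~\(\Rightarrow\)~\ref{it:norm_and_dual_norm_dual_system_bidual_norm_deux}, where you obtain closedness of \( \BALL_{\triplenorm{\cdot}} \) by identifying it with the polar set \( \BALL_{\triplenorm{\cdot}_\star}^{\odot} \), a step the paper leaves implicit.
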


\begin{proof}

  \begin{enumerate}
\item 
We prove item~\ref{it:norm_and_dual_norm_dual_system_dual_norm}.

\begin{itemize}
\item 
We prove that 
statement~\ref{it:norm_and_dual_norm_dual_system_dual_norm_un}
implies
statement~\ref{it:norm_and_dual_norm_dual_system_dual_norm_deux}.

Indeed, if the norm~\( \triplenorm{\cdot} \) admits a dual norm, 
the support function~$\sigma_{\BALL_{\triplenorm{\cdot}}} $ satisfies
\( \sigma_{\BALL_{\triplenorm{\cdot}}} < +\infty \). Therefore 
\( \dom\sigma_{\BALL_{\triplenorm{\cdot}}} =\DUAL \), 
meaning that the unit ball~$\BALL_{\triplenorm{\cdot}}$ is weakly bounded 
by~\eqref{eq:weakly_bounded}.

\item 
We prove that 
statement~\ref{it:norm_and_dual_norm_dual_system_dual_norm_deux}
implies
statement~\ref{it:norm_and_dual_norm_dual_system_dual_norm_un}

Indeed,  being a unit ball, \( \BALL_{\triplenorm{\cdot}} \) is 
convex, symmetric and with full conical hull.
Moreover, it is also weakly bounded by assumption.
We deduce from item~\ref{it:conditions_on_Primal_support_function=norm}
in Proposition~\ref{pr:conditions_on_Primal_support_function=norm}
that the support function~\( \sigma_{\BALL_{\triplenorm{\cdot}}} \) is a norm on~$\DUAL$,
whose unit ball is the polar set~\( \BALL_{\triplenorm{\cdot}}^{\odot} \).
\end{itemize}

\item 
Item~\ref{it:norm_and_dual_norm_dual_system_bidual_norm}
is a straightforward consequence of
item~\ref{it:equivalences_support_function=norm_support_function_closed}
in Proposition~\ref{pr:conditions_on_Primal_support_function=norm}
with \( \Convex= \BALL_{\triplenorm{\cdot}} \).
Indeed, being a unit ball, \( \BALL_{\triplenorm{\cdot}} \) is 
convex, containing~$0$, symmetric and with full conical hull.
Moreover, it is also closed and weakly bounded by assumption.

The equations 
\eqref{eq:norm_dual_norm_polar_balls_Hilbert}--\eqref{eq:norm_dual_norm=support_of_polar_balls_Hilbert}
are also a straightforward consequence of
item~\ref{it:equivalences_support_function=norm_support_function_closed}
in Proposition~\ref{pr:conditions_on_Primal_support_function=norm}.
  \end{enumerate}

This ends the proof. 
\end{proof}

\subsubsection{Dual norm in the Hilbert space case}

Let $\PRIMAL=\DUAL$ be a Hilbert space 
with scalar product \( \proscal{}{} \), 
and induced \emph{Hilbertian norm}
\( \norm{\cdot} = \sqrt{ \proscal{\cdot}{\cdot} } \)
and Hilbertian topology.
It is easy to see that the dual norm~$\norm{\cdot}_\star$ of the Hilbertian norm
is the Hilbertian norm, that is, $\norm{\cdot}_\star=\norm{\cdot}$.

When we refer to notions attached to a dual system
(support function, weakly bounded set), 
by default it is the natural dual system induced by the 
Hilbertian structure.

We study under which assumptions
the support function of a set is a norm,
and the topology that it induces. 

\begin{proposition}
Let  \( \Convex \subset \PRIMAL \) be closed, convex, 
symmetric, weakly bounded and with full conical hull
($\conicalhull\Convex= \PRIMAL$).
Then,
\begin{itemize}
\item 
the support function~\( \sigma_{\Convex} \) is a norm on~$\DUAL$,
whose unit ball is the polar set~\( \Convex^{\odot} \), 
and \( \Convex^{\odot} \) is 
closed, convex, 
symmetric, weakly bounded and with full conical hull,
\item 
the support function~\( \sigma_{\Convex^{\odot}} \) is a norm on~$\PRIMAL$,
whose unit ball is \( \Convex \),
\item 
each norm is the dual norm of the other norm,
\item 
the topologies induced by both norms 
are both weaker than the Hilbertian topology.
\end{itemize}
The assertions remain true with ``weakly bounded'' replaced by ``bounded''
in the two instances where it appears. In that case, 
the topologies induced by both norms 
are equivalent to the Hilbertian topology.
  \label{pr:norm_and_dual_norm_Hilbert}
\end{proposition}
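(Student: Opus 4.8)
The plan is to derive the statement almost entirely from the dual-system results already in hand, specializing them to the self-dual pairing $\proscal{\cdot}{\cdot}$ of the Hilbert space $\PRIMAL=\DUAL$. First I would note that, being convex and symmetric with full conical hull, $\Convex$ is nonempty and contains~$0$, so the hypotheses of item~\ref{it:equivalences_support_function=norm_support_function_closed} of Proposition~\ref{pr:conditions_on_Primal_support_function=norm} are met, with $\Convex$ realizing statement~\ref{it:equivalences_support_function=norm_set}. Reading off the equivalences of that item gives the first two bullets at once: statement~\ref{it:equivalences_support_function=norm_support_function_set} yields that $\sigma_{\Convex}$ is a norm on~$\DUAL$ with unit ball $\Convex^{\odot}$; statement~\ref{it:equivalences_support_function=norm_polar_set} yields that $\Convex^{\odot}$ is symmetric, weakly bounded and with full conical hull (it is moreover closed and convex, being a polar set); and statement~\ref{it:equivalences_support_function=norm_support_function_polar_set} yields that $\sigma_{\Convex^{\odot}}$ is a norm on~$\PRIMAL$ with unit ball $\Convex$.

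For the third bullet I would apply item~\ref{it:norm_and_dual_norm_dual_system_bidual_norm} of Proposition~\ref{pr:norm_and_dual_norm_dual_system} to the norm $\triplenorm{\cdot}=\sigma_{\Convex^{\odot}}$, whose unit ball $\Convex$ is weakly bounded and closed by hypothesis. The implication statement~\ref{it:norm_and_dual_norm_dual_system_bidual_norm_deux}~$\Rightarrow$~statement~\ref{it:norm_and_dual_norm_dual_system_bidual_norm_un} then shows that $\triplenorm{\cdot}$ admits a dual norm, namely $\sigma_{\BALL_{\triplenorm{\cdot}}}=\sigma_{\Convex}$ by Definition~\ref{de:DUAL_NORM}, and that its bidual is $\triplenorm{\cdot}$ itself; via the bipolar identity $\Convex^{\odot\odot}=\Convex$ \cite[Th.~5.103]{Aliprantis-Border:1999} this says exactly that $\sigma_{\Convex}$ and $\sigma_{\Convex^{\odot}}$ are dual to one another.

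The fourth bullet follows from Proposition~\ref{pr:comparison_of_norms}, exactly as in the last lines of the proof of Proposition~\ref{pr:triplenorm_local_sets}: taking the Hilbertian norm as the reference, a norm whose unit ball is Hilbert-closed induces a topology weaker than the Hilbertian one. This applies to $\sigma_{\Convex^{\odot}}$, whose unit ball $\Convex$ is closed by hypothesis, and to $\sigma_{\Convex}$, whose unit ball $\Convex^{\odot}$ is closed as a polar set.

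It remains to treat the bounded variant and to promote ``weaker'' to ``equivalent''. Since bounded sets in a Hilbert space are weakly bounded by~\eqref{eq:barrier_cone_of_a_bounded_set}, the bounded hypothesis on $\Convex$ implies the weakly bounded one, so everything above still holds. For the equivalence of topologies, I would argue that $\Convex$, being Hilbert-closed and now Hilbert-bounded, is the unit ball of a norm $\sigma_{\Convex^{\odot}}$ equivalent to the Hilbertian norm (Proposition~\ref{pr:equivalent_norms}); the dual norm $\sigma_{\Convex}$ of an equivalent norm is again equivalent to the Hilbertian norm, so its unit ball $\Convex^{\odot}$ is bounded and both induced topologies coincide with the Hilbertian one. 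The only genuinely analytic point --- and the step I expect to be the main obstacle --- is this boundedness of $\Convex^{\odot}$, equivalently the fact that $0\in\interior\Convex$; it does not follow from closedness and full conical hull by formal manipulation, but rests on the completeness of the Hilbert space (Baire category applied to $\bigcup_{n\geq 1}n\Convex=\linearspan\Convex=\PRIMAL$), which is precisely what is packaged inside the equivalent-norm machinery of Proposition~\ref{pr:comparison_of_norms} and Proposition~\ref{pr:equivalent_norms} that I am invoking.
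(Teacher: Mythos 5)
Your proof is correct and follows essentially the same route as the paper's: the first three bullets rest on the same dual-system machinery (you invoke Proposition~\ref{pr:conditions_on_Primal_support_function=norm} directly, of which Proposition~\ref{pr:norm_and_dual_norm_dual_system}, cited by the paper, is itself a direct consequence), the fourth bullet on Proposition~\ref{pr:comparison_of_norms} via Hilbert-closedness of the two unit balls, and the bounded variant on reducing everything to $0\in\interior\Convex$ obtained from closedness through Proposition~\ref{pr:comparison_of_norms}, exactly as the paper does. The only cosmetic difference is that you compress the paper's explicit polar-inclusion computation ($m\BALL_{\norm{\cdot}}\subset\Convex \Rightarrow \Convex^{\odot}\subset\frac{1}{m}\BALL_{\norm{\cdot}}$) into the assertion that the dual norm of a norm equivalent to the Hilbertian norm is again equivalent to it, which is the same two-line argument.
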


\begin{proof}
Being convex, the set~\( \Convex \) is closed in the weak topology,

By Proposition~\ref{pr:norm_and_dual_norm_dual_system},
the three first items hold true.
We use the property that 
the set~\( \Convex \) is closed in the weak topology,
and that the set~\( \Convex^{\odot} \) is closed in the weak topology,
hence is closed because it is convex (being a polar set).
\medskip

Regarding the fourth item, 
the topologies defined by the norm and by the dual norm
are both weaker (contain less open sets) 
than the Hilbertian topology, because, by construction, their
unit balls are closed (for the Hilbertian topology).
This results from Proposition~\ref{pr:comparison_of_norms}.
\medskip

If all the assumptions on \( \Convex \subset \PRIMAL \) are true, except for
weakly bounded replaced by bounded, then the three first items hold true 
because the bounded subset~\( \Convex \) is weakly bounded,
as seen in~\eqref{eq:barrier_cone_of_a_bounded_set}.
There remains to prove that the polar set~\( \Convex^{\odot} \) is bounded.
For this purpose, we denote by~\(\triplenorm{\cdot}\)
the norm \( \sigma_{\Convex} \) and we get 
    \begin{align*}
\BALL_{\triplenorm{\cdot}}=\Convex \mtext{ is closed}
&\Rightarrow 
0 \in \mathrm{int}\Convex=\mathrm{int}\BALL_{\triplenorm{\cdot}}
\tag{ by Proposition~\ref{pr:comparison_of_norms} }
\\
& \iff 
\exists m>0 \eqsepv
  m \BALL_{\norm{\cdot}} \subset \Convex 
\\
&\Rightarrow 
\Convex^{\odot} \subset \bp{m \BALL_{\norm{\cdot}}}^{\odot}
= \frac{1}{m}  \BALL_{\norm{\cdot}_\star}
\tag{ by~\eqref{eq:norm_and dual_norm_unit_ball} 
and the definition~\eqref{eq:polar_set} of a polar set }
\\
&\Rightarrow 
\Convex^{\odot} \subset \frac{1}{m}  \BALL_{\norm{\cdot}}
\tag{ because the dual norm~$\norm{\cdot}_\star$ of the Hilbertian norm
is the Hilbertian norm}
\\
&\Rightarrow 
\Convex^{\odot} \mtext{ is bounded.}
    \end{align*}
We conclude that the topologies induced by both norms 
are equivalent to the Hilbertian topology,
by Proposition~\ref{pr:equivalent_norms} 
because their balls are closed and bounded.

This ends the proof. 
\end{proof}

We provide assumptions under which a dual norm exists,
and we precise the topologies that norm and dual norm induce.

\begin{proposition}
  Let \( \triplenorm{\cdot} \) be a norm on a Hilbert space.
If the norm~\( \triplenorm{\cdot} \) 
is equivalent to the Hilbertian norm~\( \norm{\cdot} \) 
(or, equivalently,
if its unit ball \( \BALL_{\triplenorm{\cdot}} \) is bounded and closed), 
  then the norm \( \triplenorm{\cdot} \) 
  admits a dual norm~\( \triplenorm{\cdot}_\star \)
which is equivalent to the Hilbertian norm~\( \norm{\cdot} \) 
(or, equivalently 
whose unit ball~\( \BALL_{\triplenorm{\cdot}_\star} \) is 
bounded and closed), and 
\eqref{eq:norm_dual_norm_polar_balls_Hilbert}--\eqref{eq:norm_dual_norm=support_of_polar_balls_Hilbert}
hold true. 
\label{pr:dual_norm_Hilbert}
\end{proposition}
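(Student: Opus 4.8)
The plan is to deduce the statement as a direct corollary of Proposition~\ref{pr:norm_and_dual_norm_Hilbert}, applied to the closed convex set $\Convex=\BALL_{\triplenorm{\cdot}}$. First I would dispose of the parenthetical equivalence: by Proposition~\ref{pr:equivalent_norms}, the norm $\triplenorm{\cdot}$ is equivalent to the Hilbertian norm $\norm{\cdot}$ if and only if its unit ball $\BALL_{\triplenorm{\cdot}}$ is both closed for the Hilbertian topology and bounded for the Hilbertian norm; the same criterion will govern the dual norm once it is shown to exist. Thus it suffices to work under the assumption that $\BALL_{\triplenorm{\cdot}}$ is bounded and closed.

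Next I would check that $\Convex=\BALL_{\triplenorm{\cdot}}$ satisfies all the hypotheses of the \emph{bounded} version of Proposition~\ref{pr:norm_and_dual_norm_Hilbert}. Being a unit ball, $\Convex$ is convex, contains~$0$, is symmetric ($-\Convex=\Convex$) and has full conical hull ($\conicalhull\Convex=\PRIMAL$), as recorded right after~\eqref{eq:NORM_UNIT_BALL}. By the standing assumption, $\Convex$ is closed and bounded, and a bounded subset of a Hilbert space is weakly bounded by~\eqref{eq:barrier_cone_of_a_bounded_set}. Hence every hypothesis of Proposition~\ref{pr:norm_and_dual_norm_Hilbert} holds, with \emph{bounded} in place of \emph{weakly bounded}.

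Applying Proposition~\ref{pr:norm_and_dual_norm_Hilbert} then yields that the support function $\sigma_{\Convex}=\sigma_{\BALL_{\triplenorm{\cdot}}}$ is a norm on~$\DUAL$ whose unit ball is the polar set $\Convex^{\odot}=\BALL_{\triplenorm{\cdot}}^{\odot}$; by Definition~\ref{de:DUAL_NORM} this is exactly the dual norm $\triplenorm{\cdot}_\star$, so the dual norm exists. The same proposition, in its bounded form, guarantees that the polar set $\Convex^{\odot}=\BALL_{\triplenorm{\cdot}_\star}$ is again bounded and closed, whence by the equivalence already established the dual norm $\triplenorm{\cdot}_\star$ is equivalent to the Hilbertian norm. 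Finally, the identities \eqref{eq:norm_dual_norm_polar_balls_Hilbert}--\eqref{eq:norm_dual_norm=support_of_polar_balls_Hilbert} are delivered by Proposition~\ref{pr:norm_and_dual_norm_Hilbert} (equivalently, by item~\ref{it:norm_and_dual_norm_dual_system_bidual_norm} of Proposition~\ref{pr:norm_and_dual_norm_dual_system}, whose requirement of a weakly bounded and closed unit ball is met here).

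Since the whole argument is a bookkeeping instantiation of the preceding propositions, there is no genuine obstacle; the one point requiring care is to invoke the \emph{bounded} variant of Proposition~\ref{pr:norm_and_dual_norm_Hilbert} rather than the \emph{weakly bounded} one, so that the dual unit ball $\BALL_{\triplenorm{\cdot}_\star}$ inherits boundedness (and not merely weak boundedness) and the dual norm comes out equivalent to the Hilbertian norm as claimed.
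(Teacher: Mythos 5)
Your proposal is correct and follows essentially the same route as the paper's own proof: translate the norm equivalence into closedness and boundedness of the unit ball via Proposition~\ref{pr:equivalent_norms}, then apply the bounded variant of Proposition~\ref{pr:norm_and_dual_norm_Hilbert} with \( \Convex=\BALL_{\triplenorm{\cdot}} \) to obtain the dual norm, its equivalence to the Hilbertian norm, and the identities \eqref{eq:norm_dual_norm_polar_balls_Hilbert}--\eqref{eq:norm_dual_norm=support_of_polar_balls_Hilbert}. Your extra care in verifying the hypotheses (symmetry, full conical hull, weak boundedness from boundedness) and in flagging the bounded-versus-weakly-bounded distinction is exactly the bookkeeping the paper leaves implicit.
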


\begin{proof}
If the norm~\( \triplenorm{\cdot} \) 
  is equivalent to the Hilbertian norm~\( \norm{\cdot} \),
then the unit ball~\( \BALL_{\triplenorm{\cdot}} \) is closed and bounded
by Proposition~\ref{pr:equivalent_norms}.
Therefore, by Proposition~\ref{pr:norm_and_dual_norm_Hilbert}
with \( \Convex = \BALL_{\triplenorm{\cdot}} \), 
we deduce that the norm \( \triplenorm{\cdot} \) 
  admits a dual norm~\( \triplenorm{\cdot}_\star \),
that the topologies induced by both norms 
are equivalent to the Hilbertian topology,
and that \eqref{eq:norm_dual_norm_polar_balls_Hilbert}--\eqref{eq:norm_dual_norm=support_of_polar_balls_Hilbert}
hold true. 

  This ends the proof.
\end{proof}

\subsubsection{Dual norm in the Euclidian case}

\begin{proposition}
Any norm on~$\RR^d$ admits a dual norm.
   \label{pr:DUAL_NORM_Euclidian}  
\end{proposition}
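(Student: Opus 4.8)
The plan is to reduce the claim to Proposition~\ref{pr:dual_norm_Hilbert}. Since $\RR^d$, equipped with its Euclidian scalar product $\proscal{}{}$ and Euclidian norm $\norm{\cdot}$, is a (finite-dimensional) Hilbert space, that Proposition asserts that any norm \emph{equivalent to the Hilbertian norm} admits a dual norm. Hence it suffices to prove the classical fact that \emph{every} norm $\triplenorm{\cdot}$ on~$\RR^d$ is equivalent to the Euclidian norm~$\norm{\cdot}$, in the sense of~\eqref{eq:norm-equiv}.

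First, I would establish the upper bound. Expanding $\primal = \sum_{i=1}^d \primal_i e_i$ in the canonical basis $\np{e_i}_{i=1}^d$ of~$\RR^d$, subadditivity and homogeneity of the norm~$\triplenorm{\cdot}$ give $\triplenorm{\primal} \leq \sum_{i=1}^d \module{\primal_i} \triplenorm{e_i} \leq M \norm{\primal}$, where $M = \sqrt{\sum_{i=1}^d \triplenorm{e_i}^2} \in ]0,+\infty[$ by the Cauchy--Schwarz inequality. A useful consequence is the reverse triangle inequality $\module{\triplenorm{\primal} - \triplenorm{\primal'}} \leq \triplenorm{\primal - \primal'} \leq M \norm{\primal - \primal'}$, which shows that $\triplenorm{\cdot}$ is continuous for the Euclidian topology.

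Second, I would obtain the lower bound by a compactness argument. The Euclidian unit sphere $\SPHERE$ of~\eqref{eq:Euclidian_SPHERE} is closed and bounded, hence compact by the Heine--Borel theorem. As $\triplenorm{\cdot}$ is continuous for the Euclidian topology, it attains its infimum $m = \min_{\primal \in \SPHERE} \triplenorm{\primal}$ over the compact set~$\SPHERE$; moreover $m > 0$, since $m = 0$ would provide some $\primal \in \SPHERE$ with $\triplenorm{\primal} = 0$, hence $\primal = 0 \notin \SPHERE$, a contradiction. By homogeneity, $m \norm{\primal} \leq \triplenorm{\primal}$ for all $\primal \in \RR^d$. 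Combined with the upper bound, this yields $m \norm{\cdot} \leq \triplenorm{\cdot} \leq M \norm{\cdot}$ with $0 < m \leq M < +\infty$, that is, equivalence~\eqref{eq:norm-equiv}.

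With equivalence established, Proposition~\ref{pr:dual_norm_Hilbert} applies and delivers the dual norm $\triplenorm{\cdot}_\star$. The only genuinely delicate step is the compactness of the unit sphere, which relies essentially on the finite dimension of~$\RR^d$ through the Heine--Borel theorem; this is exactly the feature absent in a general Hilbert space, where a norm need not be equivalent to the Hilbertian norm and may therefore fail to admit a dual norm.
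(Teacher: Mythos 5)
Your proof is correct and follows the same route as the paper: both reduce the statement to Proposition~\ref{pr:dual_norm_Hilbert} via the classical fact that every norm on~$\RR^d$ is equivalent to the Euclidian norm. The only difference is that the paper invokes this equivalence without proof, whereas you supply the standard argument for it (Cauchy--Schwarz for the upper bound, compactness of the Euclidian unit sphere for the lower bound), which is a sound and complete filling-in of that step.
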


\begin{proof}
We use Proposition~\ref{pr:dual_norm_Hilbert}, as all norms on~$\RR^d$
are equivalent to the Euclidian norm. 
\end{proof}

\newcommand{\noopsort}[1]{} \ifx\undefined\allcaps\def\allcaps#1{#1}\fi

\end{document}